\documentclass[a4paper,11pt,reqno]{amsart}
\pdfoutput=1
\usepackage[a4paper,margin=2.4cm]{geometry}
\usepackage[backref=page]{hyperref}
\usepackage{comment}
\usepackage{hyperref}
\usepackage{amssymb}
\usepackage{amsmath}
\usepackage{mathrsfs}
\usepackage{tikz-cd}
\usepackage{stmaryrd}

\usepackage{todonotes} 
\usepackage{xcolor,hyperref}
\definecolor{darkred}{rgb}{.8,0,0}
\definecolor{tocolor}{rgb}{.1,.1,.1}
\definecolor{urlcolor}{rgb}{.2,.2,.6}
\definecolor{linkcolor}{rgb}{.1,.1,.5}
\definecolor{citecolor}{rgb}{.4,.2,.1}
\definecolor{gray}{rgb}{.8,.8,.8}

\hypersetup{
	colorlinks=true,
	urlcolor=urlcolor,
	linkcolor=linkcolor,
	citecolor=citecolor,
	pdfauthor = {Vestislav Apostolov, Brent Pym and Jeffrey Streets},
	pdftitle = {Poisson K-stability and cscGK structures}
	}

\AtEndDocument{\vfill\eject\batchmode}

\newtheorem{iTheorem}{Theorem}
\newtheorem{iProp}{Proposition}
\newtheorem{iCor}{Corollary}
\newtheorem{iConj}{Conjecture}
\newtheorem{Theorem}{Theorem}[section]
\newtheorem{conjecture}[Theorem]{Conjecture}
\newtheorem{lemma}[Theorem]{Lemma}
\newtheorem{prop}[Theorem]{Proposition}
\newtheorem{cor}[Theorem]{Corollary}

\theoremstyle{definition}
\newtheorem{defn}[Theorem]{Definition}

\theoremstyle{remark}
\newtheorem{rem}[Theorem]{Remark}

\newtheorem{ex}[Theorem]{Example}

\newcounter{numl}
\newcommand{\labelnuml}{\textup{(\roman{numl})}}

\DeclareSymbolFont{script}{U}{eus}{m}{n}
\DeclareSymbolFontAlphabet{\amathscr}{script}
\DeclareMathSymbol{\Wedge}{0}{script}{"5E}
\DeclareMathAlphabet{\mathrmsl}{OT1}{cmr}{m}{sl}

\newcommand{\GK}{\mathcal{GK}}
\newcommand{\AGK}{\mathcal{AGK}}
\newcommand{\K}{\mathcal{K}}

\newcommand{\Ham}{{\rm Ham}}
\newcommand{\Alb}{{\rm Alb}}
\renewcommand{\bar}[1]{\overline{#1}}

\newcommand{\Goto}{{\rm Goto}}
\newcommand{\Gscal}{\mathrm{Gscal}}
\newcommand{\gD}{\Delta}
\newcommand{\IP}[1]{\left<#1 \right>}

\renewcommand{\geq}{\geqslant}
\renewcommand{\leq}{\leqslant}

\newcommand{\R}{{\mathbb R}}
\newcommand{\C}{{\mathbb C}}
\newcommand{\N}{{\mathbb N}}
\newcommand{\Q}{{\mathbb Q}}
\newcommand{\T}{{\mathbb T}}

\newcommand{\G}{{G}}

\newcommand{\PP}{{\mathbb P}}

\newcommand{\cO}{{\mathcal O}}

\newcommand{\tstL}{{\mathscr L}}
\newcommand{\tstA}{{\mathscr A}}

\newcommand{\tstX}{{\mathscr X}}
\newcommand{\tstF}{{\mathscr F}}

\newcommand{\Futc}{\mathrm{Fut}}
\newcommand{\DF}{\mathrm{DF}}
\newcommand{\poiss}{\sigma}
\newcommand{\Poiss}{\Sigma}

\newcommand{\tr}{{\rm tr}}

\newcommand{\Aut}{\mathrm{Aut}}

\newcommand{\del}{\partial}

\newcommand{\Scal}{\mathrm{Scal}}
\newcommand{\Sph}{{\mathbb S}}
\newcommand{\Z}{{\mathbb Z}}
\def\th/#1#2{{#1}^{#2}}

\renewcommand{\N}{\nabla}
\newcommand{\brs}[1]{\left|#1\right|}

\renewcommand{\i}{\sqrt{-1}}

\DeclareMathOperator{\Rm}{Rm}
\DeclareMathOperator{\Rc}{Rc}
\DeclareMathOperator{\divg}{div}
\DeclareMathOperator{\inj}{inj}

\begin{document}

\title{Poisson K-stability and the semiclassical Yau--Tian--Donaldson correspondence}

\author[V. Apostolov]{Vestislav Apostolov} \address{Vestislav Apostolov \\ D{\'e}partement de Math{\'e}matiques\\ UQAM\\ C.P. 8888 \\ Succursale Centre-ville \\ Montr{\'e}al (Qu{\'e}bec)  H3C 3P8 \\ Canada and Institute of Mathematics and Informatics, Bulgarian Academy of Sciences \\ Acad. Georgi Bonchev Str., Block 8 \\ 1113 Sofia, Bulgaria} 
\email{apostolov.vestislav@uqam.ca}

\author[B. Pym]{Brent Pym} 
\address{Brent Pym \\ Department of Mathematics and Statistics\\ McGill University \\
Burnside Hall
805 Sherbrooke Street West
Montreal (Quebec) H3A 0B9 \\ Canada}
\email{brent.pym@mcgill.ca}

\author[J. Streets]{Jeffrey Streets} \address{Jeffrey Streets \\ Department of Mathematics\\
         University of California, Irvine\\
         Irvine, CA 92617}
\email{jstreets@uci.edu}

\thanks{V.A. was supported in part by an NSERC Discovery Grant RGPIN-2023-03316 and a Team FRQNT Grant during the preparation of this work. B.P.~was supported by the Natural Sciences and Engineering Research Council of Canada (NSERC), through Discovery Grants RGPIN-2020-05191 and RGPIN-2026-07433. J.S. was supported by the NSF via DMS-2203536. We thank F.\ Bischoff, M.\ Gualtieri, Y.\ Odaka  and A.\ Lahdili for insightful discussions. }

\begin{abstract}  We introduce a notion of K-polystability for compact K\"ahler holomorphic Poisson manifolds.  On the one hand, this notion of stability is well-adapted to constructions of moduli spaces.  For instance, when the underlying manifold is K-polystable with reductive reduced automorphism group, Poisson K-stability is equivalent to geometric invariant theoretic stability in the space of Poisson bivectors, but there also exist K-unstable varieties that become stable after incorporating a Poisson structure.  On the other hand, the Poisson K-stability condition interacts well with generalized K\"ahler metrics---the background geometry of (2,2) supersymmetric string theory.  In particular, we conjecture that Poisson K-polystability characterizes the existence of constant scalar curvature symplectic generalized K\"ahler structures with a sufficiently small Poisson tensor---a natural extension of the Yau--Tian--Donaldson (YTD) conjecture.  Our main result is a proof of the existence part of this ``semiclassical YTD conjecture'' for Poisson structures on K\"ahler--Einstein Fano manifolds, using infinite-dimensional momentum map techniques.  In this way, we obtain the existence of many new examples of symplectic generalized K\"ahler structure of constant scalar curvature, and prove the conjecture completely in the case of the projective plane.


\end{abstract}

\maketitle
\setcounter{tocdepth}{1}
\tableofcontents

\section{Introduction}

\subsection{The Yau--Tian--Donaldson conjecture}
The theory of constant scalar curvature K\"ahler (cscK) metrics, arising from a visionary work of Calabi~\cite{calabi0} in the 1950s, has been a subject of active research for well over 70 years now.  This problem acquires rich structure through a formal momentum map interpretation discovered by Fujiki~\cite{fujiki} and Donaldson~\cite{donaldson-moment}.  The central conjecture in the field,
still open in its full generality, is the \emph{Yau--Tian--Donaldson} (YTD) conjecture, which states, broadly speaking, that for a compact K\"ahler manifold $X$ and a fixed class $\alpha \in H^{1,1}_{\mathrm dR}(X, \R)$, the existence of a cscK metric with class $\alpha$ is equivalent to an algebro-geometric notion of stability  for the pair $(X, \alpha)$; see, e.g., \cite{do-02,Tian,yau}. 

In a spectacular breakthrough, the YTD conjecture was proven for any smooth Fano variety $X$ endowed with its anti-canonical class $\alpha=2\pi c_1(X)$.  In this case, a cscK metric is necessarily K\"ahler--Einstein. There are, by now, many different proofs of this result: see \cite{BBJ21, CDS1, CDS2, CDS3, CSW18, ChiLi, Tia15,  Zha23} for the `if' part  and \cite{Be, Tian} for the `only if' part of the correspondence. This led to deep applications to the structure of moduli spaces of K\"ahler--Einstein (or equivalently, K-polystable) smooth Fano varieties~\cite{LWX, odaka1}. 

\subsection{$(2,2)$-supersymmetry beyond the K\"ahler case}
K\"ahler manifolds play an important role in string theory, providing examples of background geometries with $(2,2)$-supersymmetry.  In this realm, cscK metrics are linked with fixed points of the renormalization group flow.  However, not all $(2,2)$-supersymmetric string theory backgrounds arise from K\"ahler manifolds.  The relevant background geometry in general was described by Gates, Hull and Ro\v{c}ek in \cite{GHR}, under the name ``bihermitian geometry''.  These geometric structures were later rediscovered, and given the name of \emph{generalized K\"ahler} (GK) structures, in the context of Hitchin’s generalized geometry
program \cite{Gualtieri-PhD,Gualtieri-CMP,Hitchin-GCY}. In the ensuing decades, it has become clear that GK geometry is a deeply structured extension of K\"ahler geometry with novel implications for complex, symplectic and Poisson geometry.  Our goal in this paper is to give evidence for a YTD correspondence for generalized K\"ahler manifolds, which links the existence of constant scalar curvature generalized K\"ahler (cscGK) metrics with a suitable stability condition for holomorphic Poisson manifolds.

\subsection{Symplectic generalized K\"ahler manifolds}

The GK structures studied in this paper are those of ``symplectic type'', meaning that they are determined by a pair $(F,J)$, where $F$ is a symplectic form and $J$ is an integrable almost complex structure tamed by $J$, such that the tensor $I:= -F^{-1}J^* F$ is also an integrable almost complex structure.  K\"ahler manifolds correspond to the special case in which $F$ and $J$ are compatible, so that $I=J$, but in general, the complex manifolds $X=(M,J)$ and $X_- = (M,I)$ need not even be biholomorphic.

The complex structures $I$ and $J$ share a compatible Riemannian metric $g$, given by the symmetric part of $-FJ$, or equivalently, that of $-FI$.  Furthermore, by a result of Hitchin \cite{Hitchin0}, the complex manifolds $X$ and $X_-$ carry holomorphic Poisson structures with the same real part.  These holomorphic Poisson structures are equal to zero in the K\"ahler case, but can be highly nontrivial in general.  And while they need not be holomorphically isomorphic, they are related by a weaker notion of Morita equivalence~\cite{bischoff2018morita}, meaning that their spaces of symplectic leaves are isomorphic as 1-shifted holomorphic symplectic stacks.  

As explained in \cite{ASU,bischoff2018morita}, the complex manifold $X$, with its holomorphic Poisson structure, should be considered the ``holomorphic backdrop'' necessary to define a GK metric of symplectic type: once one fixes these data, and a de Rham class for the symplectic form $F$, a symplectic type GK structure is, roughly speaking, determined by a single smooth function, generalizing the classical K\"ahler potential.  More precisely, the connected components of this space are swept out by time-dependent Hamiltonian flows for the underlying real Poisson structure.  This results in a nonlinear structure and possibly nontrivial topology for the space of symplectic GK structures, a delicate new feature beyond the K\"ahler setting.  

\subsection{Scalar curvature and the string effective action}
The \emph{generalized scalar curvature} of a symplectic-type GK structure $(F,J)$ is the function
\begin{equation*}
\Gscal{(F, J)} := \Scal_g  -\tfrac{1}{12}|db|_g^2 + 2\Delta_g f - |df|^2_g, \qquad  f := \log\left(\frac{dV_F}{dV_g}\right), \end{equation*}
where $\Scal_g$ denotes the scalar curvature of the associated Riemannian metric $g=-(FJ)^{\rm sym}$, while $b:= -(FJ)^{\rm skew}$ is the ``torsion $2$-form'', $\Delta_g = d^*d$ is the corresponding Laplace operator, and $dV_F$  and $dV_g$ are the volume forms associated to  $F$ and $g$, respectively.  This expression is natural from several viewpoints:
\begin{itemize}
    \item It is the Lagrangian density for the string effective action~\cite{callan1985strings}.
    \item It is the natural notion of weighted scalar curvature associated to the cubic twisted Dirac operator \cite{Agricola, Bismut}.
    \item As shown in \cite{ASU}, it coincides with a momentum map on the space of almost generalized K\"ahler structures introduced by Boulanger~\cite{boulanger} in the toric case and Goto~\cite{Goto-moment} in general.
\end{itemize}
 This momentum map interpretation motivated, in \cite{ASU}, a natural extension of the Calabi program of finding cscK metrics in a given K\"ahler class.  Namely, fixing a complex manifold $X = (M,J)$, a holomorphic Poisson structure $\poiss$ on $X$, and a de Rham class $\alpha \in H^2(M, \R)$, we denote by
 \[
 \GK_{\poiss,\alpha} \subset \Omega^2(M,\mathbb \R)
 \]
 the set of symplectic forms $F$ with $[F]=\alpha$ such that $(F,J)$ is a GK structure with induced holomorphic Poisson structure $\poiss$.  We then seek an element $F \in \GK_{\poiss,\alpha}$ for which the generalized scalar curvature $\Gscal(F,J)$ is constant; we refer to $(F,J)$ as a constant scalar curvature generalized K\"ahler (cscGK) structure.

 Similarly to the cscK case, the formal momentum map interpretation of this problem yielded, in \cite{ASU}, a Mabuchi $1$-form on $\GK_{\poiss,\alpha}$, which vanishes at the cscGK metrics; a Futaki invariant defined on the Lie algebra of the group of reduced complex Poisson automorphisms of $(X, \poiss)$, which obstructs the existence of  a cscGK metric in $\GK_{\poiss, \alpha}$; and an additional Calabi--Lichnerowicz--Matsushima obstruction~\cite{ASU,Goto-Math.Ann}, which states  that if a cscGK metric exists, then the reduced automorphism group of $(X,\poiss)$ is reductive.

 \subsection{Poisson K-stability}
 By analogy with the cscK case, one would expect  that  the full obstruction  to the existence of cscGK structures in $\GK_{\poiss, \alpha}$ will be expressed  as a stability condition defined in terms of \emph{test configurations} of $(X, \poiss, \alpha)$. Strong evidence of such a link comes from \cite[Thm.1.8]{ASU} where,  in the case of a smooth \emph{toric} Poisson variety  $(X, \poiss, \alpha)$, it is shown that if $\GK_{\poiss, \alpha}$ contains a cscGK structure, then the toric K\"ahler manifold $(X, \alpha)$
is uniformly K-stable on \emph{toric} test configurations in the sense of \cite{do-02}.
Conversely, on a uniformly K-stable toric K\"ahler manifold $(X, \alpha)$  endowed with a toric Poisson tensor $\poiss$, there exists a constant $\epsilon>0$ such that  for each $|\lambda|<\epsilon$,  $\GK_{\lambda\poiss, \alpha}$ admits a cscGK metric. The proof of this result, however,  strongly relies on the toric assumption and the description  of the geometric structures on $X$ in terms  of data on the corresponding Delzant polytope, making it hard to generalize beyond the toric setting.  Our first main objective in this paper is therefore to introduce a candidate for a general complex analytic notion of  \emph{Poisson K-stability} associated to a smooth compact K\"ahler manifold $X$ endowed with a holomorphic Poisson tensor $\poiss$ and a K\"ahler deRham class $\alpha$. It is obtained as a natural extension of the notion of K-stability to the Poisson setting, as follows.

Recall from \cite{dervan-ross,s-dyrefelt1} that if $X$ is a compact complex manifold, and $\alpha \in H^{1,1}(X,\mathbb R)$ is a K\"ahler class, a \emph{K\"ahler test configuration for $(X,\alpha)$} (sometimes also called a \emph{cohomological test configuration}) is a compact normal complex variety $\tstX$ endowed with a K\"ahler de Rham class $\tstA \in H^{1,1}(\tstX, \R)$, a holomorphic $\C^*$-action, a $\C^*$-equivariant flat morphism $\tstX\to\PP^1$ that is equivariantly trivial with fibres $\tstX|_t \cong X$ for $t \neq 0 \in \PP^1$. Thus, a test configuration defines a generically isotrivial degeneration of $(X,\alpha)$ to a possible singular analytic variety $X_0 := \tstX|_0$.  Such a test configuration has an associated numerical invariant, called the \emph{Donaldson--Futaki invariant}
\[
\DF(\tstX,\tstA) \in \mathbb{R}
\]
defined by a suitable intersection formula ~\cite{do-02,odaka1,Tian,wang}.  The K-stability of $(X,\alpha)$ is then expressed in terms of the positivity properties of the invariants $\DF(\tstX,\tstA)$ on suitable classes of test configurations.  We recall the details in \autoref{sec:stability} below.

If $\poiss$ is a holomorphic Poisson structure on $X$, and $\tstX$ is a test configuration, then by continuity, there is at most one way to extend the Poisson structure $\poiss$ on $X\cong \tstX|_\infty$ to a $\C^*$-invariant Poisson structure $\Sigma$ on $\tstX$ that is tangent to the fibres of the projection $\tstX\to\PP^1$.  If such an extension exists, we say that the test configuration $\tstX$ is $\poiss$-Poisson.  If, in addition, the central fibre $(X_0, \poiss_0)$ of $(\tstX,\Poiss)$ is isomorphic to $(X,\poiss)$, we say that $\tstX$ is a Poisson product test configuration. 
We propose the following definition:
\begin{defn}
    A holomorphic Poisson K\"ahler manifold $(X, \poiss, \alpha)$ is \emph{Poisson K-semistable} if for any $\poiss$-Poisson K\"ahler test configuration $(\tstX, \Poiss, \tstA)$,  we have ${\rm DF}(\tstX, \tstA) \geq 0$.  It is  \emph{Poisson K-polystable} if, furthermore, equality holds only for product $\poiss$-Poisson test configurations.
\end{defn}

Observe that this definition is loosely analogous to the notion of stable Higgs bundles, where the role of the vector bundle is played by the pair $(X,\alpha)$, and the role of the Higgs field is played by the Poisson tensor $\poiss$.  A fundamental observation is that if the underlying K\"ahler manifold is K-polystable, then the K-polystability of the triple is determined by geometric invariant theory (GIT) applied to the Poisson bivector:
\begin{iProp}[see Proposition~\ref{p:Kahler/Poisson-K-polystable}] \label{p:polystable-intro}
Suppose that $(X,  \alpha)$ is a K-polystable K\"ahler manifold whose reduced automorphism group $\Aut_{\rm red}(X)$ is reductive, and that $\poiss$ is a holomorphic Poisson structure on $X$.  Then $(X, \poiss, \alpha)$ is Poisson K-polystable if and only if $\poiss$ is a polystable point for the linear action of $\Aut_{\rm red}(X)$ on the space $H^0(X, \Wedge^2 T^{1,0}_X)$.
\end{iProp}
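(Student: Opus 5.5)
The plan is to compare test configurations of the triple $(X,\poiss,\alpha)$ with one-parameter subgroups of $G:=\Aut_{\rm red}(X)$, and then apply the Hilbert--Mumford criterion for the linear action of $G$ on $V:=H^0(X,\Wedge^2T^{1,0}_X)$. The Poisson condition $[\poiss,\poiss]=0$ is $G$-invariant and closed, so it defines a $G$-stable subvariety of $V$; polystability there coincides with polystability in $V$, because closedness of the orbit is intrinsic.

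Step 1 treats test configurations that are not products. Suppose $(\tstX,\Poiss,\tstA)$ is a $\poiss$-Poisson test configuration which is not a product, i.e.\ its central fibre is not $\C^*$-equivariantly $X\times\C$. K-polystability of $(X,\alpha)$ gives $\DF(\tstX,\tstA)>0$. These configurations therefore never obstruct, and the Poisson structure plays no role for them.

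Step 2 treats the remaining test configurations, which are product configurations of $(X,\alpha)$. Such a configuration is given by a one-parameter subgroup $\lambda:\C^*\to \Aut(X)$ acting on $X\times\C$, compactified over $\PP^1$. The first substep is to reduce to $\lambda\subset G$, modulo the part coming from $H^1$ of the Albanese/nonreduced part. Since $G$ is reductive, the Futaki invariant vanishes on its Lie algebra by Matsushima-type arguments: K-polystability forces $\Futc\equiv 0$. Hence $\DF=0$ for every product configuration. The second substep determines when the configuration is $\poiss$-Poisson. The bivector induced on $\tstX$ over $t\neq0$ is $\lambda(t)^{-1}_*\poiss$ in the trivialization where $\C^*$ acts diagonally. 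It extends over $t=0$ precisely when $\lim_{t\to0}\lambda(t)\cdot\poiss$ exists in $V$. In that case the central fibre is $(X,\poiss_0)$ with $\poiss_0$ the limit. The configuration is a Poisson product exactly when $\poiss_0\cong\poiss$, which I want to show is equivalent to $\poiss_0\in G\cdot\poiss$.

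Step 3 assembles these. Poisson K-polystability is equivalent to the following condition: for every $\lambda$ with $\lim\lambda(t)\cdot\poiss=:\poiss_0$ existing, we have $\poiss_0\in G\cdot\poiss$. By the Hilbert--Mumford/Kempf--Richardson criterion for reductive groups, this condition is equivalent to $G\cdot\poiss$ being closed, which is exactly polystability.

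There are two points to check here. First, the statement "$\poiss_0\cong\poiss$" uses automorphisms of $X$, possibly outside $G$. Any isomorphism lies in $\Aut(X)$, and the non-reduced part acts trivially on bivectors since it consists of translations along the Albanese. Second, for the closed-orbit direction, a limit inside the orbit must be realized with $\lambda$ fixing $\poiss_0$, up to conjugation.

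The main obstacle is Step 2: showing that, for the compactified product configurations, extension of $\Poiss$ tangent to fibres is equivalent to existence of the limit in the finite-dimensional space $V$. One must also handle the sign convention for $\lambda$ versus $\lambda^{-1}$, so that both directions of the Hilbert--Mumford criterion are captured. I would try first to compute in the local coordinate $t$ near $0$. On $X\times\C^*$ the fibrewise bivector is $t\mapsto\lambda(t)\cdot\poiss$, viewed as a section of $\Wedge^2T_{X\times\C/\C}$. Decompose $\poiss$ into $\lambda$-weight vectors $\poiss=\sum_k\poiss_k$, which is possible since $\lambda$ acts on $V$ through a torus. Then $\lambda(t)\cdot\poiss=\sum t^k\poiss_k$, which extends holomorphically across $t=0$ if and only if all weights with $\poiss_k\ne0$ are nonnegative. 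In that case the limit is $\poiss_0$ itself, i.e.\ the weight-zero component.
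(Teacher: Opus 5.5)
Your architecture is the paper's: K-polystability of $(X,\alpha)$ leaves only product configurations (with $\DF=0$), these are of the form $X\rtimes_\rho\PP^1$, the Poisson condition is existence of $\lim_{t\to0}\rho(t)\cdot\poiss$, and one concludes with the Hilbert--Mumford criterion for closed orbits.

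\textbf{The gap.} The genuine gap is your step ``$\poiss_0\cong\poiss\iff\poiss_0\in G\cdot\poiss$''. Being a Poisson product means $\poiss_0=\phi\cdot\poiss$ for some $\phi\in\Aut(X,\alpha)$. Your justification (the non-reduced part consists of Albanese translations acting trivially on bivectors) concerns at best the identity component $\Aut_\circ(X)$. It ignores that $\Aut(X,\alpha)$ is in general disconnected, and its other components can act nontrivially on $H^0(X,\Wedge^2T^{1,0}_X)$; an example is the factor swap of $\PP^1\times\PP^1$ in the anticanonical class. So Poisson K-polystability only gives you $\poiss_0\in\Aut(X,\alpha)\cdot\poiss$, which is not the hypothesis of Hilbert--Mumford for $G$.

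The missing argument runs as follows. $G=\Aut_{\rm red}(X)$ is intrinsically defined, hence normal in $\Aut(X)$. Therefore $G\cdot(\phi\cdot\poiss)=\phi\cdot(G\cdot\poiss)$ has the same dimension as $G\cdot\poiss$. Since $G$ acts algebraically on this space (Lemma~\ref{lem:fujiki}), $\overline{G\cdot\poiss}\setminus G\cdot\poiss$ consists of orbits of strictly smaller dimension. Because $\poiss_0\in\overline{G\cdot\poiss}$, this forces $\poiss_0\in G\cdot\poiss$. This is what the paper packages as ``the $\Aut(X,\alpha)$-orbit of $\poiss$ is a finite union of $\Aut_{\rm red}(X)$-orbits''.

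\textbf{Two lesser points.} First, the reduction to $\rho$ with values in $G$ is not actually carried out (``modulo the part coming from $H^1$ of the Albanese''). You need that $X\rtimes_\rho\PP^1$ admits a K\"ahler class only if $\rho\subset\Aut_{\rm red}(X)$. The paper proves this by using the degenerate Leray spectral sequence to compare the $\C^*$-actions on $H^1$, hence on $\Alb(X)$, of the fibres over $0$ and $\infty$. Without this, some product K\"ahler configurations could come from one-parameter subgroups outside the group to which you apply Hilbert--Mumford.

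Second, $\DF=0$ on products does not come from reductivity of $G$; reductivity alone does not kill the Futaki invariant. It is part of the definition of K-polystability, or follows from $\DF$ changing sign under $\rho\mapsto\rho^{-1}$. Reductivity is needed only for the Hilbert--Mumford criterion.
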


On the other hand, it can happen that a K-unstable pair $(X,\alpha)$ becomes Poisson K-polystable after incorporating a Poisson structure, at least if we restrict our attention to smooth test configurations. An example is provided by the second Hirzebruch surface, viewed as a compactification of the cotangent bundle of $\PP^1$, with the Poisson structure induced by standard symplectic structure on the cotangent bundle; see Example \ref{p:F_2-poisson-stability}.

An interesting feature of the notion of Poisson K-polystability is that, at least under some cohomological and smoothness assumptions, it can be transferred between the two different holomorphic Poisson manifolds $(X,\poiss)$ and $(X_-,\poiss_-)$  underlying a GK structure; see Lemma \ref{l:GK-test}.  Indeed, the example of the second Hirzebruch surface mentioned above exploits this correspondence by relating it to the K-polystable manifold $\PP^1\times \PP^1$ via a GK structure.  In this way, our proof of Poisson K-polystability for the second Hirzebruch surface, which is a purely complex geometric statement, makes essential use of generalized K\"ahler geometry.

\subsection{A semiclassical YTD conjecture}
\medskip
With this notion of Poisson K-polystability in hand, we study a sort of ``semiclassical'' analogue of the YTD correspondence, for GK manifolds that are ``close'' to classical K\"ahler manifolds, i.e.~they are given by small Poisson deformations of K\"ahler pairs $(X,\alpha)$. As a guiding expectation, we formulate the following. \begin{iConj}\label{c:intro}  Let $(X, \sigma, \alpha)$ be a compact K\"ahler holomorphic Poisson manifold.  Then the triple $(X,\poiss,\alpha)$ is Poisson K-polystable if and only if there exists $\epsilon > 0$ such that for any $\lambda \in \C$ with $|\lambda| <\varepsilon$, the triple  $(X, \lambda\poiss, \alpha)$ admits a cscGK structure.
\end{iConj}
 We remark that as for the classical YTD conjecture, the notion of stability used in the present conjecture may need some modification outside the K\"ahler--Einstein case; see Remark \ref{rmk:caveats} below.  On the other hand, it suggests various natural extensions of this conjecture to the case of  ``large'' Poisson structures, or to more  general GK structures of non-symplectic type, by incorporating the formalism of \cite{AGJ,bischoff2018morita} based on Morita theory of symplectic groupoids; we leave such investigations for future work.

Our main result establishes one direction of the semiclassical YTD conjecture (existence of the metric) for Poisson structures on Fano K\"ahler--Einstein manifolds:
\begin{iTheorem}\label{thm:main} Suppose that $X$ is a K-polystable smooth Fano variety, and $\poiss$ is a holomorphic Poisson structure on $X$.  Then $(X, \poiss, 2\pi c_1(X))$ is Poisson K-polystable if and only if $\poiss$ is a polystable point for the linear action of $\Aut_{\circ}(X)$ on the vector space $H^0(X, \Wedge^2 T^{1,0}_X)$. In this case, there exists $\varepsilon >0$ such that  for all $\lambda\in \C$  with $|\lambda|<\varepsilon$, $X$ admits a cscGK symplectic GK structure $F_{\lambda} \in \alpha=2\pi c_1(X)$ with associated Poisson tensor $\lambda\poiss$.
\end{iTheorem}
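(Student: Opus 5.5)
The theorem has two parts. The first is an algebraic characterization of Poisson K-polystability; the second is an existence statement for cscGK structures with small Poisson tensor.

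\textbf{The algebraic characterization.} By the Yau--Tian--Donaldson correspondence in the Fano case, a K-polystable smooth Fano $X$ admits a K\"ahler--Einstein metric. By the Matsushima theorem, $\Aut_\circ(X)$ is then reductive. For Fano $X$ there are no holomorphic $1$-forms, so $\Aut_\circ(X)=\Aut_{\rm red}(X)$. The hypotheses of Proposition~\ref{p:polystable-intro} (i.e.\ Proposition~\ref{p:Kahler/Poisson-K-polystable}) therefore hold with $\alpha=2\pi c_1(X)$, and that proposition yields the stated equivalence directly.

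\textbf{Existence: the setup.} Assume $\poiss$ is $G^{\C}$-polystable, where $G^{\C}=\Aut_\circ(X)$ and $G$ is the isometry group of a fixed K\"ahler--Einstein metric $\omega$. By the Kempf--Ness theorem, after replacing $\poiss$ by $g\cdot\poiss$ for some $g\in G^{\C}$ we may assume $\mu(\poiss)=0$. Here $\mu$ is the momentum map for the $G$-action on $H^0(X,\Wedge^2T^{1,0}_X)$ with respect to the $L^2$ Hermitian product induced by $\omega$. Biholomorphic changes preserve the existence problem, so this normalization is harmless.

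\textbf{Existence: the perturbation scheme.} I will use the description of $\GK_{\lambda\poiss,\alpha}$ from \cite{ASU,bischoff2018morita}: symplectic GK structures near $\omega$ are generated by a potential $\phi$ together with Hamiltonian flows of the real Poisson tensor $\mathrm{Re}(\lambda\poiss)$. This gives a smooth family $F_{\lambda,\phi}$ with $F_{0,\phi}=\omega_\phi$. Consider the map
\[
(\lambda,\phi)\mapsto \Gscal(F_{\lambda,\phi},J)-\underline{\Gscal}.
\]
Its linearization at $(0,0)$ in $\phi$ is the Lichnerowicz operator of $\omega$, up to sign. The kernel of this operator is the space of Hamiltonians of $\mathfrak g$, and its cokernel is the same space. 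A Lyapunov--Schmidt reduction solves the equation modulo $\mathfrak g$, giving a finite-dimensional obstruction map $\Phi(\lambda,\xi)\in\mathfrak g^*$, where $\xi\in\mathfrak g^{\C}$ parametrizes $G^{\C}$-translates of $\poiss$. The key computation is an expansion in $\lambda$. Using $\Gscal$ as the Goto/Boulanger momentum map, I expect $\Phi$ to be equivariant and to satisfy $\Phi = |\lambda|^2\,c\,\mu(\exp(\xi)\cdot\poiss)+O(|\lambda|^3)$ for a nonzero constant $c$. The reason is that the first nontrivial correction to $\Gscal$ is quadratic in the Poisson tensor (the term $|db|^2$ and the $\Delta f$ terms), and pairing it with a Killing potential gives the $L^2$ momentum map. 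Equivalently, the Futaki invariant of $(X,\lambda\poiss)$ on the stabilizer should be the leading obstruction.

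\textbf{Existence: closing the argument.} After dividing by $|\lambda|^2$, the zero of $\mu$ at the polystable point is nondegenerate transverse to the stabilizer orbit, since the stabilizer of a polystable point is reductive and $d\mu$ is surjective onto the annihilator of its Lie algebra. An implicit function theorem, applied equivariantly with respect to the stabilizer and in the style of Sz\'ekelyhidi's deformation argument, then produces a zero of $\Phi$ for all $|\lambda|<\varepsilon$. The remaining component along the stabilizer's Lie algebra is handled by the vanishing of the Futaki invariant, which follows from $\mu(\poiss)=0$ and the momentum map formalism of \cite{ASU}. I expect the main obstacle to be the precise identification of the $|\lambda|^2$ term of $\Phi$ with $\mu$, including control of the nonlinear Hamiltonian-flow parametrization of $\GK_{\lambda\poiss,\alpha}$ up to second order.
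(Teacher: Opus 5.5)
Your proof of the equivalence (YTD gives a K\"ahler--Einstein metric, Matsushima gives reductivity, $\Aut_\circ(X)=\Aut_{\rm red}(X)$ for Fano $X$, then Proposition~\ref{p:Kahler/Poisson-K-polystable}) is the paper's argument.

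The existence part has a genuine gap, at the step you flag yourself. You claim that, to leading order, the finite-dimensional obstruction is $|\lambda|^2$ times the moment map of a $K$-invariant \emph{Hermitian} form on $W$. This is the heart of the theorem, and two things are missing.

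First, projecting $\Gscal(F_{\lambda,\phi},J)$ onto $\omega$-Killing potentials is not directly a moment map, because those potentials are not $F_{\lambda,\phi}$-Hamiltonians. The paper Moser-pulls everything back to the fixed form $\omega_0$. This gives $\widetilde{\mathbf S}:U_1\to\AGK_{\omega_0}$, on which Goto's scalar curvature is an honest moment map. The obstruction is then the moment map of $\widetilde{\mathbf S}^*\boldsymbol\Omega$, and its quadratic part is the moment map of the constant form $(D_0\widetilde{\mathbf S})^*\boldsymbol\Omega$.

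Second, to know that this form is compatible with the complex structure of $W$, you need $D_0\widetilde{\mathbf S}$ to be complex-linear. But $D_0\widetilde{\mathbf S}(\poiss)=\mathcal L_{\nabla\dot\varphi_\poiss}J-\tfrac12\mathcal L_{v_\poiss}J$, with $v_\poiss=(\delta\mathbb G_{\omega_0}(\omega_0Q\omega_0))^\sharp$, contains the first-order LeBrun--Simanca correction $\dot\varphi_\poiss$. That correction is only real-linear in $\poiss$, and it produces second-order cross terms that your heuristic (``only $|db|^2$ and $\Delta f$ contribute'') ignores. The paper removes $\dot\varphi_\poiss$ using the Bochner identity $\delta\delta\nabla^-\eta=\tfrac12\Delta\delta\eta-\langle d^c\eta,\rho_{\omega_0}\rangle+\tfrac12\langle\eta,d\Scal(\omega_0)\rangle$ together with $\rho_{\omega_0}=\frac{\Scal(\omega_0)}{2n}\omega_0$. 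This is the one place the K\"ahler--Einstein condition enters beyond reductivity.

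Your argument never uses it, so as written it would prove $(1)\Rightarrow(2)$ of Conjecture~\ref{thm-main} for every cscK manifold with $H^{2,0}=0$, which the paper cannot do. Without complex-linearity, the quadratic term depends on $\arg\lambda$ rather than being $|\lambda|^2$ times a fixed map, and Kempf--Ness need not produce a zero on the orbit. Even in the KE case, the induced Hermitian form is a Green-operator-weighted product, not a priori the $L^2$ one, so your normalization $\mu(\poiss)=0$ must be taken with respect to it.

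Two smaller points:
\begin{itemize}
\item Hamiltonian flows of ${\rm Re}(\lambda\poiss)$ move within $\GK_{\lambda\poiss,\alpha}$, but they cannot carry the K\"ahler form $\omega$ (Poisson tensor $0$) into it. You need Gualtieri's construction (Lemma~\ref{l:gualtieri}, which uses $H^{0,2}(X)=0$).
\item The implicit function theorem is run in H\"older spaces, so the resulting structure is only finitely differentiable. The paper needs Theorem~\ref{t:schauder} to conclude it is smooth.
\end{itemize}

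Granting the identification, your endgame is a legitimate alternative to the paper's. The paper finds the zero using the momentum flow and a Lojasiewicz estimate for a merely tamed form (Proposition~\ref{p:local-tamed-GIT}), applied to the full nonlinear moment map. You instead apply an implicit function theorem at a zero of the quadratic model. That is more elementary, but it must be set up equivariantly for $H=K_\poiss$, with $\mathfrak h$ its Lie algebra: restrict to $\xi\in(\mathfrak h^\perp)^H$, so that $\exp(\sqrt{-1}\,\xi)\cdot\poiss$ stays $H$-fixed. For general $\xi$, the stabilizer of $\exp(\sqrt{-1}\,\xi)\cdot\poiss$ is only a conjugate of $H$, and the $\mathfrak h$-component of the obstruction is not a Futaki invariant. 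On $H$-fixed points it is one, and it vanishes by Lemma~\ref{l:GK-Futaki=K-Futaki} together with the vanishing of the K\"ahler Futaki invariant.
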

In the special cases when the connected component $\Aut_{\circ}(X)$ of $\Aut(X)$ is trivial,  or $(X, \poiss)$ is toric, the existence part was established (using different methods) in \cite{Goto-Math.Ann} and \cite{ASU}, respectively, under the more general assumption that $(X, \alpha)$ admits a cscK metric. It is important to note, however, that these special solutions appear in cases when equivariant Poisson K-polystability  is equivalent to the equivariant K-polystability of $X$ itself, so that the notion of Poisson K-polystability presented here was not necessary.  Our present Theorem~\ref{thm:main} is new even for $X = \PP^2$, where we obtain, for example, the existence of a cscGK metric associated to a Poisson structure vanishing on a smooth cubic. Combined with the generalized K\"ahler Matsushima--Lichnerowicz obstruction~\cite{ASU, Goto-Math.Ann}, the existence part of Theorem~\ref{thm:main} actually confirms Conjecture~\ref{c:intro} completely for $\PP^2$.

More generally, we have the following:
\begin{iCor}\label{c:delPezzo}Suppose that $X$ is a K-polystable Del Pezzo complex surface and that $\sigma \in H^{0}(X, K^{-1}_X)$ is a Poisson structure.  Let $D\subset X$  be the anticanonical divisor on which $\poiss$ vanishes. Then $X$ admits a symplectic cscGK structure $F_{\lambda} \in 2\pi c_1(X)$ with Poisson tensor $\lambda \poiss$ (for $|\lambda|$  sufficiently small) in the following cases:
\begin{enumerate}
\item $X=\PP^2$ and $D$ is either a smooth cubic or a triangle of lines;
\item  $X=\PP^1 \times \PP^1$ and $D$ is isomorphic to one of the following: a smooth curve of genus one, the toric boundary divisor, or twice the diagonal.
\item $X= Bl_{p_1, p_2,p_3}(\PP^2)$ is the blow-up of $\PP^2$ at three points and $D$ is either smooth or a toric boundary.
\item $X$ is any other del Pezzo surface, and $\poiss$ is arbitrary.
\end{enumerate}
\end{iCor}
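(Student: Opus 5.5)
The plan is to deduce Corollary~\ref{c:delPezzo} from Theorem~\ref{thm:main}. For each case we check that $\poiss$ is a polystable point for the linear action of $\Aut_\circ(X)$ on $H^0(X, K_X^{-1}) = H^0(X,\Wedge^2 T^{1,0}_X)$. On a surface every bivector is Poisson, so any section $\poiss$ qualifies. The K-polystable del Pezzo surfaces are known: $\PP^2$, $\PP^1\times\PP^1$, $Bl_3(\PP^2)$, and the blow-ups of $\PP^2$ at $k\ge 4$ general points, which have degree $\le 5$.

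\textbf{Case (4).} For $k\ge 4$ the group $\Aut_\circ(X)$ is trivial. The orbit of $\poiss$ is then a single point, hence closed, so $\poiss$ is automatically polystable and Theorem~\ref{thm:main} applies at once.

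\textbf{Case (1), $X=\PP^2$.} Here $\Aut_\circ=PGL_3$, and we work with $SL_3$ acting on cubic forms (the anticanonical bundle is $\cO(3)$, linearized via $\Wedge^2 T$). We use the classical GIT of plane cubics:
\begin{itemize}
\item smooth cubics are stable, having finite stabilizers and closed orbits;
\item the triangle of lines $xyz$ has stabilizer containing the maximal torus. By the Hilbert--Mumford criterion relative to that torus, $xyz$ is polystable: its monomial $x y z$ has weight $0$, which lies in the interior of the convex hull of its own weights, here just the single weight $0$. Equivalently, by Luna/Kempf–Ness, the orbit of a torus-fixed point with reductive stabilizer is closed. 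So it is polystable.
\end{itemize}
One should also note that the other semistable cubics (nodal) are not polystable, but only the positive direction is needed here.

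\textbf{Case (2), $X=\PP^1\times\PP^1$.} Here $\Aut_\circ=PGL_2\times PGL_2$ acting on bidegree $(2,2)$ forms. The three cases are handled as follows:
\begin{itemize}
\item a smooth genus one curve has finite stabilizer and a closed orbit (the standard stability of smooth $(2,2)$ curves);
\item the toric boundary $x_0x_1y_0y_1$ is torus-fixed of weight zero, and is polystable by the same argument as the triangle;
\item twice the diagonal is $(x_0y_1-x_1y_0)^2$, which is fixed by the diagonal $SL_2$. This is reductive, and its centralizer in $SL_2\times SL_2$ is finite modulo center. By Luna's criterion (the orbit of $v$ is closed iff the $N(H)$-orbit is closed in $V^H$, with $H$ the stabilizer) we compute $V^{H}$, the $SL_2$-invariants in $S^2\otimes S^2$. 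This is the one-dimensional span of $\delta^2$, on which $N(H)/H$ acts finitely. Hence the orbit is closed.
\end{itemize}

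\textbf{Case (3), $X=Bl_3(\PP^2)$.} Here $\Aut_\circ$ is a two-dimensional torus $T$, so polystability reduces to the weight criterion: $0$ must lie in the relative interior of the convex hull of the torus weights of $\poiss$. These weights are the lattice points of the hexagon, the anticanonical polytope, and the origin is the only interior point. The two cases:
\begin{itemize}
\item the toric boundary is the constant monomial, of weight $0$, so it is fixed;
\item a smooth (generic) $D$ involves all vertex monomials, whose hull is the full hexagon containing $0$ in its interior.
\end{itemize}
More carefully, smoothness of $D$ must force enough nonzero coefficients. We argue that if the weight support lies in a closed half-plane avoiding the interior condition, then $D$ contains a toric curve with multiplicity or a torus-fixed singular point, contradicting smoothness.

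\textbf{Main obstacle.} We expect the difficulty to lie in this last combinatorial check for smooth $D$ in case (3), together with the Luna-type argument for the nonreduced diagonal in case (2), since these are the only places where the polystability is not immediate from finite stabilizers or torus-fixedness.
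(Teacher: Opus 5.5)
Your proposal is correct and follows the paper's route: the corollary is Theorem~\ref{thm:main} combined with a case-by-case check that $\poiss$ is $\Aut_\circ(X)$-polystable, and where the paper relies on the classical GIT classifications of plane cubics and $(2,2)$-curves recorded in its examples, you supply self-contained verifications via zero-weight vectors, Luna's normalizer criterion for the double diagonal, and the torus weight-polytope criterion. Two phrasings should be tightened: in case (1) the operative fact is that a vector of weight zero for a \emph{maximal} torus has a closed orbit (a torus-level Hilbert--Mumford check alone does not give $SL_3$-polystability, and ``fixed by some torus with reductive stabilizer'' is not sufficient in general), and in case (3) a non-polystable support always lies in $\{m : \langle m, u_\rho\rangle \ge 0\}$ for a ray $u_\rho$ of the fan (the three lines through opposite vertices of the hexagon are exactly the $u_\rho^\perp$), so $D$ contains the $(-1)$-curve $D_\rho$ as a component and is singular because $D_\rho\cdot(D-D_\rho)=2$---no higher multiplicity or torus-fixed singular point is needed.
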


While these examples have dimension two, our methods readily produce higher dimensional cscGK metrics as well; for instance, we obtain a cscGK metric on $\PP^3$ associated with an elliptic normal curve via the celebrated Sklyanin Poisson structure; see Example \ref{ex:sklyanin}.

\subsection{Outline of the proof}
\bigskip The proof of Theorem~\ref{thm:main} uses the existence of a K\"ahler--Einstein metric $\omega_0 \in 2\pi c_1(X)$ provided by the Yau--Tian--Donaldson correspondence. This implies, in particular, that the reduced automorphism group of $X$ is reductive, so that Proposition~\ref{p:polystable-intro} gives the equivalence between Poisson K-polystability and GIT stability of the Poisson bivector. 
To prove the existence of a cscGK structure  in $\GK_{\lambda\poiss, 2\pi c_1(X)}$ in the polystable case, we  seek  to deform the K\"ahler--Einstein metric $\omega_0$ on $X$ to a cscGK structure $F_\lambda\in \GK_{\lambda\poiss, 2\pi c_1(X)}$ for small $|\lambda|$ using a perturbative analysis. To this end, we set up the necessary analytic background for applying the implicit function theorem, building on a construction of GK deformations of K\"ahler manifolds due to  Gualtieri~\cite{Gualtieri-Hamiltonian}. 

The primary difficulty is that if the K\"ahler--Einstein metric $\omega_0$ has a non-trivial connected isometry group $K$ (which is the case with some of the examples discussed in Corollary \ref{c:delPezzo} above), a direct application of the implicit function theorem is obstructed, because the Lie algebra of $K$  is identified with the kernel of the linearization of $\Gscal(F,J)$. If, furthermore, the connected Poisson automorphism group $\Aut_\circ(X, \poiss)$ is a strict subgroup of $\Aut_{\circ}(X)$, the latter obstruction cannot be resolved by working $K$-equivariantly, because a cscGK metric $F_\lambda\in \GK_{\lambda\poiss, 2\pi c_1(X)}$ cannot be $K$-invariant when $\lambda\neq 0$ by the Lichnerowicz--Matsushima obstruction~\cite{ASU}. 

To overcome this difficulty, we explain how to recast the setup as a finite-dimensional GIT problem, similar to techniques that have been  employed in K\"ahler geometry (see e.g. ~\cite{delloque, dms, ortu, sektnan-tipler, sz}) for solving different problems.  Namely, we relax the integrability hypotheses on the GK structure, to consider the space $\AGK_{\omega_0}$ of ``almost'' generalized K\"ahler structures which are tamed by a fixed symplectic form $\omega_0$, and we adapt a construction of generalized K\"ahler metrics from Poisson deformations due to Gualtieri \cite{Gualtieri-Hamiltonian} to the case of non-integrable bivectors, to obtain a $K$-equivariant ``slice'' map
\[
\tilde {\bf S} : H^0(X, \Wedge^2 T^{1,0}_X) \to \AGK_{\omega_0}
\]
Pulling back the formal momentum map picture for the generalized scalar curvature mentioned above, we reduce the problem to finding zeroes of the momentum map in the $\Aut_{\circ}(X)$ orbit of $\poiss$ for the linear $\Aut_{\circ}(X)$-action on $H^0(X, \Wedge^2 T^{1,0}_X)$, where the latter is endowed with the symplectic structure pulled back from $\AGK_{\omega_0}$.  The result then follows, in principle, from the Kempf--Ness theorem relating GIT stability to the structure of the momentum map.

However, there are two key technicalities that need to be overcome, leading to some additional results that may be of independent interest: 
\begin{enumerate}
\item In contrast with previous works in the K\"ahler case and {similarly to \cite{delloque}}, the slice map we construct need not be holomorphic.  Consequently, the symplectic structure on  $H^0(X, \Wedge^2T^{1,0}_X)$ need not be compatible with its natural complex structure, which is one of the hypotheses of the classical Kempf--Ness theory.  But in the K\"ahler--Einstein case, we are able to use curvature identities to show that the  symplectic form \emph{tames} the complex structure of $H^0(X, \Wedge^2T^{1,0}_X)$ in a sufficiently small neighbourhood of the origin.  

This {requires an extension} of the relevant (local) aspects of Kempf--Ness theory to the more general setting of linear actions on complex vector spaces with tame symplectic structures in a neighbourhood of the origin, see {\cite{delloque} and Section \ref{a:GIT}. The idea to use the dynamical stability of the momentum map flow in this setup is due to \cite{dms}.}
    
\item The construction of the slice invokes the implicit function theorem, which requires us to work in suitable H\"older spaces.   As a result, the cscGK structure we obtain from the momentum map picture is, a priori, only of finite regularity.  To deal with this issue, we prove an extension of the classical higher regularity theory for cscK structures to the GK setting (\autoref{t:schauder}).  In the classical K\"ahler case, the regularity can be obtained through use of the K\"ahler potential, or in a more geometric fashion via Bianchi identities.  GK metrics are again determined by local scalar potentials (cf. \cite{AGJ, GKP}), which is morally why the regularity persists in this setting.  But the theory of generalized K\"ahler potentials is quite subtle, so we instead use the geometric approach, roughly inspired by \cite{CW, TV}.  The analysis of the generalized scalar is considerably more intricate than in the classical case, but by a careful examination of identities for the Lee forms of generalized K\"ahler structures, we obtain an elliptic system that enables the desired bootstrapping.   
\end{enumerate}

\section{Symplectic generalized K\"ahler structures of constant scalar curvature}

We recall some fundamental theory of symplectic GK structures and the setup of \cite{ASU}.  Given this we recall the fundamental result of Gualtieri constructing symplectic GK structures via  deformation by a Poisson tensor.  We end the section by proving Theorem \ref{t:schauder}, an elliptic regularity result for symplectic constant scalar curvature GK structures.

\subsection{Generalized K\"ahler structures}

\begin{defn} \label{GKdef} Let $M$ be a smooth manifold, and let $H_0$ be a closed 3-form on $M$.  A \emph{generalized K\"ahler (GK) structure on $(M,H_0)$} is a tuple $(g, b, I, J)$, where $I$ and $J$ are
integrable almost complex structures, $b$ is a $2$-form, $g$ is a Riemannian metric compatible with both $I$ and $J$, and
furthermore
\begin{align*}
 d^c_I \omega_I = H_0 + db = - d^c_J \omega_J.
\end{align*}
\end{defn}
Every generalized K\"ahler structure $(g,b,I,J)$ has an associated Poisson structure, defined by the bivector field
\begin{align*}
 Q := \tfrac{1}{2} [I,J]g^{-1} \in \Wedge^2 TM.
\end{align*}
By \cite{apostolov1999bi,Hitchin0}, $Q$ is the real part of holomorphic $(2,0)$-bivectors
with respect to $I$ and $J$, which we denote by
\begin{gather} \label{e:PoissonJ}
\begin{split}
\poiss := Q - \i J Q, \qquad \poiss_- := Q - \i I Q.
\end{split}
\end{gather}
These bivector fields define holomorphic Poisson structures on the complex manifolds $X=(M,J)$ and $X_- = (M,I)$, respectively.

\subsection{GK structures of symplectic type}
In this paper, we focus on GK structures associated to symplectic forms on complex manifolds, extending the classical notion of compatibility in K\"ahler geometry, as follows.  Let $(M,J)$ be a complex manifold, and let $F$ be a symplectic form on $M$.  If the tensor
\begin{equation*}
 g := -(FJ)^{\rm sym}
\end{equation*}
is a Riemannian metric, and the tensor
\begin{equation*}
    I :=  -F^{-1} J^* F 
\end{equation*}
is an integrable almost complex structure, then the tuple $(g,b,I,J)$
defines a GK structure with torsion two-form
\begin{equation*}
b := -(FJ)^{\rm skew}
\end{equation*}
and underlying three form $H_0=0$.  Note that the pair $(F,J)$ determines the entire tuple $(g,b,I,J)$ algebraically.  For instance, the two complex structures and the real Poisson structure $Q=\tfrac{1}{2}[I,J]g^{-1}$ are related by
\begin{align}
I = J - QF. \label{eq:IJQF}
\end{align}

\begin{defn}
\label{symplecticGKdef} A generalized K\"ahler structure defined by a complex structure $J$ and a symplectic form $F$ as above is said to be of \emph{symplectic type}.  By abuse of notation, we will often refer to the pair $(F,J)$ as a \emph{symplectic GK structure}, or say that $F$ defines a \emph{symplectic GK structure on $(M,J)$.}
\end{defn}

Because of the nonlinearity of the conditions defining symplectic generalized K\"ahler structures, the notion of K\"ahler class is more subtle than in classical K\"ahler geometry.  As explained in \cite{ASU,bischoff2018morita}, the natural generalization of the classical K\"ahler class involves fixing not just the complex structure $J$ but also the associated holomorphic Poisson structure $\poiss$, as follows.

\begin{defn}\label{d:GK-class} 
\begin{enumerate}
\item Let $X = (M,J)$ be a complex manifold and let $\poiss$ be a holomorphic Poisson structure on $X$.  
A symplectic GK structure $F$ on  $X$ is called $\poiss$-\emph{compatible} if the $J$-holomorphic Poisson tensor associated to $(g, b, I, J)$ via (\ref{e:PoissonJ}) is equal to $\poiss$.  For a fixed holomorphic Poisson structure $\poiss$ on $X$, we denote by
\[
{\GK}_{\poiss} \subset \Omega^{2,cl}(M,\mathbb R)
\]
the space of $\poiss$-compatible symplectic GK structures on $X$.  

\item A de Rham class $\alpha \in H^2(X, \mathbb R)$ is \emph{$\poiss$-compatible} if it contains a $\poiss$-compatible symplectic GK structure, i.e.~$\GK_{\poiss} \cap \alpha \neq \varnothing \subset \Omega^{2,cl}(M,\mathbb R)$.  We denote by
\[
\GK_{\poiss,\alpha} \subset \GK_{\poiss}
\]
the space of symplectic GK structures $F$ on $(X,\sigma)$ such that $[F]=\alpha$. This space (or more precisely, a connected component thereof) is the appropriate generalization, for symplectic GK structures, of the K\"ahler class in classical K\"ahler geometry.
\end{enumerate}
\end{defn}

\begin{rem} \label{rem:Poisspreserved} By definition, a generalized K\"ahler class fixes the holomorphic Poisson structure $(J, \sigma)$.  Furthermore, it follows from \cite[Proposition 2.17]{ASU} that in a connected component of $\GK_{\poiss,\alpha}$, the second complex structure $I$ deforms via the action of the $\sigma$-Hamiltonian diffeomorphism group, hence the isomorphism class of the holomorphic Poisson structure $(I, \sigma_-)$ is preserved.
\end{rem}

\subsection{New symplectic GK structures from old}

In what follows we will make use of some useful constructions of generalized K\"ahler metrics.  The first construction is analogous to the classical statement that the restriction of a K\"ahler metric to a complex submanifold is again K\"ahler:
\begin{lemma}\label{lem:GK-restrict}
Let $(X,\poiss)$ be a holomorphic Poisson manifold, and let $F$ be a $\poiss$-compatible symplectic GK structure on $X$.  Suppose that $i : (Y,\eta) \hookrightarrow (X,\poiss)$ is a holomorphic Poisson immersion.  Then $i^*F$ is an $\eta$-compatible symplectic GK structure on $Y$, and $i$ induces a holomorphic Poisson immersion $i_{-} : (Y_-,\eta_-) \to (X_-,\poiss_-)$.
\end{lemma}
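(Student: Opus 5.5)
The plan is to check everything pointwise along $Y$, and to use the relation \eqref{eq:IJQF} together with the tangency of the real Poisson tensor to $Y$. Write $F_Y := i^*F$, $J_Y$ for the complex structure of $Y$ (so $di\circ J_Y = J\circ di$), and $Q := \mathrm{Re}\,\poiss$, $Q_Y := \mathrm{Re}\,\eta$. Since $i$ is a holomorphic Poisson immersion, at each point of $Y$ the bivector $\poiss$ is the pushforward of $\eta$. Taking real parts, $Q = di(Q_Y) \in \Wedge^2 di(TY)$. In particular, the map $Q : T^*M \to TM$ takes values in $di(TY)$. Moreover, for $\xi \in T^*M$, the vector $Q(\xi)$ depends only on the restriction $i^*\xi$, and equals $di(Q_Y(i^*\xi))$. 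We work locally, identifying $TY$ with its image.

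\emph{Step 1: $F_Y$ is symplectic and $g_Y := -(F_YJ_Y)^{\rm sym}$ is Riemannian.} Clearly $dF_Y = i^*dF = 0$. For $v\in TY$ we have $-F_Y(v,J_Yv) = -F(v,Jv) = g(v,v) > 0$ when $v \neq 0$. Hence $F_Y$ tames $J_Y$, which forces $F_Y$ to be nondegenerate, and $g_Y = i^*g$ is positive definite.

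\emph{Step 2: $TY$ is $I$-invariant, and $I_Y := -F_Y^{-1}J_Y^*F_Y$ equals $I|_{TY}$.} For $v\in TY$, relation \eqref{eq:IJQF} gives $Iv = Jv - Q(Fv)$. Here $Jv\in TY$, and $Q(Fv)\in TY$ by the tangency above, so $Iv \in TY$. Next, from $I = -F^{-1}J^*F$ we get $F(Iv,w) = -F(v,Jw)$ for all $w$. Restricting to $w\in TY$ gives $F_Y(Iv,w) = -F_Y(v,J_Yw)$. Since $F_Y$ is nondegenerate and $Iv\in TY$, this identity characterizes $I_Yv$, so $I_Y = I|_{TY}$. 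Consequently $Y$ is an $I$-complex submanifold. The Nijenhuis tensor of $I_Y$ is the restriction of that of $I$, so $I_Y$ is integrable and $(F_Y,J_Y)$ is a symplectic GK structure.

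\emph{Step 3: the induced Poisson tensors.} Let $Q'_Y$ denote the real Poisson tensor of $(F_Y,J_Y)$. Relation \eqref{eq:IJQF} on $Y$ gives $I_Y = J_Y - Q'_YF_Y$. On the other hand, Step 2 together with the tangency of $Q$ gives
\[
I_Yv = J_Yv - Q_Y(i^*(Fv)) = J_Yv - Q_Y(F_Yv).
\]
Since $F_Y$ is invertible, it follows that $Q'_Y = Q_Y = \mathrm{Re}\,\eta$. The $J_Y$-holomorphic Poisson tensor is $Q'_Y - \i J_YQ'_Y = \eta$, so $F_Y$ is $\eta$-compatible.

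Finally, Step 2 shows that $di$ intertwines $I_Y$ and $I$, so $i$ is holomorphic as a map $Y_- \to X_-$; it remains an immersion. Also, $\eta_- = Q_Y - \i I_YQ_Y$ pushes forward to $Q - \i IQ = \poiss_-$, so $i_-$ is a Poisson immersion.

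The only point that needs care is the tangency claim: that a Poisson immersion forces $Q(\xi)$ to lie in $TY$ and to depend only on $i^*\xi$. This is what makes Step 2 work. The remaining steps are linear algebra.
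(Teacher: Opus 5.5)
Your proof is correct and follows essentially the same route as the paper: you work locally, then combine \eqref{eq:IJQF} with the tangency of $Q=\mathrm{Re}\,\poiss$ to $Y$ to see that $TY$ is $I$-invariant, and then restrict $F$. Your version is slightly more complete than the paper's: you check explicitly that $-F_Y^{-1}J_Y^*F_Y$ coincides with $I|_{TY}$ (via the restricted identity $F(Iv,w)=-F(v,Jw)$) and that the induced Poisson tensors are $\eta$ and $\eta_-$, whereas the paper leaves both points implicit in the phrase that $i^*F$ ``tames both complex structures''.
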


\begin{proof}
Let $M$ and $N$ be the real manifolds underlying $X$ and $Y$, respectively.
The problem is local, so we may assume without loss of generality that $N$ is an embedded submanifold of $M$, and identify $TN$ with a subbundle of $TM|_N$.  
Let $J$ be the integrable almost complex structure on $M$ defining the complex structure manifold $X$, let $I$ be the other complex structure of the GK pair, and let $Q$ be the associated real Poisson structure.  By hypothesis, $TN \subset TM|_N$ is preserved by $J$, and $Q|_N \in \wedge^2 TN$.  It follows from \eqref{eq:IJQF} that $TN$ is also preserved by $I$, giving a second complex structure on $N$.  Moreover, since $F$ tames both $I$ and $J$, and the restriction of a tame symplectic structure to a complex subspace is tame, we deduce that $i^*F$ tames both complex structures on $Y$, giving the desired GK structure.
\end{proof}

The second construction is much more subtle: it uses a holomorphic Poisson structure to deform a K\"ahler metric into a generalized K\"ahler one \cite{goto2021unobstructed,Gualtieri-Poisson,Gualtieri-Hamiltonian}; this is a special case of the more general stability theorem for generalized K\"ahler manifolds in \emph{op.~cit.} (see also \autoref{sec:gualtieri-map} below). The precise statement is as follows.

\begin{Theorem}\label{thm:gg-def}
    Let $(X,\poiss)$ be a compact holomorphic Poisson manifold with $H^{0,2}(X, \C)=0$, and let $\omega$ be a K\"ahler form on $X$.  Then there exists a constant $\epsilon > 0$ and a family of symplectic-type GK structures $F(t) \in \GK_{t\poiss}$ that depends analytically on $t \in (-\epsilon,\epsilon)$ and is invariant under the action of all biholomorphisms of $X$ that fix $\poiss$ and $\omega$.  Moreover, the Kodaira--Spencer class of the $F(t)$-conjugate family of complex manifolds $X_-(t)$ is given by
    \[
    \left.\frac{d}{dt}\right|_{t=0}[X_t] = [\poiss\omega] \in H^1(X, T^{1,0}_X)
    \]
\end{Theorem}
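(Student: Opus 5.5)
We follow Gualtieri's Hamiltonian-flow construction. Let $Q_t := t\,\mathrm{Re}(\poiss)$, a real Poisson bivector, with $Q_t = t Q$. The basic identity is that if $F$ is a symplectic form and $J$ a complex structure, then $I := -F^{-1}J^*F$ is integrable and $I = J - QF$ whenever $F$ is obtained from a K\"ahler form by a suitable ``Poisson flow''. Concretely, we look for a time-dependent function $\varphi_s$, $s\in[0,t]$, such that its $Q$-Hamiltonian vector field $X_s := Q(d\varphi_s)$ satisfies
\[
F(t) := \Phi_t^*\omega, \qquad \text{where } \Phi_s \text{ is the flow of } X_s .
\]
We then set $I(t) := \Phi_t^* J$. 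Since $\Phi_t$ is a diffeomorphism, $I(t)$ is automatically integrable and $F(t)$ is automatically symplectic in the class $[\omega]$. The remaining condition is the algebraic compatibility $I(t) = J - tQF(t)$, equivalently $F(t)^{-1}$ having the correct $J$-type decomposition.

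The first step is to rewrite this compatibility as an equation. Differentiating in $t$ and using $\frac{d}{dt}\Phi_t^*\omega = d(\Phi_t^*\iota_{X_t}\omega)$, Gualtieri shows that it is equivalent to requiring that $F(t)^{-1}$ has $(2,0)+(0,2)$-part (with respect to $J$) equal to $-\tfrac{t}{4}\,\mathrm{Re}$-type multiples of $\poiss$, i.e.\ $\big(F(t)^{-1}\big)^{2,0} = c\, t\,\poiss$. Expanding $F = \omega + t\,\dot F + O(t^2)$, the first-order condition becomes
\[
(\omega^{-1}\dot F\,\omega^{-1})^{2,0} = c\,\poiss,
\]
i.e.\ $\dot F^{0,2} = c\,\omega\poiss\omega$ up to sign. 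The $(0,2)$-form $\omega\poiss\omega$, viewed as $\bar\poiss$ contracted twice with $\omega$, is $\bar\partial$-closed because $\poiss$ is holomorphic. Since $H^{0,2}(X)=0$, it is $\bar\partial$-exact: $\omega\poiss\omega = \bar\partial \beta$ with $\beta\in\Omega^{0,1}$. Then $\dot F = d(\beta+\bar\beta)$ gives the linear solution, and $\beta$ is chosen canonically (the $L^2$-minimal solution via the $\bar\partial$-Green operator of $\omega$), which makes the construction equivariant.

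The second step is the nonlinear equation at all orders. We formulate the full condition as $\mathcal{E}(t, \beta)=0$ for a map on H\"older spaces $C^{k,\alpha}$ of $(0,1)$-forms modulo $\bar\partial$-exact ones, with $F = \omega + d(\beta+\bar\beta)$ plus higher terms. The linearization in $\beta$ is $\bar\partial$ restricted to the orthogonal complement of $\ker\bar\partial$, onto the $\bar\partial$-closed $(0,2)$-forms. Its surjectivity is exactly $H^{0,2}=0$ together with Hodge theory.

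The main obstacle is here: the nonlinear error must land in $\bar\partial$-closed forms, so that the implicit function theorem applies on the closed subspace. Integrability of $\Phi_t^*J$ handles this. Alternatively, one can follow Gualtieri's generalized-geometry route: deform the generalized complex structure $\mathbb{J}_\omega$ by the B-field/$\beta$-transform $e^{t\poiss}$, which remains integrable since $[\poiss,\poiss]=0$. One then uses stability of the generalized K\"ahler condition (Goto, Gualtieri), whose obstruction lies in $H^{0,2}$, and solves via Kuranishi-type analysis, which is analytic in $t$. Elliptic regularity upgrades the solutions to smooth ones.

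For the equivariance statement, any biholomorphism fixing $\poiss$ and $\omega$ preserves all the data and the canonical choices: the Green operator and the minimal-norm normalization. By uniqueness in the implicit function theorem, it therefore fixes $F(t)$.

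For the Kodaira--Spencer class, $\frac{d}{dt}I(t)|_{0} = \mathcal{L}_{X_0}J - QF$-term, and the Lie derivative term is trivial in cohomology. Hence the class is $-\frac{d}{dt}(tQ F(t))|_0 = -Q\omega$. Taking the $(0,1)$-form valued in $T^{1,0}$ part gives $[\poiss\omega]\in H^1(X,T^{1,0}_X)$, up to the normalization of signs fixed in \eqref{e:PoissonJ}.
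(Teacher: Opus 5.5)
\textbf{Verdict: there is a genuine gap.} Your main line rests on an ansatz that cannot produce the family in the theorem, and the key closedness step is missing.

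\textbf{The flow ansatz fails.} If $F(t)=\Phi_t^*\omega$ and $I(t)=\Phi_t^*J$, then $F(t)$ is \emph{compatible} with $I(t)$, not merely taming. But a symplectic GK structure has $I=-F^{-1}J^*F$, and a direct computation gives $I^*FI=FJF^{-1}J^*F$. So $F$ is $I$-compatible only if it is $J$-compatible, in which case $I=J$ and $tQF(t)=0$: only the K\"ahler solution survives. Independently, $(M,\Phi_t^*J)\cong X$ has vanishing Kodaira--Spencer class. Yet differentiating \eqref{eq:IJQF} at $t=0$ shows that any family in $\GK_{t\poiss}$ with $F(0)=\omega$ has $\dot I(0)=-Q\omega$. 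Its $T^{1,0}$-valued $(0,1)$-part equals $\poiss\omega$ up to a normalization constant. Your ansatz would therefore force $[\poiss\omega]=0$. This class is non-zero in general, e.g.\ for the second Hirzebruch surface with its cotangent Poisson structure, whose Hamiltonian deformation is $\PP^1\times\PP^1$ (Example~\ref{p:F_2-poisson-stability}). Your Kodaira--Spencer computation silently adds a ``$QF$-term'' that the ansatz does not contain. The correct argument is just the derivative of $I(t)=J-tQF(t)$; the Lie-derivative term should be dropped. In the construction the paper relies on, $J$ is never moved: $F(t)$ is built directly and $I(t)$ is defined algebraically. What survives from your proposal is:
\begin{enumerate}
\item the reformulation $(F(t)^{-1})^{2,0}=c\,t\,\poiss$, which is \eqref{main} multiplied on both sides by $F^{-1}$;
\item your first-order step, which is exactly the $n=1$ step recalled in the proof of Lemma~\ref{l:gualtieri}.
\end{enumerate}

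\textbf{The missing step is the one you flag yourself.} The linearization $\beta\mapsto\bar\partial\beta$ maps onto $\ker\bar\partial\subset\Omega^{0,2}$ only. Both the implicit function theorem and the order-by-order solution therefore need the nonlinear terms to give $\bar\partial$-closed $(0,2)$-forms; at order $n$ this reads $\bar\partial\big(\sum_{i+j=n-1}F_iQF_j\big)^{0,2}=0$. ``Integrability of $\Phi_t^*J$'' supplies nothing here: it holds for every diffeomorphism and puts no constraint on $\beta$. The missing input is the Poisson condition $[\poiss,\poiss]=0$, which your main line never uses. Gualtieri rewrites the equation as $\psi(F)^{0,2}=0$ with $\psi(F)=(1+tF\poiss_0)^{-1}F$. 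He then shows that $\psi(F)$ satisfies a Maurer--Cartan equation for the Koszul bracket of $\poiss$, and the $\bar\partial$-closedness follows from this. The paper itself shows this input is indispensable, in \S\ref{sec:gualtieri-map}: for non-Poisson $\poiss\in W$ the same Hodge-theoretic recursion still produces a $J$-taming $F$, but it is no longer expected to be closed. The same holds for the implicit function theorem for $E_{\sigma,\varphi}(\Theta)$, whose linearization $2\sqrt{-1}\,\overline{\square}$ is an isomorphism (Lemma~\ref{l:Frechet}).

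\textbf{On your fallback.} Deforming by $t\poiss$ and invoking the stability theorems of Goto and Gualtieri is essentially the citation that constitutes the paper's proof. Note that it is the complex-type structure $\mathbb{J}_J$, not $\mathbb{J}_\omega$, that is deformed by $t\poiss$. With that citation in place, your equivariance argument by uniqueness in a gauge-fixed problem (e.g.\ $\beta=\bar\partial^*\overline{\mathbb G}F^{0,2}$) is an acceptable substitute for the paper's remark that each Taylor coefficient $F_n$ is produced functorially from $(\omega,\poiss)$.
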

\begin{proof}
    The proof of this result is given in \cite{goto2021unobstructed,Gualtieri-Poisson,Gualtieri-Hamiltonian}, without mention of invariance under automorphisms.  For the latter, we simply observe that in \cite{Gualtieri-Hamiltonian}, an explicit Taylor expansion $F(t) = F_0 + t F_1 + \cdots$ is constructed inductively, with each term $F_k$ obtained from the previous ones by applying a map that depends functorially on the K\"ahler structure and the holomorphic Poisson structure.  Hence the construction is manifestly invariant under all symmetries of $(X,\poiss,\omega)$.
\end{proof}

The formula for the Kodaira--Spencer class in \autoref{thm:gg-def} is the analogue, for degree-one cohomology, of the map $ \cO_X \to T^{1,0}_X$ assigning to each function its Hamiltonian vector field with respect to the Poisson structure.  For this reason, it is given the following name in \cite{Gualtieri-Hamiltonian}:

\begin{defn}\label{def:ham-def}
    The family $(X_-(t),\poiss_-(t),\alpha)$ of Poisson K\"ahler manifolds induced by \autoref{thm:gg-def} is called the \emph{Hamiltonian deformation} of $(X,\poiss,\alpha)$.
\end{defn}

\subsection{Generalized scalar curvature}
Let $(F,J)$ be a symplectic GK structure on a manifold $X$ of dimension $2n$.  We denote by
\[
dV_F = \frac{F^n}{n!} \qquad dV_g := \sqrt{\det g}
\]
the volume forms associated to the symplectic structure $F$ and the Riemannian metric $g=-(FJ)^{\rm sym}$. (Here and throughout, we equip $M$ with the orientation defined by $F$, which determines the sign of the Riemannian volume form.)  Unlike in the K\"ahler case, the ratio of these two volume forms may be non-constant.  We denote its logarithm by
\begin{align}
f := \log\left(\frac{dV_F}{dV_g}\right). \label{eq:vol-ratio}
\end{align}
The function $f$, by definition of volume weight, is referred to in physics literature as the dilaton.

\begin{defn}
The \emph{generalized scalar curvature} of a symplectic GK structure $(F,J)$ is the function
\begin{equation}\label{eq:Gscal} \Gscal{(F, J)} := \Scal_g  -\tfrac{1}{12}|db|_g^2 + 2\Delta_g f - |df|^2_g, \qquad  f := \log\left(\frac{dV_F}{dV_g}\right), \end{equation}
where $\Scal_g$ denotes the scalar curvature of the associated Riemannian metric, $b:= -(FJ)^{\rm skew}$ is the torsion $2$-form, $\Delta_g = d^*d$ is the corresponding Laplace operator, and $f$ is defined as in \eqref{eq:vol-ratio}.
\end{defn}

\begin{defn}
A \emph{constant scalar curvature generalized K\"ahler} (or \emph{cscGK}) structure of symplectic type is a symplectic GK structure $(F,J)$ whose generalized scalar curvature $\Gscal(F, J)$, as defined in \eqref{eq:Gscal}, is constant.
\end{defn}

\subsection{Higher regularity of symplectic cscGK structures}

In this subsection we establish a bootstrapping result that can be used to upgrade the regularity of cscGK structures under natural geometric hypotheses.  This result will be useful in the proof of our main result, where we will first produce metrics that are, a priori, only finitely differentiable, and then apply the bootstrapping result to deduce that they are, in fact, smooth.  More precisely, we will prove the following result below.

\begin{Theorem} \label{t:schauder} Let Let $M^{2n}$ be a smooth manifold, and let $(F,J)$ be a symplectic cscGK structure on $M$ of regularity $C^{2 + \lambda}$ for some $\lambda > 0$.  Suppose that there exists a constant $\Lambda > 0$ such that
\begin{align*}
    \sup_M \{ \inj_g^{-2}, \brs{\Rm_g} \} < \Lambda.
\end{align*}
where $\inj_g$ denotes the injectivity radius of the metric $g = -(FJ)^{\rm sym}$ and $\Rm_g$ denotes the Riemann tensor.  Then $(F,J)$ is smooth, and for all $k \in \mathbb N$ there exists $C_k (n,\Lambda)> 0$ such that
\begin{align*}
    \sup_M \left( \brs{\nabla^k \Rm} + \brs{\nabla^{k+1} H} \right) \leq C_k.
\end{align*}
\end{Theorem}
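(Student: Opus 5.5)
Work in harmonic coordinates for $g$ on balls of radius $r_0(n,\Lambda)$, which exist by the bounds on $\inj_g$ and $|\Rm_g|$. In these coordinates $g_{ij}\in C^{1,\beta}$ with uniform bounds, and $\Rc_g$ is given by the elliptic expression $-\tfrac12\Delta g_{ij}+Q(g,\partial g)$. The plan is to obtain an elliptic system coupling $g$, the torsion $H=db$ (equivalently $d^c_I\omega_I$), the dilaton $f$, and the Lee forms $\theta_I,\theta_J$. Once a gain of one derivative on each component is established, Schauder estimates bootstrap in the standard way.

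\textbf{Step 1: first-order structure.} For a symplectic-type GK structure one has $H=-d^c_J\omega_J=d^c_I\omega_I$. The Lee forms $\theta_I=-d^*\omega_I\circ I$ and $\theta_J$ are linked to the dilaton: since $dV_F$ is parallel in the appropriate sense, the Bismut-type identities from \cite{ASU} should give $\theta_I+\theta_J=2\,df$ (or a similar combination) and $\theta_I-\theta_J$ expressed algebraically in terms of $Q$ and $H$. The aim of this step is to write $df$, $H$ and $\theta_{I,J}$ as first derivatives of $(g,I,J)$. Since $J$ is integrable and fixed, $I=J-QF$, and $F$ is closed, it follows that $dH=0$ and $d^*H=$ (Lee-form contractions of $H$). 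These are the pluriclosed-type identities $d^*_gH=\iota_{\theta}H$-like relations valid in the GK setting.

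\textbf{Step 2: Ricci and the scalar equation.} Use the GK Ricci identities (the Bismut Ricci form of each complex structure equals $-\tfrac12 dJd\log(\text{volume ratio})$-type terms) to express
\[
\Rc_g-\tfrac14 H^2+\nabla^2 f
\]
through $d\theta_I$, $d\theta_J$, and hence through $\nabla^2 f$ plus lower-order terms. Tracing gives $\Gscal=\mathrm{const}$ as a scalar equation $2\Delta_g f-|df|^2=c-\Scal_g+\tfrac1{12}|H|^2$. I expect to couple this with a second scalar equation coming from the divergence of the Lee form, which links $\Scal_g$ to $\Delta f$. Together these yield an equation $\Delta_g f=\text{(first-order in }f,H)+\text{const}$. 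Elliptic theory then gives $f\in C^{k+2,\beta}$ whenever $g,H\in C^{k,\beta}$.

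\textbf{Step 3: bootstrap.} The system is
\[
\Delta_g g_{ij}=-2\Rc_{ij}+Q(g,\partial g),\qquad \Rc=\tfrac14H^2-\nabla^2 f+\cdots,\qquad \Delta_{d}H=d\,d^*H=d(\text{Lee}\lrcorner H),
\]
together with $\Delta f$ from Step 2. With $g\in C^{k,\beta}$, $H\in C^{k-1,\beta}$ and $f\in C^{k,\beta}$, the right-hand sides lie in the appropriate spaces to gain one derivative in each. Induction then gives smoothness. The uniform bounds $C_k(n,\Lambda)$ follow from the uniform harmonic radius and interior Schauder estimates, since the starting $C^{2+\lambda}$ norms of $F,J$ are controlled in harmonic coordinates by the curvature bound.

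\textbf{Main obstacle.} The hardest step is Step 2: deriving from $\Gscal=\mathrm{const}$ an equation for $f$ whose top-order part is elliptic and whose remainder does not contain $\Rm$ or $\nabla H$ at the same order as the unknown. The scalar curvature equation alone involves $\Scal_g$, which is second order in $g$, so $f$ and $g$ must be decoupled. I would first try to use the identity $\Scal_g-\tfrac1{12}|H|^2=2\,d^*\theta+\dots$ (a scalar-curvature formula for the Bismut connection), so that $\Gscal$ becomes $\Delta f$ plus a divergence of Lee forms expressible as first derivatives of $f$. This would give a pure Laplace equation for $f$ with lower-order coefficients.
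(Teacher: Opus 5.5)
\textbf{There is a genuine gap, in Steps 2--3.} You assume that $\Rc_g-\tfrac14H^2+\nabla^2 f$ is determined by $\nabla^2 f$, $H$, the Lee forms and lower-order terms, and likewise $d^*H$ and $\Scal_g-\tfrac1{12}|H|^2$. This is false.

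The correct identity is $\rho_B(I\cdot,\cdot)=\Rc^{H,f}+\nabla(\theta_I-df)$. Its symmetric part expresses $\Rc_g-\tfrac14H^2+\nabla^2f$ through the Bismut Ricci form $\rho_B$ and $\nabla(\theta_I-df)$. Here $\rho_B$ is a genuine curvature quantity, second order in $(g,I,J,b)$. It is the curvature of a connection on the anticanonical bundle, so it represents $2\pi c_1$. It therefore cannot be $-\tfrac12 dJd$ of the globally defined dilaton $f$, since that form is exact, and $c_1\neq 0$ in the Fano case.

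The K\"ahler case $I=J$ makes the failure transparent. There $H=0$, $f$ is constant and $\theta_I=\theta_J=0$. Your system would then say $\Rc_g$ is lower order, and every $C^{2,\beta}$ K\"ahler metric would bootstrap to smoothness. Also, your Step 2 uses $\Gscal=\mathrm{const}$ only to derive a scalar equation for $f$. In the K\"ahler case that equation carries no information, so the cscK condition never enters the equation for $g$.

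The same defect affects your other equations:
\begin{enumerate}
\item $d^*H=2\rho_B^{2,0+0,2}(I\cdot,\cdot)-d\theta_I+i_{\theta_I^\sharp}H$ is not algebraic in $(\theta,H)$. Hence $\Delta_d H=dd^*H$ contains $D\rho_B$ and gains no derivative without separate control of $\rho_B$.
\item $\Scal_g$ differs from $d^*\theta_I$ plus $|H|^2$ terms by the trace of $\rho_B(I\cdot,\cdot)$, which is again second order.
\item The Lee forms are not derivatives of $f$. Only $(I+J)\,df=I\theta_I+J\theta_J$ holds, so $\theta_I-\theta_J$ is an independent unknown.
\end{enumerate}

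\textbf{What is missing} are the two places where constancy of $\Gscal$ genuinely enters the paper's elliptic system (Proposition~\ref{p:cscGKelliptic}).

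First, one has $\theta_I^\sharp-\theta_J^\sharp=\sigma\,d\Phi$ with $\Phi=\log(F_+^n/F_-^n)$, together with Goto's formula $\Gscal=\tr_{F_+}dF_+d\Phi$. This exhibits $\Gscal$ as a Laplacian of $\Phi$ and gives $\Delta(\theta_I^\sharp-\theta_J^\sharp)=O(d\Gscal)+Q^2$ (with $Q^i$ denoting lower-order terms, as in the paper). Combining this with $(I+J)\,df=I\theta_I+J\theta_J$ and $d^*\theta_I=\tfrac16|H|^2-|\theta_I|^2$ yields the equations for $f$ and $\theta_I$. This is the correct form of your Step 2.

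Second, $\rho_B$ must be treated as an unknown with its own Hodge system. One has $d\rho_B=0$. Taking the divergence of the identity above and using the twisted contracted Bianchi identity $\divg^{H,f}\Rc^{H,f}=\tfrac12 d\Gscal$ gives $d^*\rho_B=O(d\Gscal)+O(D^2(\theta_I-df))+Q^2$. This is the analogue of the K\"ahler identity expressing $d^*\rho$ through $d^c\Scal$.

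Only with both ingredients does the chain $\theta_I-\theta_J\to f\to\theta_I\to\rho_B\to H\to\Rc_g\to g$ close up.

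Separately, your claim that the $C^{2+\lambda}$ norms of $(F,J)$ are controlled by the curvature and injectivity-radius bounds is unjustified. Those bounds only give $C^{1,\beta}$ harmonic coordinates for $g$. The uniform constants $C_k(n,\Lambda)$ need an additional blow-up argument, as indicated in the paper.
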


\begin{cor}
If $M$ is compact smooth manifold, and $\lambda > 0$, then every symplectic cscGK structure on $M$ of regularity $C^{2+\lambda}$ is smooth.
\end{cor}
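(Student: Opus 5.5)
The corollary follows from \autoref{t:schauder} once we check its hypotheses, so the plan is to verify them on a compact manifold and read off smoothness.

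Let $M$ be compact and let $(F,J)$ be a symplectic cscGK structure of class $C^{2+\lambda}$. The metric $g=-(FJ)^{\rm sym}$ is then a $C^{2+\lambda}$ Riemannian metric on the compact manifold $M$. To make sense of "regularity $C^{2+\lambda}$" in the curvature hypothesis, we read it as in the statement of \autoref{t:schauder}: $g$ and its curvature are defined with at least the regularity needed there. In particular $\Rm_g$ is $C^{\lambda}$, hence continuous on $M$, so $\sup_M|\Rm_g|<\infty$. If the convention gives slightly lower regularity, we pass to the weak formulation implicit in \autoref{t:schauder}. The torsion $H=db$ is likewise controlled.

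For the injectivity radius, we use that a $C^{2+\lambda}$ metric has a $C^{1+\lambda}$ Levi-Civita connection and a locally well-defined $C^{1}$ exponential map. Compactness together with the curvature bound then gives a positive lower bound on $\inj_g$. One route is Klingenberg's lemma: $\inj_g\geq\min\{\pi/\sqrt{K_{\max}},\tfrac12\ell\}$, where $\ell$ is the length of the shortest closed geodesic, and $\ell>0$ on a compact manifold. The simpler route is continuity of $\inj_g$ on compact $M$. Either way we can choose $\Lambda>\max\{\sup_M \inj_g^{-2},\sup_M|\Rm_g|\}$.

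With this choice of $\Lambda$, \autoref{t:schauder} applies and gives smoothness of $(F,J)$, together with the $C^k$ bounds. The only delicate point is justifying injectivity radius theory at low regularity. If that is problematic, I would instead apply \autoref{t:schauder} locally: its proof is a local bootstrapping argument, and small coordinate balls, of radius less than the harmonic radius, suffice to replace the global injectivity radius hypothesis.
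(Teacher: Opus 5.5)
Your proposal is correct and matches the paper, which states the corollary without proof as an immediate consequence of Theorem~\ref{t:schauder}. On compact $M$, a $C^{2+\lambda}$ structure gives a $C^{2+\lambda}$ metric with $C^{\lambda}$ (hence bounded) curvature and positive injectivity radius, so a suitable $\Lambda$ exists; your fallback remarks about weak formulations and local application are unnecessary.
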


Before proving \autoref{t:schauder}, we recall some notation.  Let $(g,b,I,J)$ be a generalized K\"ahler structure of symplectic type.  We denote by   \[
\theta_I,\theta_J \in \Omega^1(M,\mathbb R)
\]
the associated Lee forms, defined as the trace of the torsion of the Chern connections for the Hermitian manifolds $(M,I)$ and $(M,J)$, respectively.
We denote by
\[
\rho_B \in \Omega^2(M,\mathbb R)
\]
the Ricci form of the Bismut connection, by
\[
\Rc^g \in S^2T^*M
\]
the Ricci tensor of $g$, and by
\[
H = db \in \Omega^3(M,\mathbb R)
\]
the exterior derivative of $b$.

The main point of the proof is to establish an elliptic system that includes the Ricci curvature.  These arguments are inspired by the fundamental aspects of related arguments \cite{CW,TV}, although we do not attempt here the sharper geometric regularity theorems shown therein.  In the cscK case this system is easily obtained through differential Bianchi identities.  In the cscGK case this strategy is complicated by the presence of torsion, as well as the weight function $f$ in the definition of scalar curvature.  However one still expects an elliptic system due to the existence of local generalized K\"ahler potentials.  Through careful differential identities we establish such an elliptic system in the next proposition.  In the statement we adopt the following ad-hoc notation: a term is denoted $O(T)$ if it is expressible as a linear map acting on a tensor $T$.  Furthermore, a term $Q^i$ is a tensor built from the generalized K\"ahler data containing terms which in coordinates contain no more than $i$ derivatives of the data $(g, I, J, b)$.  This is generic notation in the sense that the $Q^i$ terms differ from line to line.

\begin{prop} \label{p:cscGKelliptic} Let $(F,J)$ be a symplectic GK structure on a manifold $M$.  Then we have
\begin{align} \label{f:reg1}
    \gD (\theta_I^{\sharp} - \theta_J^{\sharp}) =&\ O(d \Gscal) + Q^2\\ \label{f:reg2}
    \gD f =&\ O(D (\theta_J - \theta_I)) + Q^1\\ \label{f:reg3}
    \gD \theta_I =&\ O(D^2(\theta_J - \theta_I) + Q^2\\ \label{f:reg4}
    \gD \rho_B =&\ O(D^3(\theta_I)) + O(D^4 f) + O(d \Gscal) + Q^3\\ \label{f:reg4.5}
    \gD H =&\ O(D \rho_B) + O(D^2(\theta_I - df)) + O(d \Gscal) + Q^2\\
    \label{f:reg5}
    \gD \Rc^g =&\ O(D^2 \rho_B) + O(D^3 H) + Q^3.
\end{align}
\begin{proof} To begin we work in the nondegenerate case, i.e. assuming $(I\pm J)$ are both non-degenerate and thus $(g, I, J)$ is symplectic with respect to either symplectic form $F_{\pm}=-2g(I\pm J)^{-1}$.  As we will derive universal tensor expressions, this suffices to treat the general case by the nondegenerate approximation method developed in \cite{AFSU}.  Recall here we set
\begin{align*}
    \Phi = \log \frac{F_+^n}{F_-^n},
\end{align*}
and it follows from \cite[Prop. 10.4]{Goto-moment} that
\begin{align} \label{f:reg10}
\Gscal = \tr_{F_+} d F_+ d \Phi.
\end{align}
Now recall \cite[Prop. 4.3]{apostolov2021the}, which implies that
\begin{align} \label{f:reg15}
    \theta_I^{\sharp} - \theta_J^{\sharp} = \sigma d \Phi,
\end{align}
and \cite[(3.4)]{ASU},  which says that
\begin{align*}
    \gD_{F _+} \Phi = \tr_{F_+} d F_+ g^{-1} d \Phi = \Delta \Phi - \tfrac{1}{2} \IP{d \log \det (I + J), d \Phi}.
\end{align*}
Using this identity and taking the Laplacian of equation (\ref{f:reg15}) proves (\ref{f:reg1}).

Next, we recall \cite[Proposition 4.3]{apostolov2021the} which implies that
\begin{align} \label{f:reg20}
    (I + J) d f = (I + J) d \log \det (I + J) = I \theta_I + J \theta_J = (I + J) \theta_I + J (\theta_J - \theta_I).
\end{align}
We also recall a general identity for pluriclosed structures (cf. \cite{ivanov2001vanishing}):
\begin{align} \label{f:reg30}
    d^* \theta_I =&\ \tfrac{1}{6} \brs{H}^2 - \brs{\theta_I}^2.
\end{align}
Applying $(I + J)^{-1}$ to (\ref{f:reg20}) and applying (\ref{f:reg30}) yields (\ref{f:reg2}).  Returning to (\ref{f:reg20}) we now express
\begin{align*}
    \theta_I = df - (I + J)^{-1} J (\theta_J - \theta_I),
\end{align*}
and (\ref{f:reg3}) follows from (\ref{f:reg1}) and (\ref{f:reg2}).

Turning now to the equation for $\rho_B$, we first recall the general identity (cf. \cite{ivanov2001vanishing})
\begin{align} \label{f:reg40}
    \rho_B(I, \cdot) = \Rc^H + \N \theta_I = \Rc^{H,f} + \N (\theta_I - df).
\end{align}
Taking the divergence of this equation, and using the twisted Bianchi identity
\begin{align*}
    \divg^{H,f} \Rc^{H,f} = \tfrac{1}{2} d R^{H,f} = \tfrac{1}{2} d \Gscal
\end{align*}
yields
\begin{align}
    d^* \rho_B = Q^2 + O(D^2(\theta_I - df)) + O(d \Gscal).
\end{align}
Recalling that $\rho_B$ is closed by the Bianchi identity and applying the Bochner formula then gives the claimed formula for $\rho_B$.

Now returning to (\ref{f:reg40}) we note it has the further consequence that
\begin{align*}
    d^* H = 2 \rho_B^{2,0 + 0,2}(I, \cdot) - d \theta_I + i_{\theta_I^{\sharp}} H.
\end{align*}
Using this, the fact that $H$ is closed, and the Bochner formula for three-forms gives (\ref{f:reg4.5}).  Returning now to equation (\ref{f:reg40}) we see that the equation (\ref{f:reg5}) for $\Rc^g$ follows formally.
\end{proof}
\end{prop}

With the above identities in place we sketch the standard bootstrapping argument to establish the desired higher regularity of cscGK structures.
\begin{proof}[Proof of \autoref{t:schauder}] We will first show the smoothness of the structure by an elliptic bootstrapping argument for the metric,  assuming it is already $C^{3+\lambda}$ in harmonic coordinates, as a direct consequence of Proposition \ref{p:cscGKelliptic}.  With this established, the theorem follows from a standard blowup argument.  So, with this assumption in place, using that $\Gscal$ is constant, equation (\ref{f:reg1}) implies that $\gD (\theta_I - \theta_J) \in C^{1+\lambda}$, hence $\theta_I - \theta_J \in C^{3+\lambda}$.  Then equation (\ref{f:reg2}) implies that $\gD f \in C^{2+\lambda}$, hence $f \in C^{4+\lambda}$.  Furthermore by equation (\ref{f:reg3}) we obtain that $\gD \theta_I \in C^{1+\lambda}$ hence $\theta_I \in C^{3+\lambda}$.  Then by equation (\ref{f:reg5}) we obtain that $\gD \Rc^g \in C^{\lambda}$, hence $\Rc^g \in C^{2+\lambda}$.  As we are in harmonic coordinates, it follows that $g \in C^{4+\lambda}$.  The bootstrap is complete and the argument above can be iterated to obtain that the metric is in fact $C^{\infty}$.
\end{proof}

\section{Test configurations and Poisson K-polystability}
\label{sec:k-stability}

In this section,  $X=(M, J)$ denotes a compact complex manifold and $\alpha \in H^{1,1}_{\rm dR}(X, \R)$ a deRham $(1,1)$-class containing a K\"ahler metric $\omega \in \alpha$. In the case when $\alpha = 2\pi c_1(L)$ for an ample holomorphic line bundle $L$, we say that $(X, L)$ is a smooth polarized variety.

\subsection{Automorphism groups}

We denote by $\Aut(X)$  the complex Lie group of holomorphic automorphisms of $X$.  It contains subgroups
\[
\Aut_{\rm red}(X) < \Aut_\circ(X) < \Aut(X,\alpha) < \Aut(X)
\]
where $\Aut(X,\alpha)$ is the subgroup fixing the K\"ahler class $\alpha$, while $\Aut_{\circ}(X)$ is the connected component of the identity, and $\Aut_{\rm red}(X)$ the group of reduced automorphisms, defined as the largest connected subgroup of $\Aut_\circ(X)$ that acts trivially on the Albanese torus 
\[
\Alb(X):=H^{1,0}(X, \C)^*/H_1(X,\Z).
\]
By work of Fujiki~\cite{Fujiki1978} and Lieberman~\cite{Lieberman1978}, $\Aut_{\rm red}(X)$ is an affine algebraic group.

The Lie algebra of ${\rm Lie}(\Aut_\circ(X))$ is identified with the space $H^0(X, T_X^{1,0})$ of holomorphic vector fields on $X$, and the ideal ${\rm Lie}(\Aut_{\rm red}(X)) < {\rm Lie}(\Aut_\circ(X))$ is identified with the subspace of vector fields whose vanishing set is nonempty; equivalently, they are the vector fields whose real part is Hamiltonian with respect to a K\"ahler form on $X$ (see \cite[Theorem 1]{lebrun1994deformation} and \cite{gauduchon-book}).

Moreover, by \cite[Theorem 3.5 and Corollary 3.7]{Fujiki1978}, the groups $\Aut_\circ(X,\alpha)$ and $\Aut_{\rm red}(X)$ have a common quotient, which is an affine algebraic group through which any compactifiable linear action of $\Aut_{\circ}(X)$ factors.  This applies, in particular, to the action of $\Aut_{\circ}(X)$ on the vector space $H^0(X,\mathcal{F})$ of global sections of any `natural' holomorphic bundle $\mathcal{F}$, such as the tangent bundle, forms, or tensors thereof; a compactification is obtained by viewing a section of the tensor bundle as a point in the Chow scheme of the compactification $\PP(\mathcal{F}\oplus \cO_X)$ of the total space of $\mathcal{F}$.  We will use this fact in the case $\mathcal{F} = \wedge^2 T^{1,0}_X$, so we record it as a lemma for future reference.
\begin{lemma}\label{lem:fujiki}
    The action of $\Aut_\circ(X)$ on $H^0(X, \wedge^\bullet T^{1,0}_X)$ factors through an algebraic action of a quotient of the affine algebraic group $\Aut_{\rm red}(X)$. Hence the $\Aut(X,\alpha)$-orbits in $H^0(X, \wedge^\bullet T^{1,0}_X)$ are locally closed subvarieties whose irreducible components are $\Aut_{\rm red}(X)$-orbits.
\end{lemma}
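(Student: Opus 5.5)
The plan is to combine the Fujiki structure theory recalled just above with the compactification argument already sketched in the text.

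\emph{Step 1: the action extends to a compactification.} The group $\Aut_\circ(X)$ acts on $\Wedge^\bullet T^{1,0}_X$ by pushforward of multivector fields. This action is linear on each fibre, covers the action on $X$, and preserves the grading. Hence it extends to a holomorphic action on the projective bundle $P:=\PP(\Wedge^\bullet T^{1,0}_X\oplus\cO_X)$, which is a compact K\"ahler manifold. A global section $s\in H^0(X,\Wedge^\bullet T^{1,0}_X)$ determines the closure of its graph, a cycle in $P$. This gives an injective map from $H^0(X,\Wedge^\bullet T^{1,0}_X)$ into the Barlet/Chow space $\mathcal C(P)$, and the map is equivariant for the induced action of $\Aut_\circ(X)$ on $\mathcal C(P)$. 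So the linear representation of $\Aut_\circ(X)$ on the finite-dimensional space $V:=H^0(X,\Wedge^\bullet T^{1,0}_X)$ is \emph{compactifiable} in Fujiki's sense. We may also compactify $V$ directly as $\PP(V\oplus\C)$, on which the action is meromorphic because it comes from the action on $P$.

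\emph{Step 2: apply Fujiki's results.} By \cite[Thm.~3.5, Cor.~3.7]{Fujiki1978}, any such compactifiable linear action of $\Aut_\circ(X)$ factors through a common affine algebraic quotient $G$ of $\Aut_{\rm red}(X)$. The Albanese part acts trivially: the quotient $\Aut_\circ(X)/\Aut_{\rm red}(X)$ is a compact torus, and a compact complex torus admits no nontrivial holomorphic linear representations, since such a representation has holomorphic matrix coefficients on a compact manifold, which must be constant. The resulting homomorphism $G\to GL(V)$ is then algebraic, by the meromorphy in Fujiki's sense. Combined, these give the first sentence of the lemma.

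\emph{Step 3: orbits of $\Aut(X,\alpha)$.} The group $\Aut(X,\alpha)$ has $\Aut_\circ(X)$ as a finite-index subgroup; finiteness of $\Aut(X,\alpha)/\Aut_\circ(X)$ is again due to Fujiki and Lieberman. Fix $s\in V$. By Step 2, the orbit $\Aut_\circ(X)\cdot s=G\cdot s$ is an orbit of an algebraic group acting algebraically, so it is locally closed and irreducible by Chevalley's theorem. Its image under the $\Aut_{\rm red}(X)$-action is the same set, because the action factors through $G$. The $\Aut(X,\alpha)$-orbit of $s$ is a finite union $\bigcup_i g_i G\cdot s$. Each element $g_i$ normalizes $\Aut_\circ(X)$ and acts linearly on $V$, so each $g_iG\cdot s=G\cdot(g_is)$ is again an $\Aut_{\rm red}(X)$-orbit of the same dimension. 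A finite union of equidimensional locally closed orbits of one algebraic group is locally closed: its closure minus the union consists of lower-dimensional orbits, which form a closed set. The irreducible components are then exactly the distinct $G\cdot(g_is)$.

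\emph{Main obstacle.} The delicate step is Step 1, verifying compactifiability, i.e.\ that the induced action on $\PP(V\oplus\C)$ is meromorphic. Here I would use that the graph cycles of sections form a subspace of $\mathcal C(P)$ whose closure is compact, by Fujiki's results on Barlet spaces of compact K\"ahler manifolds. I would also use that the action of $\Aut_\circ(X)$ on $\mathcal C(P)$ is meromorphic, since it is induced from a meromorphic action on $P$.
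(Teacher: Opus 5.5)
Your proposal is correct and is essentially the paper's argument: the paper also invokes Fujiki's Theorem~3.5 and Corollary~3.7 for the compactification furnished by the cycle (Chow) space of $\PP(\mathcal{F}\oplus\cO_X)$, and your Step~3 (finiteness of $\Aut(X,\alpha)/\Aut_\circ(X)$ plus equidimensionality of the translated orbits) spells out the orbit statement that the paper leaves implicit. The one inaccurate remark is your justification that the Albanese part acts trivially because a compact torus has no nontrivial holomorphic linear representations: the representation of $\Aut_\circ(X)$ does not descend to $\Aut_\circ(X)/\Aut_{\rm red}(X)$, so one would instead have to map the torus into the affine group obtained by dividing the normalizer of the Zariski-closed image of $\Aut_{\rm red}(X)$ by that image (which again uses Fujiki's algebraicity); this remark is in any case redundant once you cite Fujiki for the factorization.
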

If $\poiss \in H^0(X, \wedge^2 T^{1,0}_X)$ is a Poisson structure we denote by
\[
\Aut(X,\poiss,\alpha) < \Aut(X,\alpha)
\]
the stabilizer of $\poiss$, i.e.~the set of biholomorphisms of $X$ that preserve both the K\"ahler class $\alpha$ and the Poisson structure $\poiss$.

 In the polarized case, we denote by $\Aut(X, L)$ the group of bundle automorphisms of $L$ and by $\Aut_{\circ}(X, L)$ its connected component of identity; then $\Aut_\circ(X, L)/\C^* \cong \Aut_{\rm red}(X)$.  


%
%
\subsection{Test configurations}
In this section we introduce a natural generalization of the notion of test configurations for K\"ahler manifolds \cite{Be,BDL,BHJ, H, ChiLi,s-dyrefelt2, Tian} to the Poisson setting.

\begin{defn}
    Let $(X,\poiss)$ be a compact holomorphic Poisson manifold.  An \emph{pre-test configuration for $(X,\poiss)$} 
    is a tuple consisting of the following data:
    \begin{itemize}
        \item A normal compact analytic space $\tstX$
        \item A flat morphism $\tstX\to \PP^1$
        \item A Poisson structure $\Poiss$ on $\tstX$ tangent to the fibres 
        \item A $\C^*$-action on $\tstX$ by Poisson automorphisms covering the standard action on $\PP^1$,
    \end{itemize}
  such that all fibres $(\tstX,\Poiss)|_t$ for $0\neq t \in \PP^1$ are isomorphic to $(X,\poiss)$, and the action of $\C^*$ on the fibre over infinity is trivial.

  A \emph{morphism of pre-test configurations $(\tstX,\Poiss) \to (\tstX',\Poiss')$} is a $\C^*$-equivariant Poisson morphism $\tstX \to \tstX'$ that projects to the identity map on $\PP^1$.
\end{defn}

\begin{defn}
    Let $(X,\poiss,\alpha)$ be a compact Poisson cohomological K\"ahler manifold.    A \emph{Poisson K\"ahler test configuration} $(\tstX,\Poiss,\tstA)$ is a pre-test configuration $(\tstX,\Poiss) \to \PP^1$ of $(X,\poiss)$ equipped with a K\"ahler class $\tstA \in H^{1,1}(\tstX,\R)$ such that $(\tstX,\Poiss,\tstA)|_\infty$ is isomorphic to $(X,\poiss,\alpha)$.
  
A \emph{morphism of Poisson test configurations $(\tstX,\Poiss,\tstA)\to(\tstX',\Poiss',\tstA')$} is a morphism of pre-test configurations $\phi :(\tstX,\Poiss)\to(\tstX',\Poiss')$ such that $\phi^*\tstA = \tstA'$.
\end{defn}

In the case $\poiss=0$, we shall tacitly identify the pair $(\tstX,0)$ with $\tstX$ and the triple $(\tstX,0,\tstA)$ with $(\tstX,\tstA)$.  In this way, we obtain the classical notion of (pre-)test configuration of the underlying K\"ahler manifold $(X,\alpha)$.

\begin{rem}
    In the definitions, we do not specify the isomorphisms of the fibres with $(X,\poiss,\alpha)$; we merely require their existence.
\end{rem}

\begin{rem}
In some references treating the classical case $\poiss=0$, one finds an \emph{a priori} stronger requirement that the family be $\C^*$-equivariantly trivial at infinity, but this is, in fact automatic from the stated conditions on the fibres.  This can be deduced by the same argument as in Lemma \ref{prop:subgroups} below, by interchanging the roles of $0,\infty\in \PP^1$.
\end{rem}

\begin{rem}
In the special case in which  the K\"ahler classes involved are integral (resp.~rational), they correspond to ample line bundles (resp.~$\Q$-line bundles), so that the spaces involved are projective algebraic varieties and we recover the more classical notions of test configurations involving line bundles.  Namely, if $(X,L)$ is a polarized variety equipped with the K\"ahler class with $\alpha = 2 \pi c_1(L)$, a \emph{polarized test configuration} of exponent $r \in \Z_{>0}$ is a K\"ahler test configuration $(\tstX,\tstA)$ equipped with a holomorphic line bundle $\tstL$ on $\tstX$ such that $\tstA = \frac{2 \pi}{r} c_1(\tstL)$. 
\end{rem}



A (pre-)test configuration can be thought of as an equivariant degeneration from $X \cong \tstX|_\infty$ (with the trivial action of $\C^*$), to $X_0 := \tstX|_0$ (with \emph{some} action of $\C^*$).  An important role is played by those (pre-test) configurations for which $X_0$ is also a copy of $X$, so that the degeneration is not really a degeneration at all.
\begin{defn}
    A Poisson K\"ahler test configuration $(\tstX,\Poiss,\tstA)$ is called
    \begin{enumerate}
    \item a \emph{Poisson product test configuration} if $(\tstX,\Poiss,\tstA)|_0$ is isomorphic to $(X,\poiss,\alpha)$, ignoring the $\C^*$ action; and is
    \item \emph{trivial} if $(\tstX,\Poiss,\tstA)|_0$ is isomorphic to $(X,\poiss,\alpha)$ with the trivial action of $\C^*$.
    \end{enumerate}
When $\poiss=0$, we say the pair $(\tstX,\tstA)$ is a \emph{product} or \emph{trivial} K\"ahler test configuration, respectively.  Dropping the K\"ahler classes, we obtain the notion of a \emph{product} or \emph{trivial} pre-test configuration.
\end{defn}

Evidently we have the implications
\begin{gather*}
(\tstX,\Poiss,\tstA) \textrm{ is a trivial Poisson  test configuration} 
\\
\Downarrow
\\
(\tstX,\Poiss,\tstA) \textrm{ is a Poisson product test configuration }
\\
\Downarrow
\\
(\tstX,\tstA) \textrm{ is a product test configuration}
\\
\Downarrow
\\
\tstX \textrm{ is a product pre-test configuration}.
\end{gather*}
but the converses fail, in general: not every  product pre-test configuration admits a compatible K\"ahler class or Poisson structure, let alone one that makes it into a Poisson product test configuration.  These subtleties can be profitably understood in terms of one-parameter groups of automorphisms, as we now explain; see Proposition \ref{prop:poisson-1ps} below for a summary.



\subsection{Product test configurations vs.\ one-parameter subgroups} 
In this subsection, we recall the standard construction of pre-test configurations from  one-parameter groups of automorphisms, and then explain how the construction interacts with K\"ahler classes and Poisson structures, leading to a characterization of Poisson product test configurations.

\subsubsection{Pre-test configurations from one-parameter subgroups}
Let us denote by
\begin{align*}
U_0 &:= \PP^1\setminus \{\infty\} \cong \C & U_\infty &:= \PP^1 \setminus \{0\} \cong \C
\end{align*}
the standard charts on $\PP^1$.   Given a one-parameter subgroup 
\[
\rho : \C^* \to \Aut(X),
\]
equip the product $X \times U_0$ with the diagonal $\C^*$-action induced by $\rho$ and the standard action on $U_0$, and glue it to the trivial family $X \times U_\infty$ over $\C^* = U_0 \cap U_1$, using $\rho$ as the clutching function.  In this way we obtain a pre-test configuration for $X$, which we denote by
\[
X \rtimes_\rho \PP^1 \to \PP^1
\]
together with canonical equivariant isomorphisms
\begin{align*}
(X\rtimes_\rho\PP^1)|_0 &\cong (X,\rho)   & (X\rtimes_\rho\PP^1)|_\infty \cong (X,\mathrm{id}),
\end{align*}
i.e.~the $\C^*$ action on $X\rtimes_\rho \PP^1$ interpolates between the trivial action over infinity, and the action by $\rho$ over zero.  Thus, $X\rtimes_\rho \PP^1$ is a product pre-test configuration.  In fact, all product pre-test configurations arise in this way:

\begin{lemma}\label{prop:subgroups}
The following statements hold.
\begin{enumerate}
\item Let $\tstX \to \PP^1$ be a product pre-test configuration for $X$.  Then there exists a one-parameter subgroup $\rho : \C^* \to \Aut(X)$ and an isomorphism of pre-test configurations
\[
\tstX \cong X\rtimes_\rho \PP^1
\]
\item If $\rho,\tilde \rho :\C^* \to \Aut(X)$ are any one-parameter subgroups, then  restriction to the fibre at $\infty$ gives a bijection between isomorphisms of pre-test configurations $X\rtimes_\rho \PP^1 \to X\rtimes_{\tilde \rho}\PP^1$ and elements $\phi \in \Aut(X)$ such that the limit
    \[
    \lim_{t \to 0} \tilde \rho(t) \phi \rho(t)^{-1} \in \Aut(X)
    \]
    exists.   In particular, $X \rtimes_\rho \PP^1$ and $X \rtimes_{\tilde \rho} \PP^1$ are isomorphic if and only if  $\rho$ and $\tilde \rho$ are conjugate by an element of $\Aut(X)$.
\end{enumerate}
\begin{proof}
For the first statement, the ``if'' part is trivial.  For the converse, suppose that $\tstX\to \PP^1$ is a pre-test configuration such that $\tstX|_0$ is isomorphic to $X$.  Then $\tstX \to \PP^1$ is a proper holomorphic submersion with all fibres isomorphic; hence it is locally trivial by a theorem of Fischer--Grauert~\cite{FG}.  Combining a local trivialization near $0$ with the $\C^*$ action over $U_0\setminus\{0\}$ we deduce that $\tstX|_{U_0}$ is isomorphic to the trivial bundle $X\times U_0$ equipped with \emph{some} $\C^*$-action $\psi : \C^* \to \Aut(X\times U_0)$ covering the standard action on $U_0$.  Since $0 \in U_0$ is a fixed point, restriction to the fibre over zero gives a one-parameter subgroup $\rho:= \psi|_0 : \C^* \to \Aut(X)$.  We claim that there exists an automorphism of the family, i.e.~a gauge transformation $\phi : U_0 \to \Aut(X)$, conjugating $\psi$ to the diagonal action defined by $\rho$.  
 
 To this end, consider the trivial bundle $\Aut(X) \times U_0 \to \C$ equipped with the $\C^*$-action given by $\lambda \cdot (\gamma,z) = (\psi(\lambda,z) \gamma \rho(\lambda)^{-1},\lambda z)$.  Then a section $\phi : U_0 \to \Aut(X)$ is $\C^*$-equivariant if and only if it defines a gauge transformation conjugating $\psi$ to $\rho$.  Note that since $\psi|_0=\rho$, the point $(\mathrm{id}_X,0) \in \Aut(X)\times U_0$ is fixed by the action.  Since the fibre $\Aut(X)\times\{0\}$ is preserved, we have a $\C^*$-equivariant splitting
 \[
 T_{(\mathrm{id}_X,0)}(\Aut(X)\times U_0) \cong T_{\mathrm{id}_X}\Aut(X) \oplus T_0U_0
 \]
 of the tangent space.  Under a linearization of the $\C^*$-action, the inclusion of $T_0U_0$ in the direct sum corresponds to a germ of a $\C^*$-equivariant section, and then this germ extends uniquely to all of $\C$ by equivariance, giving the desired gauge transformation. 

For the second statement, note that by $\C^*$-equivariance and continuity, a morphism $X \rtimes_\rho \PP^1 \to X \rtimes_{\tilde \rho}\PP^1$ is determined by its value on the fibre over $\infty$, which is an automorphism $\phi \in \Aut(X)$.  The latter extends to a constant gauge transformation over the equivariant trivialization $X \times U_\infty$.  Switching to the trivialization $X \times U_0$ using the clutching functions, the gauge transformation becomes $\tilde \rho(t) \phi \rho(t)^{-1}$ for $t\neq0 \in U_0$.  Thus $\phi$ determines an isomorphism if and only if this gauge transformation extends over $0 \in U_0$, which is evidently equivalent to the existence of the limit in the statement of the lemma.  The limiting automorphism then conjugates $\rho$ to $\tilde \rho$, giving  the remaining statement.
\end{proof}

\end{lemma}

\subsubsection{K\"ahler classes and one-parameter subgroups}
To incorporate a K\"ahler structure into the pre-test configuration defined by a one-parameter subgroup, we must restrict our attention to \emph{reduced} one-parameter subgroups:

\begin{lemma}
    Let $\alpha$ be a K\"ahler class on $X$, and let $\rho : \C^* \to \Aut(X)$ be a one-parameter subgroup.  Then the pre-test configuration $X \rtimes_\rho \PP^1 \to \PP^1$ admits a K\"ahler class $\tstA$ making the pair $(X\rtimes_\rho \PP^1,\tstA)$ into a test configuration for $(X,\alpha)$ if and only if $\rho$ takes values in the subgroup $\Aut_{\rm red}(X) < \Aut(X)$ of reduced automorphisms.
\end{lemma}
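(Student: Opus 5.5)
The plan is to translate the existence of $\tstA$ into a condition on the $\C^*$-action on $X$, and then use the characterization of ${\rm Lie}(\Aut_{\rm red}(X))$ recalled above: the holomorphic vector fields with nonempty zero set, equivalently those whose real part is Hamiltonian for a K\"ahler form. Write $\tstX = X\rtimes_\rho\PP^1$, let $v\in H^0(X,T^{1,0}_X)$ be the generator of $\rho$, and note that $\rho$ lands in $\Aut_{\rm red}(X)$ exactly when $v\in {\rm Lie}(\Aut_{\rm red}(X))$.

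\emph{Sufficiency.} Suppose $\rho$ takes values in $\Aut_{\rm red}(X)$. Average a K\"ahler form $\omega\in\alpha$ over the compact circle $S^1\subset\C^*$, so that $\omega$ is $S^1$-invariant and still lies in $\alpha$. Since $v$ is reduced, the generator $\xi=\mathrm{Im}\, v$ of the circle action has a Hamiltonian $h$ with respect to $\omega$. The standard construction then gives a closed $(1,1)$-form on $X\times U_0$ that is positive on the fibres. Concretely, one twists $\mathrm{pr}_X^*\omega$ by $dd^c$ of the function $h\log|z|^2$ suitably cut off, or equivalently views $X\rtimes_\rho\PP^1$ as the quotient $(X\times(\C^2\setminus 0))/\C^*$ and performs symplectic reduction of $\omega+\omega_{\C^2}$ at a level where the shifted moment map $h+|z|^2$ has a free quotient. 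Either way one obtains a K\"ahler form on $\tstX$ whose restriction to every fibre is cohomologous to $\alpha$. Adding a large multiple of the pullback of the Fubini--Study form of $\PP^1$ fixes positivity in the horizontal directions. The resulting class $\tstA$ restricts to $\alpha$ at $\infty$, so $(\tstX,\tstA)$ is a test configuration.

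\emph{Necessity.} Suppose $\tstA$ exists. Restrict it to the central fibre $\tstX|_0\cong X$ to get a K\"ahler class, and average a representative over $S^1$ (the $S^1$-action on $\tstX$ preserves $\tstA$ since $\C^*$ is connected). The action on $\tstX|_0$ is $\rho$. Now use the topological fact that an $S^1$-action on a compact manifold preserving a K\"ahler form is Hamiltonian if and only if it has a fixed point, i.e. if and only if the flux class $[\iota_\xi\omega]\in H^1(X,\R)$ vanishes. To see that $\xi$ is Hamiltonian, use the global family. The total space $\tstX$ is compact K\"ahler, and the $S^1$-action on it has fixed points: the entire fibre over $\infty$ is fixed, since the action there is trivial. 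Hence the action on $\tstX$ is Hamiltonian for an invariant K\"ahler form $\Omega\in\tstA$, with moment map $H$. Restricting $H$ to the invariant submanifold $\tstX|_0$ gives a Hamiltonian for $\xi$ with respect to $\Omega|_{\tstX|_0}$. If $\tstX$ is singular, the same conclusion follows more robustly from Blanchard/Lieberman: the generator of the $\C^*$-action on $\tstX$ has zeros, so it acts trivially on $\Alb(\tstX)$. Restricting to a fibre, a vector field with a zero on a K\"ahler manifold is reduced, so $v\in{\rm Lie}(\Aut_{\rm red}(X))$.

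The main obstacle is the necessity direction when $\tstX$ is merely a normal analytic space and not a manifold. There the Hamiltonian argument needs K\"ahler forms and moment maps on singular spaces. Since $\tstX\cong X\rtimes_\rho\PP^1$ is in fact a smooth product-type family, smoothness should be automatic here, which reduces everything to the smooth equivariant argument above. In the sufficiency direction, the only subtle point is checking positivity of the glued form near $0$, and the symplectic-reduction description avoids that computation.
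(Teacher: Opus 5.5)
Your proof is correct. Sufficiency is the same as the paper's: average $\omega$ over $S^1$, use that a reduced $\rho$ is $\omega$-Hamiltonian, and build an $S^1$-invariant K\"ahler form on the associated bundle $\tstX$ by the momentum construction. Your symplectic reduction of $X\times(\C^2\setminus 0)$ at a level $c>\max h$ is a concrete instance of the construction the paper quotes.

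For necessity you take a genuinely different route.

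\emph{The paper's route} is cohomological. Because $\tstX$ is K\"ahler, the Leray spectral sequence of $\tstX\to\PP^1$ degenerates. This gives $\C^*$-equivariant isomorphisms $\Alb(X_0)\cong\Alb(\tstX)\cong\Alb(X_\infty)$. Triviality of the action at $\infty$ then forces $\rho$ to act trivially on $\Alb(X)$, which is the definition of $\Aut_{\rm red}(X)$.

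\emph{Your route} applies Frankel's theorem on $\tstX$: the pointwise-fixed fibre $X_\infty$ makes the circle action Hamiltonian for an averaged $\Omega\in\tstA$. You then restrict the moment map to the invariant submanifold $X_0$ and conclude via the zero-set characterization of ${\rm Lie}(\Aut_{\rm red}(X))$.

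\emph{Comparison.} The Hodge-theoretic input on $\tstX$ is essentially the same, since Frankel's theorem amounts to holomorphic $1$-forms on $\tstX$ annihilating the generator, i.e.\ triviality on $\Alb(\tstX)$. The difference is how this is transferred to $X_0$. Yours needs only that $X_0$ is an invariant complex submanifold. That is more elementary, and it mirrors how the paper itself obtains Hamiltonians in Proposition~\ref{p:smooth-ray}. The paper's route needs the isomorphism $H^1(\tstX)\cong H^1(X_0)$, but it lands directly on the Albanese definition of $\Aut_{\rm red}(X)$ without moment maps or the LeBrun--Simanca characterization.

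\emph{Two minor points.}
First, $\tstX=X\rtimes_\rho\PP^1$ is smooth by construction, being glued from $X\times U_0$ and $X\times U_\infty$. Your singular-case fallback can therefore be dropped. As written it would not close anyway: the zeros of the global generator lie on $X_\infty$, not on $X_0$, and passing from $\Alb(\tstX)$ to $\Alb(X_0)$ is exactly the paper's Leray step.
Second, ``Hamiltonian iff it has a fixed point'' should be stated for circle actions preserving the full K\"ahler structure (Frankel), not merely a symplectic form. That hypothesis holds here because the action is holomorphic.
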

\begin{proof}
To see the necessity, suppose that $\tstX=X\rtimes_\rho \PP^1$ is K\"ahler. Then from the degeneration of the Leray spectral sequence for the map $X\rtimes_\rho \PP^1 \to \PP^1$, we deduce that the restriction $H^1(\tstX;\Z) \to H^1(\tstX|_t;\Z)$ is an isomorphism of pure Hodge structures for all $t \in \PP^1$, and hence induces a $\C^*$-equivariant isomorphism of Albanese tori $\Alb(\tstX) \to \Alb(\tstX|_t) \cong \Alb(X)$ for $t = 0,\infty$.  Since $\C^*$ acts trivially on the fibre over infinity, while it acts on the fibre over zero through $\rho$, we deduce that the action on $\Alb(X)$ induced by $\rho$ is trivial, i.e.~$\rho$ takes values in $\Aut_{\rm red}(X)$.

For the converse, note that as a complex manifold,   $\tstX \cong  X\times_{\C^*} \cO_{\PP^1}(1)^{\times}$  is an $X$-fibre bundle associated to the principal $\C^*$-bundle $\cO_{\PP^1}(1)^{\times}\to \PP^1$,   with action induced by the natural $\C^*$-action on $\cO_{\PP^1}(1)\to\PP^1$.
In order to turn $\tstX$ into a K\"ahler test configuration, we need to construct  a compatible K\"ahler metric on $\tstX$. To this end, we use that $\rho$ is a $\C^*$ action in the reduced automorphism group of $X$: in this case, for any $\Sph^1$-invariant K\"ahler metric $\omega \in \alpha$  one can build (e.g.\ via the momentum construction in \cite{AJL}) an $\Sph^1$-invariant K\"ahler metric $\Omega$ on $\tstX$ satisfying $\Omega_{|_{X \times \{\tau\}}}= \omega$, as desired. 
\end{proof}

\subsubsection{Poisson structures and one-parameter subgroups} %
Now suppose that $\poiss$ is a Poisson structure on $X$.  We wish to endow a pre-test configuration of the form $X\rtimes_\rho \PP^1$ with a Poisson structure making it into a Poisson pre-test configuration.  To this end, note first that in the trivialization $X \times U_\infty$ over $U_\infty = \PP^1\setminus\{0\}$, the $\C^*$-invariance and continuity require that the Poisson structure on each fibre be identified with $\poiss$, i.e.~in this trivialization the family of Poisson structures is constant.  It then remains to extend the Poisson structure  over $0 \in \PP^1$.  To assess whether such an extension exists, we switch to the  trivialization $X \times U_0$ using the clutching function $\rho$.  Then the Poisson structure on the fibre $X \cong X \times \{t\} \subset X \times U_0$ for $t \neq 0$ is identified with $\rho(t) \cdot \poiss$. Hence the existence of the extension over $0 \in U_0$ is equivalent to the following condition on $\rho$:
\begin{defn}\label{def:poiss-compatible-1ps}
    A one-parameter subgroup $\rho : \C^* \to \Aut(X)$ is \emph{$\poiss$-compatible} if the following limit exists:
    \[
    \lim_{t \to 0} \rho(t)\cdot\poiss \in H^0(X,\wedge^2T^{1,0}X)
    \]
\end{defn}
Even if the limiting Poisson structure on the fibre over 0 exists, it need not be isomorphic to $\poiss$, let alone by an isomorphism respecting the K\"ahler class, i.e.~the resulting Poisson pre-test configuration need not be a Poisson product test configuration.   The existence of such a Poisson K\"ahler isomorphism is evidently equivalent to the following stronger condition on the limit:
\begin{defn}
    A one-parameter subgroup $\rho :\C^* \to \Aut(X)$ is \emph{$\poiss$-stabilizing relative to $\alpha$} if 
    \[
    \lim_{t \to 0} \rho(t)\cdot\poiss \in \Aut(X,\alpha)\cdot \poiss,
    \]
    or equivalently, $\rho$ is conjugate, by an element of $\Aut(X,\alpha)$, to a one-parameter subgroup of $\Aut(X,\poiss,\alpha)$.
\end{defn}
Hence, with these definitions in place, the discussion above gives the following.
\begin{prop}\label{prop:poisson-1ps}
Let $(\tstX,\Poiss,\tstA)$ be a Poisson test configuration such that $(\tstX,\tstA)$ is a product test configuration.  Then the following statements hold:
\begin{enumerate}
    \item $(\tstX,\Poiss,\tstA)$ is isomorphic to the Poisson K\"ahler test configuration defined by a $\sigma$-compatible one-parameter subgroup $\rho : \C^* \to \Aut_{\rm red}(X)$.
    \item $(\tstX,\Poiss,\tstA)$ is a Poisson product test configuration if and only if, in addition, the one-parameter subgroup $\rho$ stabilizes $\poiss$ relative to $\alpha$.  In this case, we have a (possibly non-equivariant) isomorphism of families
    \[
    (\tstX,\Poiss)|_{U_0} \cong (X,\poiss)\times U_0.
    \]
\end{enumerate}
\end{prop}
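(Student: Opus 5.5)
Part (1) follows by stacking the earlier lemmas. Since $(\tstX,\tstA)$ is a product test configuration, the underlying pre-test configuration $\tstX$ is a product pre-test configuration. Lemma~\ref{prop:subgroups}(1) then gives a one-parameter subgroup $\rho:\C^*\to\Aut(X)$ and an isomorphism $\tstX\cong X\rtimes_\rho\PP^1$ of pre-test configurations. Because $\tstX$ carries the K\"ahler class $\tstA$, the lemma on K\"ahler classes shows that $\rho$ takes values in $\Aut_{\rm red}(X)$; its necessity argument only used the existence of a K\"ahler class on $X\rtimes_\rho\PP^1$.

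Next, transport $\Poiss$ to $X\rtimes_\rho\PP^1$. Over $U_\infty$, the equivariant trivialization and $\C^*$-invariance force $\Poiss$ to be the constant family $\poiss$, after choosing the fibre isomorphism at $\infty$ to identify $\Poiss|_\infty$ with $\poiss$. This choice is harmless: if the identification differs by some $\phi\in\Aut(X)$ with $\phi\cdot\poiss'=\poiss$, we replace $\rho$ by $\phi\rho\phi^{-1}$, which by Lemma~\ref{prop:subgroups}(2) yields an isomorphic pre-test configuration. In the chart $X\times U_0$ the fibrewise bivector becomes $\rho(t)\cdot\poiss$ for $t\neq0$. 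Since $\Poiss$ is holomorphic on all of $\tstX$, this family extends holomorphically over $t=0$. The $\C^*$-orbit map into the finite-dimensional space $H^0(X,\wedge^2T^{1,0}_X)$ is algebraic by Lemma~\ref{lem:fujiki}, so the limit in Definition~\ref{def:poiss-compatible-1ps} exists and $\rho$ is $\poiss$-compatible. Conversely, the transported data, including the K\"ahler class, give an isomorphism of Poisson K\"ahler test configurations, which proves (1).

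For (2), the central fibre is $(X,\lim_{t\to0}\rho(t)\cdot\poiss)$ with class $\tstA|_0$. First we must check that $\tstA|_0$ corresponds to $\alpha$ under the trivialization. This holds because $\tstX\to\PP^1$ is a locally trivial family (Fischer--Grauert), so the restrictions of $\tstA$ are locally constant in $H^2(X,\R)$. Hence the configuration is a Poisson product exactly when there is $\psi\in\Aut(X)$ with $\psi^*\alpha=\alpha$ and $\psi\cdot\poiss=\lim_{t\to 0}\rho(t)\cdot\poiss$. That is precisely the condition that $\rho$ stabilizes $\poiss$ relative to $\alpha$.

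It remains to produce the (non-equivariant) isomorphism $(\tstX,\Poiss)|_{U_0}\cong(X,\poiss)\times U_0$. Set $\poiss_0:=\lim_{t\to0}\rho(t)\cdot\poiss=g\cdot\poiss$ with $g\in\Aut(X,\alpha)$, and consider the Poisson structures $\rho(t)\cdot\poiss$ on the chart $X\times U_0$. Using the equivalent form of the definition, write $\rho=h\rho' h^{-1}$ with $h\in\Aut(X,\alpha)$ and $\rho'$ a one-parameter subgroup of $\Aut(X,\poiss,\alpha)$. Then
\[
\rho(t)\cdot\poiss=h\rho'(t)h^{-1}\cdot\poiss .
\]
I expect the main obstacle to be that $h^{-1}\cdot\poiss$ need not be fixed by $\rho'$. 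Only its limit $h^{-1}\cdot\poiss_0$ is related to $\poiss$, so we cannot simply take the gauge transformation $\rho(t)$.

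My first attempt is an orbit-map argument. The map $t\mapsto\rho(t)\cdot\poiss$, together with its value $\poiss_0$ at $t=0$, is a holomorphic curve $\gamma:U_0\to H^0(X,\wedge^2T^{1,0}_X)$, and it lies in the single orbit $G\cdot\poiss$ with $G:=\Aut(X,\alpha)$. By Lemma~\ref{lem:fujiki} this orbit is a locally closed smooth subvariety, and the orbit map $G\to G\cdot\poiss$ is a holomorphic principal bundle for the stabilizer $\Aut(X,\poiss,\alpha)$. The curve is therefore a holomorphic map from the Stein, contractible space $U_0\cong\C$ into $G/\mathrm{Stab}$. Pulling back this principal bundle along $\gamma$ gives a holomorphic principal bundle over $\C$, which is holomorphically trivial by Grauert's Oka principle. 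This yields a holomorphic lift $\tilde\gamma:U_0\to G$ with $\tilde\gamma(t)\cdot\poiss=\gamma(t)$. Then the fibrewise map $(x,t)\mapsto(\tilde\gamma(t)^{-1}x,t)$ gives the desired Poisson isomorphism $(\tstX,\Poiss)|_{U_0}\cong(X,\poiss)\times U_0$, which is generally not $\C^*$-equivariant. I expect this lifting step to be the only delicate point of the proof.
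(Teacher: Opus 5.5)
Your argument is correct. For part (1) and for the equivalence in part (2) you follow the paper's route. The paper proves the proposition by assembling the preceding discussion: Lemma~\ref{prop:subgroups}, the K\"ahler-class lemma, the clutching computation giving $\rho(t)\cdot\poiss$ on the chart $X\times U_0$, and the observation that the central fibre is isomorphic to $(X,\poiss,\alpha)$ exactly when $\lim_{t\to0}\rho(t)\cdot\poiss\in\Aut(X,\alpha)\cdot\poiss$. Two small simplifications are available. The limit exists simply by continuity of the holomorphic family $t\mapsto\Poiss|_t$, so Lemma~\ref{lem:fujiki} is not needed there. Also, $\tstA|_0=\alpha$ holds because the two trivializations coincide on the fibre over $t=1$.

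Where you genuinely add something is the trivialization $(\tstX,\Poiss)|_{U_0}\cong(X,\poiss)\times U_0$. The paper states this as part of ``the discussion above'' with no separate argument; its only hint is the reformulation $\rho=h\rho'h^{-1}$ with $\rho'\subset\Aut(X,\poiss,\alpha)$. You rightly observe that this alone does not trivialize the family. Your lift of $t\mapsto\rho(t)\cdot\poiss$ through the principal bundle $\Aut(X,\alpha)\to\Aut(X,\alpha)\cdot\poiss$ by Grauert's Oka principle is valid, and it uses nothing about the $\C^*$-action.

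A lighter alternative within the paper's own toolkit is the fixed-point linearization from the proof of Lemma~\ref{prop:subgroups}(1). Choose $h\in\Aut(X,\alpha)$ with $h\cdot\poiss=\poiss_0$ and set $\rho':=h^{-1}\rho h$, which then fixes $\poiss$. On the pulled-back bundle $E=\{(g,t):g\cdot\poiss=\gamma(t)\}$, let $\C^*$ act by $\lambda\cdot(g,t)=(\rho(\lambda)g\rho'(\lambda)^{-1},\lambda t)$. This action preserves $E$ and fixes $(h,0)$. An equivariant section through $(h,0)$ then gives a holomorphic lift $\tilde\gamma$ with $\tilde\gamma(\lambda t)=\rho(\lambda)\tilde\gamma(t)\rho'(\lambda)^{-1}$.

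This avoids Oka theory and gives slightly more. The resulting trivialization intertwines the $\rho$-action on $\tstX|_{U_0}$ with the diagonal $\rho'$-action on $(X,\poiss)\times U_0$. Equivalently, $(\tstX,\Poiss)$ is isomorphic, via Lemma~\ref{prop:subgroups}(2) with $\tilde\gamma(1)^{-1}\in\Aut(X,\poiss,\alpha)$ at infinity, to $X\rtimes_{\rho'}\PP^1$ with the constant Poisson structure $\poiss$.
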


Using this result, we can easily construct examples of Poisson test configurations that are not Poisson product configurations, even though their underlying test configuration is a product:
\begin{ex}
    Let $Y$ be a compact K\"ahler manifold and let $X = \PP(T^*_{1,0}Y\oplus \cO_Y)$ be the compact K\"ahler manifold obtained by compactifying the holomorphic cotangent bundle of $Y$ by adding a projective  hyperplane at infinity.  Then the holomorphic symplectic structure on $T^*Y$ extends to a Poisson structure $\poiss$ on $X$.  The action of $\C^*$ on $T^*Y$ by dilation in the fibres extends to $X$, giving a one-parameter subgroup $\rho : \C^* \to \Aut_{\rm red}(X)$, and $\poiss$ is homogeneous of weight one with respect to this action.  Hence $\lim_{\lambda \to 0} \poiss = 0 \notin \Aut(X)\cdot \poiss$, so $\rho$ is $\poiss$-compatible but not $\poiss$-stabilizing.  
\end{ex}

\subsection{Poisson K-stability}
\label{sec:stability}
The notion of K-stability is phrased in terms of the following numerical invariant of test configurations.
\begin{defn}\label{d:DF} The \emph{Donaldson-Futaki invariant} of a K\"ahler test configuration $(\tstX, \tstA)$ of $(X, \alpha)$ is the intersection number
\begin{align}
 {\rm DF}(\tstX, \tstA) := - \Big(\frac{\left[c_1(\tstX)-\pi^*(c_1(\PP^1))\right] \cdot \tstA ^n}{n!}\Big)_{\tstX}
 + \Big(\frac{c_1(X) \cdot \alpha^{n-1}}{(n-1)!}\Big)_{X} \Big(\frac{{\rm Vol}(\tstX, \tstA)}{{\rm Vol}(X, \alpha)}\Big).   \label{eq:DF}
 \end{align}
 \end{defn}
In the above definition, the intersection of de Rham classes can be performed on any smooth Hironaka resolution $R: \widetilde{\tstX} \to \tstX$.  
It follows easily from the above that for a trivial test configuration $(\tstX_0 = X\times \PP^1, \tstA_{\varepsilon}= \alpha + \varepsilon c_1(\cO_{\PP^1}(1)))$ we have ${\rm DF}(\tstX_0, \tstA_{\varepsilon})=0$.

\begin{defn}
A compact Poisson K\"ahler manifold $(X,\poiss,\alpha)$ is called \emph{K-semistable} if
    \[
    \DF(\tstX,\tstA) \ge0
    \]    
    for  every Poisson K\"ahler test configuration $(\tstX,\Poiss,\tstA)$.  It is called \emph{K-polystable} if, in addition, equality holds if and only if $(\tstX,\Poiss,\tstA)$ is a Poisson product test configuration.
\end{defn}

Using Proposition \ref{prop:poisson-1ps}, we may relate the notion of Poisson K-polystability to the more classical notion of stability in geometric invariant theory, which we now recall.
\begin{defn}\label{d:GIT} Let $\G$ be a reductive complex Lie group acting linearly on a complex vector space $W$. A non-zero point $w\in W$ is called 
\begin{enumerate}
\item  \emph{polystable} if the $\G$-orbit  $\G \cdot w$ of $w$ is closed in $W$; 
\item  \emph{stable} if it is polystable and the stabilizer $\G_w$ is  finite.  
\item  \emph{semistable} if the closure $\overline{\G\cdot w}$ does not contain $0$;
\item \emph{unstable} if $0 \in \overline{\G\cdot w}$.
\end{enumerate}
\end{defn}
We further recall that, according to the Hilbert--Mumford criterion, $w$ is polystable if and only if for every one-parameter subgroup $\rho : \C^* \to \G$ such that the limit $\lim_{\lambda \to 0} \rho(\lambda)\cdot w = w_0$ exists, we have $w_0 \in \G\cdot w$.  We then have the following result:

\begin{prop}\label{p:Kahler/Poisson-K-polystable} 
Let $(X,\alpha)$ be a K-polystable K\"ahler manifold such that $\Aut_{\rm red}(X)$ is reductive, and let $\poiss$ be a holomorphic Poisson structure on $X$.  Then the triple $(X,\poiss,\alpha)$ is Poisson K-polystable if and only if $\poiss$ is a polystable point for the action of $\Aut_{\rm red}(X)$ on $H^0(X, \Wedge^2 T^{1,0}_X)$.
\end{prop}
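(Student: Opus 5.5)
The plan is to reduce everything to the classical K-polystability of $(X,\alpha)$ together with Proposition~\ref{prop:poisson-1ps} and the Hilbert--Mumford criterion. The first observation is that forgetting the Poisson structure turns every Poisson K\"ahler test configuration $(\tstX,\Poiss,\tstA)$ into a K\"ahler test configuration $(\tstX,\tstA)$ of $(X,\alpha)$ with the same Donaldson--Futaki invariant, since \eqref{eq:DF} does not involve $\Poiss$. Hence K-semistability of $(X,\alpha)$ gives $\DF(\tstX,\tstA)\geq 0$ for all Poisson test configurations, so $(X,\poiss,\alpha)$ is automatically Poisson K-semistable. Moreover, by K-polystability of $(X,\alpha)$, $\DF(\tstX,\tstA)=0$ forces $(\tstX,\tstA)$ to be a product test configuration. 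Therefore Poisson K-polystability is equivalent to the statement: \emph{every Poisson test configuration whose underlying K\"ahler test configuration is a product is a Poisson product test configuration.} Conversely, every product test configuration has $\DF=0$; this is the classical fact that $\DF$ of $X\rtimes_\rho\PP^1$ equals the Futaki invariant of the generator of $\rho$, and that invariant vanishes because $(X,\alpha)$ is K-polystable. So the equality case exactly picks out the product test configurations.

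Next I translate this into one-parameter subgroups using Proposition~\ref{prop:poisson-1ps}. Poisson test configurations with product underlying test configuration correspond to $\poiss$-compatible one-parameter subgroups $\rho:\C^*\to\Aut_{\rm red}(X)$. Conversely, given any such $\rho$, the construction $X\rtimes_\rho\PP^1$ with the K\"ahler class from the lemma on reduced subgroups and the extended Poisson structure gives one. Such a test configuration is Poisson product exactly when $\lim_{t\to0}\rho(t)\cdot\poiss\in\Aut(X,\alpha)\cdot\poiss$. So Poisson K-polystability is equivalent to the following condition: for every one-parameter subgroup $\rho$ of $\Aut_{\rm red}(X)$ for which $\sigma_0=\lim\rho(t)\cdot\poiss$ exists, $\sigma_0$ lies in $\Aut(X,\alpha)\cdot\poiss$.

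Finally I compare this with the Hilbert--Mumford criterion for the reductive group $G=\Aut_{\rm red}(X)$ acting linearly on $H^0(X,\wedge^2T^{1,0}_X)$; by Lemma~\ref{lem:fujiki} the action is algebraic. If $\poiss$ is polystable, the limit lies in $G\cdot\poiss\subset\Aut(X,\alpha)\cdot\poiss$, which gives the forward direction of the equivalence. The main obstacle is the converse: from $\sigma_0\in\Aut(X,\alpha)\cdot\poiss$ we need $\sigma_0\in G\cdot\poiss$. Here I would use Lemma~\ref{lem:fujiki}, which says that the $\Aut(X,\alpha)$-orbit of $\poiss$ is a finite union of $G$-orbits, all of the same dimension, forming its irreducible components. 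Since $\sigma_0\in\overline{G\cdot\poiss}$, the orbit $G\cdot\sigma_0$ is either $G\cdot\poiss$ or lies in the boundary $\overline{G\cdot\poiss}\setminus G\cdot\poiss$, where the dimension is strictly smaller. The second case contradicts the equidimensionality of the components of $\Aut(X,\alpha)\cdot\poiss$, because $\sigma_0=\phi\cdot\poiss$ for some $\phi\in\Aut(X,\alpha)$, and $\phi$ normalizes $G$, so $G\cdot\sigma_0=\phi G\cdot\poiss$ has the same dimension as $G\cdot\poiss$. Hence $\sigma_0\in G\cdot\poiss$, and Hilbert--Mumford gives that $G\cdot\poiss$ is closed, i.e.\ $\poiss$ is polystable. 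One minor point to handle is the case $\poiss=0$, where both sides hold trivially.
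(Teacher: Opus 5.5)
Your proposal is correct and follows essentially the same route as the paper. K-polystability of $(X,\alpha)$ reduces the equality case to product test configurations, and Proposition~\ref{prop:poisson-1ps} turns these into $\poiss$-compatible one-parameter subgroups of $\Aut_{\rm red}(X)$. Lemma~\ref{lem:fujiki} together with the Hilbert--Mumford criterion then finishes the argument. Your orbit-dimension argument makes explicit the step the paper leaves to Lemma~\ref{lem:fujiki}; it uses that $\Aut_{\rm red}(X)$ is normal in $\Aut(X,\alpha)$, which holds because its Lie algebra of holomorphic vector fields with zeros is preserved by any biholomorphism.
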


\begin{proof}
Let $(\tstX,\Poiss,\tstA)$ be a Poisson test configuration.  Since $(X,\alpha)$ is K-polystable, we have $\DF(\tstX,\tstA)=0$ if and only if $(\tstX,\tstA)$ is a product test configuration. Moreover, by Lemma \ref{lem:fujiki}, the $\Aut(X,\alpha)$-orbit of $\poiss$ is a finite union of $\Aut_{\rm red}(X)$-orbits. 
 The result therefore follows by combining Proposition \ref{prop:poisson-1ps} with the  Hilbert--Mumford criterion for the action of the reductive group $\Aut_{\rm red}(X)$ on $H^0(X, \wedge^2 TX)$.
\end{proof}

The proposition allows us to easily check that various well known Poisson varieties are K-stable:

\begin{ex} Since $\wedge^2 T^{1,0} \PP^2 \cong \cO_{\PP^2}(3)$, a nonzero Poisson structure on $\PP^2$ vanishes on a cubic curve, which determines it up to rescaling by a constant.  It follows from  Proposition~\ref{p:Kahler/Poisson-K-polystable}  and the analysis of polystable bivectors $0\neq \poiss \in H^0(\PP^2, \cO_{\PP^2}(3))$ under the action of ${\rm PGL}(3, \C)$ (see e.g. \cite{nakamura}, Table 1) 
that $(\PP^2, \cO_{\PP^2}(1), \poiss)$ is Poisson K-polystable if and only if the zero locus $\poiss=0$ corresponds to one of the following two cases:
\begin{enumerate}
\item a triangle of lines: these are polystable toric Poisson structures, i.e. $\Aut_{\circ}(\PP^2, \sigma)\cong\C^* \times \C^*$.
\item smooth elliptic curve:  these are stable Poisson structures, i.e.  $\Aut_{\circ}(\PP^2, \sigma)=\{1\}$.
\end{enumerate}
\end{ex}

\begin{ex}\label{ex:P1xP1}
If $X=\PP^1\times \PP^1$, a Poisson structure is a section of $K^{-1}_{\PP^1\times \PP^1}\cong \cO_{\PP^1\times\PP^1}(2,2)$.  Using the polarization defined by $\cO_{\PP^1\times \PP^1}(p,q), \, p,q>0$, which is K-polystable by \cite{BDL}, the Poisson K-polystability then corresponds to the polystability of $(2,2)$-divisors under the action of the group $G = \mathrm{SL}(2,\mathbb{C})\times \mathrm{SL}(2,\mathbb{C})$, which can be determined using the methods of \cite{KOP-GIT}.  We find that a Poisson structure on $\PP^1\times \PP^1$ is K-polystable if and only if its vanishing locus lies in the $G$-orbit of one of the following:
\begin{enumerate}
    \item a smooth curve of bidegree $(2,2)$, i.e.~an elliptic curve; these are stable Poisson structures, with $\Aut_\circ(\PP^1\times\PP^1,\sigma) = \{1\}$.
    \item the toric boundary divisor $(\{0,\infty\}\times \PP^1) \cup (\PP^1\times\{0,\infty\})$; in this case, the Poisson structure is polystable with automorphism group $\Aut_\circ(\PP^1\times\PP^1,\sigma) \cong \mathbb{C}^*\times\mathbb{C}^*$.
    \item the union of the diagonal and the graph of an automorphism $1\neq\phi \in \mathbb{C}^*<\Aut(\PP^1)$ that fixes $\{0,\infty\}$ pointwise; these Poisson structures are polystable with automorphism group $\Aut_\circ(\PP^1\times\PP^1,\sigma) \cong \mathbb{C}^*$
    \item the double diagonal; this case is polystable with automorphism group $\Aut_\circ(\PP^1\times\PP^1,\sigma)\cong\mathrm{PGL}(2,\mathbb{C})$.
\end{enumerate}
\end{ex}

\begin{ex}\label{ex:sklyanin} %
    Given a  pencil of quadrics in $\PP^3$, there is unique (up to scale) nonzero Poisson structure  on $\PP^3$ for which every quadric in the pencil is a Poisson subvariety.  According to~\cite{Avritzer1999,Wall1983}, a pencil of quadricks on projective space is GIT-stable if and only if its discriminant is reduced, which for $\PP^3$ is equivalent to the statement that exactly four members of the pencil have an isolated singularity, and the rest are smooth.  The corresponding Poisson structures are then also K-polystable by Proposition~\ref{p:Kahler/Poisson-K-polystable}.  The vanishing locus of such a Poisson structure is the disjoint union of an elliptic normal curve (the base locus of the pencil), and the four isolated singularities of the quadrics; these are the celebrated elliptic Poisson structures introduced by Sklyanin~\cite{Sklyanin1982}.  Higher-dimensional  Poisson structures associated with elliptic curves were introduced by Feigin--Odesskii~\cite{Feigin1989,Feigin1998} and we expect that they are all stable.              \end{ex}

\subsection{Poisson K-stability and GK geometry}
Suppose that $(\tstX,\Poiss,\tstA)$ is a  Poisson K\"ahler test configuration of $(X,\poiss,\alpha)$ for which the total space $\tstX$ is smooth, and suppose that $\tstF \in \GK_{\Poiss,\tstA}$ is a symplectic GK structure that is invariant under the action of $S^1$ on $\tstX$.  Then the $\tstF$-conjugate complex structure gives a second complex manifold $\tstX_-$, and the $S^1$-action induces a holomorphic $\C^*$ action on $\tstX_-$. Since the fibres of $\pi$ are holomorphic Poisson subvarieties of $(\tstX,\Poiss)$, Lemma \ref{lem:GK-restrict} implies that they are also holomorphic in $\tstX_-$, and hence the induced map $\tstX_- \to \PP^1$ is also holomorphic.  Moreover, by construction, the action of $S^1$ (and hence of $\C^*$) on the fibre over $\infty$ is trivial.

As a result, $(\tstX_-,\Poiss_-,\tstA)$ is a candidate for a smooth test configuration for the Poisson K\"ahler manifold $(X_-,\poiss_-,\alpha) := (\tstX_-,\Poiss_-,\tstA)|_\infty$; note that this is the holomorphic Poisson manifold conjugate to $(X,\poiss,\alpha)$ by the symplectic GK structure $F:=\tstF|_\infty$.  However, there is no reason \emph{a priori} why $\tstX_-$ should be K\"ahler, and even if so, there is no guarantee that $\alpha$ will be a K\"ahler class.  Fortunately, the following gives a simple condition to guarantee that $(\tstX_-,\Poiss_-,\tstA)$ is indeed a Poisson K\"ahler test configuration.

\begin{lemma}\label{l:GK-test}
    Suppose that $\tstX_-$ admits a K\"ahler metric, and that $\alpha \in H^{1,1}(X_-, \C)$.  Then the following statements hold:
    \begin{enumerate}
        \item The triple $(\tstX_-,\Poiss_-,\tstA)$ is a Poisson K\"ahler test configuration for $(X_-,\poiss_-,\alpha)$.
        \item We have the equality 
    \[
        \DF(\tstX_-,\tstA) = \DF(\tstX,\tstA)
    \]
    of the Donaldson--Futaki invariants.
    \item $(\tstX_-,\Poiss_-,\tstA)$ is a product (respectively, trivial) Poisson test configuration if and only if $(\tstX,\Poiss,\tstA)$ is a product (resp.~trivial) Poisson test configuration.
    \end{enumerate}
\end{lemma}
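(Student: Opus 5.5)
The plan is to verify the three items in order, working entirely on the smooth real manifold $\mathcal M$ underlying $\tstX$ and $\tstX_-$, which carries the $S^1$-invariant symplectic form $\tstF$ and the two integrable complex structures $\mathcal J$, $\mathcal I=-\tstF^{-1}\mathcal J^*\tstF$.

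For (1), we already know from the preceding discussion, via Lemma \ref{lem:GK-restrict}, that $\pi:\tstX_-\to\PP^1$ is holomorphic. The $S^1$-action complexifies to a holomorphic $\C^*$-action on $\tstX_-$ covering the standard action. The bivector $\Poiss_-$ is tangent to the fibres, since $\mathrm{Re}\,\Poiss_-=\mathrm{Re}\,\Poiss$ is, and it is $\C^*$-invariant because $\tstF$ and $\mathcal J$ are $S^1$-invariant. Flatness of $\pi$ is automatic: $\pi$ is a smooth proper submersion between manifolds, hence flat. Restricting the GK structure to each fibre with Lemma \ref{lem:GK-restrict}, the fibre $(\tstX_-,\Poiss_-)|_t$ for $t\neq 0$ is identified, through the $\C^*$-action, with the fibre at $\infty$, which by definition is $(X_-,\poiss_-)$. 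It remains to show that $\tstA$ is a K\"ahler class on $\tstX_-$. The class $\tstA=[\tstF]$ has type $(1,1)$ on $\tstX_-$: the hypothesis $\alpha\in H^{1,1}(X_-)$ together with the Leray/Deligne decomposition $H^2(\tstX)=H^2(\PP^1)\oplus H^2(X)$ (valid since $\tstX_-$ is K\"ahler) reduces this to the fibre class, the base class being automatically $(1,1)$. Positivity is the step I expect to be the main obstacle. The natural tool is the Demailly--P\u{a}un characterisation of the K\"ahler cone: $\tstA$ should be K\"ahler on $\tstX_-$ because it is represented by a symplectic form taming $\mathcal I$, so it integrates positively over every irreducible analytic subvariety of $\tstX_-$ (a taming form restricts to a volume form on $\mathcal I$-complex subspaces). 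One should double-check that such positivity on all subvarieties, plus membership in $H^{1,1}$, puts the class in the K\"ahler cone rather than merely in its closure. The first argument I would try is to apply Demailly--P\u{a}un to the segment $\tstA+s\omega_-$ for a K\"ahler class $\omega_-$; this stays in the positive cone for all $s\geq 0$, giving openness and closedness of the set of K\"ahler parameters along the segment.

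For (2), the key point is $c_1(\tstX_-)=c_1(\tstX)$ in $H^2(\mathcal M,\R)$. Both $\mathcal I$ and $\mathcal J$ are tamed by $\tstF$, so they lie in the contractible space of $\tstF$-tame almost complex structures. Hence they are homotopic, and the Chern classes of $(T\mathcal M,\mathcal I)$ and $(T\mathcal M,\mathcal J)$ agree. The same homotopy argument on the fibre at $\infty$ gives $c_1(X_-)=c_1(X)$. The remaining ingredients coincide on both sides: $\pi^*c_1(\PP^1)$ is topological, and the volumes and the class $\tstA$ are the same. Since $\tstX$ is smooth, no resolution is needed. Substituting into \eqref{eq:DF} term by term gives equality of the Donaldson--Futaki invariants.

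For (3), the GK correspondence is symmetric: $\tstF$ also defines a symplectic GK structure on $\tstX_-$ whose conjugate is $\tstX$, so it suffices to prove one direction. Suppose $(\tstX,\Poiss,\tstA)$ is a product configuration. Then the central fibre $X_0$ is smooth and Poisson-isomorphic to $(X,\poiss)$, and Lemma \ref{lem:GK-restrict} restricts $\tstF$ to a $\poiss$-compatible GK structure $F_0$ on $X_0$ with class $\alpha$. I would compare $F_0$ and $F_\infty$ through the family $\tstF|_t$. The fibres vary smoothly, and $\C^*$-equivariance identifies all fibres with $t\neq 0$, so the structures $\tstF|_t$ lie in a single connected family inside $\GK_{\poiss,\alpha}$. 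Remark \ref{rem:Poisspreserved} then shows that the conjugate Poisson structures $(X_-)_0$ and $X_-$ are isomorphic; this uses that along a connected component of $\GK_{\poiss,\alpha}$ the isomorphism class of $(I,\poiss_-)$ is constant. For the trivial case one additionally tracks the $\C^*$-action on the central fibre: it is trivial on $\tstX|_0$ if and only if the $S^1$-action is trivial there, which is a statement about real diffeomorphisms and so holds for $\tstX_-|_0$ as well. Checking that the family really lies in one component of $\GK_{\poiss,\alpha}$, transported via Fischer--Grauert trivialisations, is a secondary technical point.
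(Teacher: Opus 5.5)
Your route for all three items is the paper's: you reduce the $(1,1)$-type of $\tstA$ to that of $\alpha$ through the kernel of restriction to a fibre. You get positivity from Demailly--P\u{a}un along the segment $\tstA+s[\omega]$, using that $\tstF+s\omega$ tames $\tstX_-$ for all $s\ge 0$. You get equality of the Donaldson--Futaki invariants from $c_1(\tstX)=c_1(\tstX_-)$, since $\tstF$ tames both complex structures. And you get (3) by symmetry from Remark~\ref{rem:Poisspreserved} applied to a local trivialization.

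There is one false step in (1). You assert that $\pi:\tstX_-\to\PP^1$ is a proper submersion. You then use this both for flatness and to invoke the Deligne decomposition $H^2(\tstX_-)\cong H^2(\PP^1)\oplus H^2(X_-)$. Only the total space is assumed smooth. The central fibre $D=\pi^{-1}(0)$ of a smooth test configuration may be singular, reducible or non-reduced; a deformation to the normal cone is an example. Lemma~\ref{lem:GK-restrict} only makes $\pi$ a submersion away from $0$. When $D$ has several components, $H^2(\tstX_-)$ contains extra divisor classes, so your decomposition is false in general.

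The repair is the paper's argument. Since $\pi$ is a proper submersion over $\PP^1\setminus\{0\}\cong\C$, the complement $\tstX_-\setminus D$ is homotopy equivalent to $X_-$. The exact sequence of the pair $(\tstX_-,\tstX_-\setminus D)$ (the Gysin/residue sequence) then shows that the kernel of $H^2(\tstX_-,\C)\to H^2(X_-,\C)$ is spanned by the fundamental classes of the irreducible components of $D$. These classes are of type $(1,1)$, and restriction is a morphism of Hodge structures. Hence $\tstA$ is of type $(1,1)$ if and only if $\alpha$ is, and Demailly--P\u{a}un applies as you planned.

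Flatness should likewise be justified not by submersivity but by the fact that a non-constant holomorphic map from a connected complex manifold to a smooth curve is flat. Your separate treatment of the trivial case in (3), via vanishing of the $S^1$-generator on the central fibre, is a harmless supplement to the paper's argument.
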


\begin{proof}
As explained above,  Lemma~\ref{lem:GK-restrict} implies that the morphism $\pi : \tstX_- \to \PP^1$ is a holomorphic submersion away from $0$. Hence it is topologically locally trivial.  In particular, if $D := \pi^{-1}(0) \subset \tstX_-$ denotes the central fibre, then the restriction $H^2(\tstX_-\setminus D, \C) \to H^2(X_-, \C)$ is an isomorphism, and from the Gysin/residue spectral sequence we deduce that the kernel of the map $H^2(\tstX_-, \C) \to H^2(X_-, \C)$ is generated by fundamental classes of irreducible components of the central fibre $\tstX_-|_0$.  It follows that $\tstA \in H^{1,1}(\tstX_-, \R)$ if and only if $\alpha \in H^{1,1}(X_-, \R)$.  

To see that $\tstA$ is a K\"ahler class, note that $\tstA$ is represented by the GK symplectic form $\tstF$, which in particular tames $\tstX_-$. Hence if $\omega$ is any K\"ahler form on $\tstX_-$, the form $\tstF+t\omega$ tames $\tstX_-$ for all $t \ge 0$.  It follows that $\int_{Y}(\tstF+t\omega)^{\dim Y} > 0$ for all complex analytic subvarieties $Y\subset \tstX$, and hence $\tstA$ is a K\"ahler class by the  Demailly--Paun criterion \cite[Theorem 4.2(ii)]{DP}.

For the statement about products/triviality (and the triviality of $\tstX_- \to \PP^1$ at infinity),  it suffices to argue that $\tstX\to \PP^1$ is locally trivial over an open subset $U \subset \PP^1$ if and only if $\tstX_- \to \PP^1$ is so.  The statement is symmetric under interchanging the roles of $X$ and $X_-$, so it suffices to prove the ``only if'' part.  Thus, assume given a trivialization $(\tstX,\Poiss)|_U \cong (X,\poiss) \times U$.  We may then view the GK structure on the fibres as a family of two-forms $F_z := \tstF|_z$ on $X$, all lying in the same generalized K\"ahler class.  Since in a symplectic GK structure, the isomorphism class of both holomorphic Poisson manifolds depends only on the generalized K\"ahler class (cf. Remark \ref{rem:Poisspreserved}) we deduce that the fibres of $(\tstX_-,\Poiss_-)|_U$ are also pairwise isomorphic, and the result follows.

Since the complex manifolds $\tstX$ and $\tstX_-$ are tamed by the same symplectic form $\tstF$,  we have $c_1(\tstX)=c_1(\tstX_-)$.  Hence the equality of the Donaldson--Futaki invariants is immediate from the definition \eqref{eq:DF}.
\end{proof}

Using this result, we obtain a sort of invariance of K-polystability under Hamiltonian deformation, at least when we probe with smooth test configurations.
\begin{lemma}\label{lem:gualtieri-def-stable}
    Let $(X,\poiss,\alpha)$ be a holomorphic Poisson K\"ahler manifold with $H^{2,0}(X, \C)=0$, and let $(X_-(t),\poiss_-(t),\alpha)$ for $|t|<\epsilon$ be its Hamiltonian deformation in the sense of Definition \ref{def:ham-def}.  If the triple $(X_-(t),\poiss_-(t),\alpha)$ is K-polystable on smooth test configurations for all $0 < |t| \ll \epsilon$, then so is $(X,\poiss,\alpha)$.
\end{lemma}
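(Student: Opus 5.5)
The plan is to transfer any smooth Poisson test configuration of $(X,\poiss,\alpha)$ to a smooth Poisson test configuration of $(X_-(t),\poiss_-(t),\alpha)$ by way of Lemma~\ref{l:GK-test}, and then pull the inequalities back. Fix a smooth Poisson K\"ahler test configuration $(\tstX,\Poiss,\tstA)$ of $(X,\poiss,\alpha)$. The goal is $\DF(\tstX,\tstA)\ge 0$, with equality only when it is a Poisson product.

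\textbf{Step 1: a GK structure on the test configuration.} Average a K\"ahler form $\Omega\in\tstA$ over the $S^1$-action. Then apply the Hamiltonian deformation of \autoref{thm:gg-def} to $(\tstX,\Poiss,\Omega)$ with parameter $t$. This gives an analytic family $\tstF(t)\in\GK_{t\Poiss,\tstA}$, and it is $S^1$-invariant because the construction respects every symmetry of $(\tstX,\Poiss,\Omega)$.

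Two points need care here.
\begin{enumerate}
\item \autoref{thm:gg-def} assumes $H^{0,2}(\tstX)=0$. I would derive this from $H^{2,0}(X)=0$ and the fibration over $\PP^1$: $H^{0,2}$ of a fibration over $\PP^1$ with fibres carrying no $(2,0)$-forms should vanish via the Leray spectral sequence and Hodge symmetry. If this turns out to be delicate, I would instead use the class-level version of the construction, which only needs the obstruction to vanish modulo $[\tstA]$.
\item Only $\tstF(t)$ with $t\neq0$ small is needed. Since $\Poiss$ is tangent to the fibres and the construction is fibrewise functorial, $\tstF(t)|_\infty$ is a GK structure for $(X,t\poiss,\alpha)$.
\end{enumerate}

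\textbf{Step 2: identify the conjugate fibre and check the hypotheses of Lemma~\ref{l:GK-test}.} I need the conjugate at infinity, $(\tstX_-(t),\Poiss_-(t))|_\infty$, to be the Hamiltonian deformation $(X_-(t),\poiss_-(t))$. This should follow from the uniqueness and isomorphism-class invariance in Remark~\ref{rem:Poisspreserved}, since both are obtained from Gualtieri's construction in the same generalized K\"ahler class, up to the choice of initial K\"ahler form. For the hypotheses:
\begin{itemize}
\item $\tstX_-(t)$ is a small deformation of the K\"ahler manifold $\tstX$, because $\tstX_-(0)=\tstX$. It is therefore K\"ahler for $|t|$ small by Kodaira--Spencer stability.
\item $\alpha\in H^{1,1}(X_-(t))$ holds automatically, since $H^{2,0}=0$ persists under small deformation.
\end{itemize}
Lemma~\ref{l:GK-test} then shows that $(\tstX_-(t),\Poiss_-(t),\tstA)$ is a smooth Poisson K\"ahler test configuration for $(X_-(t),\poiss_-(t),\alpha)$ with the same DF invariant.

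\textbf{Step 3: conclude.} Polystability of the deformed triple on smooth test configurations gives $\DF(\tstX,\tstA)=\DF(\tstX_-(t),\tstA)\ge0$. If equality holds, $(\tstX_-(t),\Poiss_-(t),\tstA)$ is a Poisson product. By part~(3) of Lemma~\ref{l:GK-test}, $(\tstX,t\Poiss,\tstA)$ is then a Poisson product for $(X,t\poiss,\alpha)$. Rescaling the Poisson tensor by the constant $t^{-1}$ preserves isomorphisms, so $(\tstX,\Poiss,\tstA)$ is a Poisson product.

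\textbf{Main obstacle.} I expect Steps~1--2 to be the hardest part. One must make \autoref{thm:gg-def} apply on the total space $\tstX$ uniformly in $t$, and one must match its fibre at infinity with the given Hamiltonian deformation of $X$. I would handle both by working in a fixed generalized K\"ahler class and using the functoriality of Gualtieri's inductive construction under restriction to Poisson subvarieties, via Lemma~\ref{lem:GK-restrict}.
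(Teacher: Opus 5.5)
Your proposal follows the paper's proof essentially step for step: $S^1$-averaging, vanishing of $H^{0,2}(\tstX,\C)$, applying \autoref{thm:gg-def} on the total space, then Lemma~\ref{l:GK-test} (using that K\"ahlerness and Hodge numbers survive small deformations) to get a test configuration for the deformed triple with the same Donaldson--Futaki invariant, which is a Poisson product exactly when $(\tstX,t\Poiss,\tstA)$ is, and finally rescaling the Poisson tensor. One correction: Gualtieri's construction is not functorial under restriction to fibres, because its Green operator lives on $\tstX$, so $\tstF(t)|_\infty\in\GK_{t\poiss,\alpha}$ should be justified by Lemma~\ref{lem:GK-restrict}; likewise, identifying its conjugate with the given Hamiltonian deformation (a step the paper leaves implicit) should go through Remark~\ref{rem:Poisspreserved}, by showing that both GK structures lie in one connected component of $\GK_{t\poiss,\alpha}$, not by claiming both come from the same construction on $X$.
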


\begin{proof}
    By averaging, any smooth test configuration $(\tstX,\Poiss,\tstA)$ admits an $S^1$-invariant K\"ahler metric.  Moreover, restriction gives an isomorphism $H^{0,2}(\tstX, \C)\to H^{0,2}(X_-, \C)=0$. Hence we may apply the Hamiltonian deformation construction of   \autoref{thm:gg-def} to obtain a family of $S^1$-invariant $GK$ structures $\tstF(t)$ which conjugate $(\tstX,\Poiss,\tstA)$ to a family of triples $(\tstX(t),\Poiss(t),\tstA)$.  Since the class of K\"ahler manifolds (and their Hodge numbers) are invariant under small deformation, Lemma \ref{l:GK-test} applies and we deduce that $(\tstX(t),\Poiss(t),\tstA)$ is a Poisson K\"ahler test configuration for the family $(X(t),\poiss(t),\alpha)$ obtained from the induced GK structures $F(t)=\tstF(t)|_\infty \in \GK_{t\poiss,\alpha}$, with the same Donaldson--Futaki invariant, which is a product if and only if $(\tstX,t\Poiss,\tstA)$ is.  The result follows since K-polystability is invariant under rescaling the Poisson structure.
\end{proof}

With this result in hand, we can produce an example of a Poisson variety $(X,\poiss)$ that satisfies the Poisson K-polystablility condition for smooth test configurations, even though the underlying variety $X$ is not K-polystable, and the Poisson structure $\poiss$ is unstable for the action of $\Aut(X)$.  Note that this does not contradict Proposition \ref{p:Kahler/Poisson-K-polystable}. 
\begin{ex}\label{p:F_2-poisson-stability}
    Let $X = \PP(T^*\PP^1\oplus \cO_{\PP^1})$ be the second Hirzebruch surface, and let $\poiss$ be the Poisson structure on $X$ extending the holomorphic symplectic structure on $T^*\PP^1$.   As explained by Hitchin~\cite[p. 7]{Hitchin}, the Hamiltonian deformation has the form $X(t)\cong \PP^1 \times \PP^1$ for $t \neq 0$.  Since $\PP^1\times \PP^1$ is cscK for any polarization, we deduce $(X(t),\alpha)$ is K-polystable for $t \neq 0$; see \cite{BDL, stoppa}.  Moreover, since the Poisson structures on $X(t)$ have the same vanishing set for all $t$, we deduce that for $t \neq 0$, the vanishing locus of $\poiss(t)$ is identified with a non-reduced curve of bidegree $(2,2)$ on $\PP^1\times \PP^1$.  Hence $(X(t),\poiss(t),\alpha)$  is K-polystable for $t \neq 0$ by Example \ref{ex:P1xP1}.  It therefore follows from Corollary 
    \ref{lem:gualtieri-def-stable} that $(X,\poiss,\alpha)$ is K-polystable on smooth test configurations.

The underlying surface $X$ itself is K-unstable with respect to any polarization: indeed, the one-parameter subgroup $\rho \to \Aut(X)$ acting fiberwisely  gives rise (when suitably oriented) to a product test configuration $X \rtimes_\rho \PP^1$ with negative Donaldson--Futaki invariant; this also follows for instance from Calabi's construction~\cite{calabi} of extremal K\"ahler metrics with non-constant scalar curvature. However,  this test configuration is  not Poisson  since $\lim_{t\to 0}\rho(t)\cdot \poiss = \infty$. Meanwhile, for the inverse one-parameter subgroup $\rho^{-1}$, we have $\lim_{t \to 0}\rho^{-1}(t)\cdot \poiss = 0$, so that $X\rtimes_{\rho^{-1}}\PP^1$ is a non-product Poisson test configuration whose underlying test configuration is a product, but with strictly positive Donaldson--Futaki invariant.  Hence, while $\rho$ and $\rho^{-1}$ destabilize $X$ and $\poiss$ separately, neither of them destabilizes the pair $(X,\poiss)$.
\end{ex}

\begin{rem} \label{rem:singular-GK}
The results in this subsection apply to smooth test configurations, but can also be used to obtain some information for singular test configurations.  Namely, suppose $(\tstX,\Poiss,\tstA)$ is a singular Poisson test configuration that admits a Poisson resolution of singularities $\phi : (\widetilde \tstX,\widetilde \Poiss) \to (\tstX,\Poiss)$ by a sequence of blowups, and a family of K\"ahler classes $\widetilde\tstA_\epsilon := \phi^*\tstA - \epsilon E$ for $\epsilon > 0$, where $E\subset \widetilde \tstX$ is the exceptional divisor.  This gives a family of test configurations $(\widetilde \tstX,\widetilde\tstA_\epsilon)$ such that $\lim_{\epsilon\to 0}\DF(\widetilde \tstX,\widetilde\tstA_\epsilon) = \DF(\tstX,\tstA)$. 

However, there are singular Poisson varieties that do not admit Poisson resolutions, i.e.~the Poisson analogue of Hironaka's theorem fails.  Hence, to obtain a full correspondence between the stability properties of the two holomorphic Poisson structures underlying a GK structure, it seems necessary to extend the constructions in this subsection to singular (but normal) test configurations.  We believe that such an extension is possible,  but since a theory of singular generalized K\"ahler spaces has yet to be developed, we leave this extension for future work.

\end{rem}


%
%
%
%
\section{Conjectural existence} 

To give a geometric motivation for Conjecture~\ref{c:intro} from the introduction, we  check that, at least for a special class of  test configurations, the Poisson K-polystability condition yields information on the slope at infinity of the Mabuchi $1$-form constructed in \cite{ASU}, see Lemma~\ref{l:product} and Remark~\ref{r:slope} below for the  expected relationship between this slope and Poisson $K$-polystability.

We first prove a general lemma on the Futaki character $\Futc_{\poiss, \alpha}: {\rm Lie}(\Aut_{\rm red}(X, \poiss)) \to \C$ associated to $\GK_{\poiss, \alpha}$ (see \cite[Thm.\ 4.18]{ASU}), where $\Aut_{\rm red}(X, \poiss):= \Aut_{\rm red}(X) \cap \Aut(X, \poiss)$ stands for the group of reduced Poisson automorphisms of $(X, \poiss)$ (see \cite[Def.\ 4.12 \& Rem.\ 4.10-(2)]{ASU}).
\begin{lemma}\label{l:GK-Futaki=K-Futaki} Let $(X,\poiss)$ be a compact holomorphic Poisson K\"ahler manifold and let $\alpha$ a K\"ahler deRham class such that there exists a symplectic GK structure $F\in \GK_{\poiss,\alpha}$. Suppose $V$ is a $F$-Hamiltonian vector field which is also real holomorphic, and hence corresponds to an element of the Lie algebra of $\Aut_{\rm red}(X,\poiss)$.
Then $\Futc_{\poiss, \alpha}(JV)=\Futc_{\alpha} (JV)$ where $\Futc_{\alpha}$  is the K\"ahler Futaki character of $(X, \alpha)$.
\end{lemma}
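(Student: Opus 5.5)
Both Futaki characters are independent of the representative: the GK one on the component of $\GK_{\poiss,\alpha}$ (by \cite[Thm.\ 4.18]{ASU}), the K\"ahler one on the K\"ahler class. So the plan is to compute each on convenient representatives and connect them by a continuity argument in the Poisson parameter.

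The first step is to fix the data. Let $V$ be $F$-Hamiltonian and real holomorphic, so its flow preserves $J$ and $F$, hence also $I=-F^{-1}J^*F$, $g$, $Q$ and $\poiss$. The compact torus $T$ given by the closure of the flow of $V$ in the isometry group of $g$ then preserves all of the GK data. Recall the formula from \cite{ASU}:
\[
\Futc_{\poiss,\alpha}(JV)=\int_M h_V\,\Gscal(F,J)\,dV_F ,
\]
up to normalization, where $h_V$ is the $F$-Hamiltonian of $V$ normalized to have zero mean. The K\"ahler Futaki invariant is the analogous integral $\int_X h^\omega_V\,\Scal(\omega)\,\omega^n/n!$ for any $T$-invariant K\"ahler form $\omega\in\alpha$ for which $V$ is Hamiltonian.

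The second step is to connect $F$ to a K\"ahler form inside $T$-invariant structures. Consider the rescaled Poisson structures $s\poiss$ for $s\in[0,1]$. I would produce a $T$-invariant path $F_s\in\GK_{s\poiss,\alpha}$ with $F_1=F$ and $F_0$ K\"ahler. One option is Gualtieri's Hamiltonian deformation (\autoref{thm:gg-def}), applied to a $T$-invariant K\"ahler form on $X$; it is $T$-invariant by its functoriality, but only gives small $s$. Alternatively, I would use the rescaling of the symplectic-type GK data directly, namely $I_s=J-sQF$ in the spirit of \eqref{eq:IJQF}, but this is integrable only in special situations. The cleaner route is to avoid a path altogether. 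Since $\Futc_{\poiss,\alpha}(JV)$ is a cohomological and equivariant quantity, I would express it as an equivariant integral: the equivariant class of $F-h_V$ paired with an equivariant first Chern class. Because $F$ tames $J$, the Chern class of $(TM,J)$ equals that of $(TM,I)$, and it is computed by the Bismut/Chern Ricci form.

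Concretely, using \eqref{f:reg10} or the identity $\Gscal=\tr_{F_+}dF_+d\Phi$ together with the Ricci form $\rho$ of $(TM,J)$ with respect to the volume form $dV_F$, I would show that
\[
\Gscal\, dV_F = 2\rho\wedge F^{n-1}/(n-1)! + d(\cdots),
\]
with $\rho$ a closed form representing $2\pi c_1(X)$, and $\Gscal=\Delta_V$-potential. Equivariantly, $h_V\Gscal\,dV_F$ then becomes the top-degree part of $(\rho+\Delta\text{-term})\,(F-h_V)^n$ up to exact terms. Here $\rho$ is extended equivariantly by the divergence of $V$ with respect to $dV_F$. By localization, or simply by the Duistermaat--Heckman-type invariance of equivariant integrals, the result depends only on the equivariant classes $[F-h_V]$ and $c_1^T(X)$, and those coincide with the ones for a $T$-invariant K\"ahler $\omega\in\alpha$. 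This yields $\Futc_\alpha(JV)$.

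The main obstacle is establishing the pointwise identity for $\Gscal\,dV_F$ modulo exact, equivariantly closed corrections. Concretely, one has to check that the Ricci form of $dV_F$ (relative to $J$), plus $V$-contraction terms, has the same equivariant class as the K\"ahler Ricci form. I would first try to take this directly from the momentum-map computation in \cite{ASU}/\cite{Goto-moment}, where $\Gscal$ arises as the momentum map pairing with the equivariant Chern--Weil form.
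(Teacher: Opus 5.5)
Your treatment of the K\"ahler side is fine: the K\"ahler Futaki invariant depends only on $[\omega-h^\omega_V]_{\T}$ and $c_1^{\T}(X)$. The GK side, however, is where all the content of the lemma lies, and there the proposal has a genuine gap.

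You would need an identity writing $h_V\,\Gscal(F,J)\,dV_F$ as the top-degree part of an equivariantly closed form built from $F-h_V$ and a $\T$-equivariant Chern--Weil representative of $c_1(TM,J)$, modulo $d_{\T}$-exact forms. Once integrated, such an identity already contains the statement of the lemma. So the proposal defers everything to an unproved step, and the tool you suggest cannot supply it:

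\begin{itemize}
\item Formula \eqref{f:reg10} holds only when $I\pm J$ are both invertible. Since $I-J=-QF$ by \eqref{eq:IJQF}, this means $\poiss$ is nondegenerate everywhere. That forces $K_X$ to be trivial, a case in which the lemma has no content.
\item In the paper, \eqref{f:reg10} is used only to derive \emph{local} universal tensor identities via the approximation method of \cite{AFSU}. The function $\Phi=\log(F_+^n/F_-^n)$ has no global meaning once $\poiss$ degenerates.
\item A non-equivariant identity $\Gscal\,dV_F=2\rho\wedge F^{n-1}/(n-1)!+d(\cdots)$ would only recover the total integral of $\Gscal$, which is already known to be topological \cite[Rem.~3.9]{ASU}. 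The $V$-dependent corrections are the whole point, and you do not address them.
\end{itemize}

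The missing idea is that no integrability is needed along a path; worrying about integrability is what made you abandon your first, correct instinct. The paper's argument runs as follows.

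\begin{enumerate}
\item Take a $\T$-invariant K\"ahler form $\omega\in\alpha$, which exists since $\T<\Aut_{\rm red}(X)$.
\item Both $\omega$ and $F$ tame $J$, so the segment $(1-t)\omega+tF$ consists of cohomologous $\T$-invariant symplectic forms. The $\T$-equivariant Moser lemma then gives a $\T$-equivariant diffeomorphism $\psi$ with $\psi^*F=\omega$.
\item Both $\tilde J:=\psi^*J$ and $J$ itself lie in the space $\AGK_\omega^{\T}$ of $\T$-invariant almost complex structures tamed by $\omega$, which is connected (indeed contractible).
\item On $\AGK_\omega$, Goto's scalar curvature is a momentum map for $\Ham(\omega)$. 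So for $\xi\in{\rm Lie}(\T)$, the differential of $J'\mapsto\int_X h^{\xi}_{\omega}\,\Goto(\omega,J')\,\omega^{[n]}$ is, up to sign, $\boldsymbol{\Omega}(\mathcal{L}_\xi J',\cdot)$. This vanishes identically on $\AGK_\omega^{\T}$, so the function is constant there.
\item At $\tilde J$ the function equals $\Futc_{\poiss,\alpha}(JV)$, by diffeomorphism invariance and because $\Goto=\Gscal$ on integrable GK structures. At $J$ it equals $\Futc_\alpha(JV)$, because $\Goto(\omega,J)=\Scal(\omega)$.
\end{enumerate}

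This is \cite[Prop.~4.16]{ASU}, and it is how the paper concludes.
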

\begin{proof} Notice that $V$ is a Killing symmetry of the biHermitian structure $(g, J, I)$ defined by $(F, J)$. It thus generates a compact torus $\T \subset \Aut_{\rm red}(X, \poiss)$, see \cite[Def.\ 4.12]{ASU}. As $\T$ preserves $F$ and $\T<\Aut_{\rm red}(X, \poiss)$, $\T$  is $F$-Hamiltonian by \cite[Lemma 4.7]{ASU}. We denote by
$\psi_{(V, F)}$ the Hamiltonian potential of $V$ with zero mean with respect to $F$.  Then (see \cite[Thm.4.18]{ASU}) $\Futc_{\poiss, \alpha}(JV)=\int_X \psi_{(V, F)}\Gscal(F, J) dV_F$. As $\alpha$ is a K\"ahler class and $\T < \Aut_{\rm red}(X)$, $X$ admits a $\T$-invariant K\"ahler metric $\omega \in \alpha$ with $\T< \Ham_{\omega}$. We then have $\Futc_{\alpha}(JV)=\int_X \psi_{(V, \omega)} \Scal_{(\omega, J)}dV_{\omega}$, where $\psi_{(V,\omega)}$ is the normalized Hamiltonian  potential of $V$ with respect to $\omega$ and $\Scal_{(\omega, J)}$ is the scalar curvature of $(\omega, J)$. By  the $\T$-equivariant Moser Lemma, we can map $(F, J)$  to a $\T$-invariant GK structure $(\omega, \tilde J)$ where $\tilde J$ is an element of the space $\AGK_{\omega}^{\T}$ of $\T$-invariant almost complex structures tamed by $\omega$. The lemma follows by  \cite[Prop.\ 4.16]{ASU} (and the fact that $\Futc_{\poiss, \alpha}(JV)$ is invariant under diffeomorphisms) . \end{proof}

\begin{lemma}\label{l:product} If there exists $\lambda \in \C$ such that $(X, \lambda\poiss, \alpha)$ admits a cscGK structure then ${\rm DF}(\tstX, \tstA)=0$ for any product $\poiss$-Poisson K\"ahler test configuration $(\tstX, \Poiss, \tstA)$.
\end{lemma}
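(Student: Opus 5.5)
Our plan is to reduce to the vanishing of a Futaki character. First, by Proposition~\ref{prop:poisson-1ps}, a product $\poiss$-Poisson K\"ahler test configuration $(\tstX,\Poiss,\tstA)$ is isomorphic to $X\rtimes_\rho\PP^1$ for a one-parameter subgroup $\rho:\C^*\to\Aut_{\rm red}(X)$ that stabilizes $\poiss$ relative to $\alpha$. Conjugating by an element of $\Aut(X,\alpha)$ does not change the isomorphism class of the test configuration (Lemma~\ref{prop:subgroups}) or its Donaldson--Futaki invariant. We may therefore assume that $\rho$ takes values in $\Aut_{\rm red}(X,\poiss)$. Since the conditions are invariant under rescaling $\poiss$ by $\lambda\neq 0$, we may also replace $\poiss$ by $\lambda\poiss$. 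If $\lambda=0$, the hypothesis is the existence of a cscK metric, and the statement is classical, so we may assume $\lambda\neq0$ and $\poiss$ itself admits a cscGK structure $F\in\GK_{\poiss,\alpha}$.

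Next we recall the standard identification of the Donaldson--Futaki invariant of a product test configuration with the Futaki invariant. For $\tstX=X\rtimes_\rho\PP^1$ with a class $\tstA$ restricting to $\alpha$, the intersection formula \eqref{eq:DF} yields $\DF(\tstX,\tstA)=c\,\Futc_\alpha(\xi)$ for a universal nonzero constant $c$, where $\xi$ is the generator of $\rho$. This can be computed with an $\Sph^1$-invariant K\"ahler metric built by the momentum construction, as in the proof that $X\rtimes_\rho\PP^1$ is K\"ahler. Here we use the known result for cohomological test configurations (e.g.~\cite{s-dyrefelt1,dervan-ross}), and we note that the normalizing class $\epsilon[\cO(1)]$ does not affect the value.

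It then remains to show $\Futc_\alpha(\xi)=0$. By \cite[Thm.~4.18]{ASU}, the Futaki character $\Futc_{\poiss,\alpha}$ vanishes on ${\rm Lie}(\Aut_{\rm red}(X,\poiss))$ when a cscGK structure exists. To compare it with $\Futc_\alpha$ via Lemma~\ref{l:GK-Futaki=K-Futaki}, we need $\xi=JV$ with $V$ real holomorphic and $F$-Hamiltonian. Write $\xi=JV$, where $V$ generates the compact circle $\rho(\Sph^1)$. By the generalized Matsushima--Lichnerowicz theorem \cite{ASU,Goto-Math.Ann}, $\Aut_{\rm red}(X,\poiss)$ is the complexification of the isometry group $K_F$ of the cscGK structure. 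Hence, after conjugating $\rho$ within $\Aut_{\rm red}(X,\poiss)$, which again leaves $\DF$ unchanged, the circle $\rho(\Sph^1)$ lies in $K_F$. Then \cite[Lemma 4.7]{ASU} shows $V$ is $F$-Hamiltonian, and Lemma~\ref{l:GK-Futaki=K-Futaki} gives $\Futc_\alpha(JV)=\Futc_{\poiss,\alpha}(JV)=0$. Therefore $\DF(\tstX,\tstA)=0$.

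The main obstacle is this conjugation step: it requires that every compact torus in the reductive group $\Aut_{\rm red}(X,\poiss)$ be conjugate into the maximal compact $K_F$. We obtain this from the Cartan--Iwasawa theorem, once the generalized Matsushima theorem identifies $K_F^{\C}$ with the whole of $\Aut_{\rm red}(X,\poiss)$ rather than only its identity component's reduced part. A secondary point is to make sure the DF/Futaki identification applies to arbitrary K\"ahler classes $\tstA$ on the product, and not only to those coming from a momentum construction. We handle this by noting that any two such classes differ by pullbacks from $\PP^1$ and fibre-component classes, which change neither term of \eqref{eq:DF}.
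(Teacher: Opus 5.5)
Your proposal is correct and follows essentially the paper's route: identify $\DF$ of the product configuration with (minus) the K\"ahler Futaki invariant of the generator, arrange for the cscGK structure to be invariant under the corresponding circle, and conclude with Lemma~\ref{l:GK-Futaki=K-Futaki} together with the vanishing of the GK Futaki character (ASU, Thm.~4.18). The paper instead works directly with the $\C^*$-action on the central fibre $(X_0,\poiss_0,\alpha_0)\cong(X,\poiss,\alpha)$ and cites ASU, Thm.~4.13 (invariance under a maximal torus), leaving implicit the conjugation you spell out via Proposition~\ref{prop:poisson-1ps} and Cartan--Iwasawa. One small correction to your side remark: replacing $\tstA$ by $\tstA+c\,\pi^*[\mathrm{pt}]$ does change each of the two terms of \eqref{eq:DF}, by $\mp c\,\big(c_1(X)\cdot\alpha^{n-1}\big)/(n-1)!$ respectively, and only their sum is unchanged.
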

\begin{proof} Let $(\tstX, \Poiss, \tstA)$ be a product Poisson test configuration of $(X, \poiss, \alpha)$.  In this case,  $\tstX$ is smooth,  $\pi$ is a submersion, and  the central fiber $(X_0, \poiss_0, \alpha_0)$ is Poisson biholomorphic to $(X, \poiss, \alpha)$.  Furthermore, $(X_0, \poiss_0, \alpha_0)$ is invariant under the $\C^*$-action of $\tstX$ and, as observed in the proof of Proposition~\ref{p:F_2-poisson-stability},  the induced holomorphic fundamental vector field $-\xi_0 + \i J\xi_0$ on $X_0$ by the $\C^*$-action $\rho$ belongs to $\Aut_{\rm red}(X_0)$, and hence (see \cite[Rem.\ 4.10(2)]{ASU}) in $\Aut_{\rm red}(X_0, \lambda\poiss_0)$ for any $\lambda\in \C$.
We know that  for a product K\"ahler test configuration ${\rm DF}(\tstX, \tstA) =  -\Futc_{\alpha_0}(\xi_0)$ where $\Futc_{\alpha_0}$ is the (K\"ahler) Futaki invariant on $(X_0, \alpha_0)$. As $(X_0, \lambda\poiss_0, \alpha_0) \cong (X, \lambda\poiss, \alpha)$ admits a cscGK structure by assumption,  and the latter is invariant under a maximal torus in $\Aut_{\rm red}(X_0, \lambda \poiss)$ (see \cite[Thm.\ 4.13]{ASU})  and $\Futc_{\lambda\poiss,\alpha}=0$ (see \cite[Thm.\ 4.18]{ASU}), we conclude by Lemma~\ref{l:GK-Futaki=K-Futaki}  that  $\Futc_{\lambda\poiss_0, \alpha_0}(\xi_0)=\Futc_{\alpha_0}(\xi_0)=0$.
\end{proof}

\begin{prop}\label{p:smooth-ray} Let $(\tstX, \Poiss,\tstA)$ be a smooth Poisson K\"ahler test configuration of $(X, \poiss, \alpha)$,  such that the morphism $\pi : \tstX \to \PP^1$ is a smooth submersion. Suppose, furthermore, that $\tstX$ admits an $S^1$-invariant symplectic GK structure $\Omega \in \tstA$ with holomorphic Poisson tensor $\Poiss$, where the (holomorphic Poisson) $S^1$-action on $\tstX$ is induced by the $\C^*$-action $\rho$. Letting $z:=e^{-t + is} \in \C^*$, denote by $\Omega_t:= \rho(z)^*\Omega$ the family of symplectic GK structures on $(\tstX, \Poiss,\tstA)$ and by $F_t := (\Omega_t)_{|_{X_1}} \in \GK_{\poiss,\alpha}$ the induced family of symplectic GK structures on  $X_1:=\pi^{-1}(1) \cong (X, \alpha, \poiss)$.  Then the Mabuchi $1$-form $\boldsymbol{\tau}$ on the space $\GK_{\poiss,\alpha}$ (introduced in \cite[Prop.\ 4.5]{ASU}) satisfies
\[ \lim_{t\to \infty} \boldsymbol{\tau}_{F_t}(\dot{F_t}) =  {\rm DF}(\tstX, \tstA). \]
\end{prop}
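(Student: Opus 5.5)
We follow the classical route for K\"ahler test configurations: identify $\dot F_t$ with the variation generated by the $S^1$-action, write $\boldsymbol{\tau}_{F_t}(\dot F_t)$ as an integral over the fibre $X_1$, and push it to the fibre over $0$ via equivariance. Since $\Omega_t=\rho(z)^*\Omega$ with $z=e^{-t+is}$, the restriction $F_t$ to $X_1$ is identified, via $\rho(e^{-t})$, with $\Omega|_{X_{e^{-t}}}$ pulled back to $X_1$. Hence $\dot F_t = -\mathcal{L}_{J\xi}\Omega$ restricted, where $\xi$ generates the $S^1$-action. Because $\Omega$ is $S^1$-invariant and $\Poiss$-compatible, on a neighbourhood of $X_0$ we can write $\iota_\xi\Omega = -d h$ locally along fibres. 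Thus $\dot F_t$ is the infinitesimal variation in $\GK_{\poiss,\alpha}$ induced by the function $h|_{X_{e^{-t}}}$, in the sense of the parametrization of the tangent space of $\GK_{\poiss,\alpha}$ from \cite[Prop.\ 4.5]{ASU}. There the Mabuchi 1-form is given by $\boldsymbol{\tau}_F(\dot F)=-\int_X \dot\phi\,(\Gscal(F,J)-\underline{\Gscal})\,dV_F$.

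The first step is therefore the formula
\[
\boldsymbol{\tau}_{F_t}(\dot F_t) = -\int_{X_{e^{-t}}} h\,\big(\Gscal(\Omega|_{X_{e^{-t}}})-\underline{\Gscal}\big)\, dV_{\Omega|},
\]
using diffeomorphism invariance of $\Gscal$. Since $\pi$ is a smooth submersion, the fibres $X_\tau$ and their induced GK structures (Lemma \ref{lem:GK-restrict}) vary smoothly up to $\tau=0$. Letting $t\to\infty$, the limit is the same integral on the central fibre $X_0$. There $\xi$ is tangent to the fibre and is an $\Omega|_{X_0}$-Hamiltonian real holomorphic Killing field, so the limit is the GK Futaki invariant $\Futc_{\poiss_0,\alpha_0}(J\xi_0)$ from \cite[Thm.\ 4.18]{ASU}, up to sign.

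Next, Lemma \ref{l:GK-Futaki=K-Futaki} converts this into the K\"ahler Futaki invariant $\Futc_{\alpha_0}(J\xi_0)$ of $(X_0,\alpha_0)$. Since $\pi$ is a submersion with compact fibres, Fischer--Grauert and Lemma \ref{prop:subgroups} show that $(\tstX,\tstA)$ is a product test configuration. For such configurations the standard identity $\DF(\tstX,\tstA)=-\Futc_{\alpha_0}(\xi_0)$ (already used in the proof of Lemma \ref{l:product}) finishes the argument, once signs and normalizations are matched against \eqref{eq:DF}.

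The main obstacle is the first step: rigorously identifying $\dot F_t$, which is a variation through GK structures obtained by pulling back along the non-Hamiltonian flow of $J\xi$, with the Hamiltonian-type tangent vector to $\GK_{\poiss,\alpha}$ on which $\boldsymbol{\tau}$ is defined. Concretely, one must check that the flow of $J\xi$ restricted to the fibres corresponds, after a fibrewise Moser isotopy, to the deformation generated by $h$ through the $\poiss$-Hamiltonian description of \cite[Prop.\ 2.17]{ASU}. One must also check that the error term is exact, so it does not contribute to $\boldsymbol{\tau}$, which is closed. I would first attempt this by working on $\tstX$ with the $S^1$-invariant GK structure $\Omega$ and using the momentum-map interpretation of $\Gscal$ from \cite{ASU} applied equivariantly, so that $\boldsymbol{\tau}(\dot F_t)$ becomes the pairing of the momentum map with $J\xi$ along the orbit.
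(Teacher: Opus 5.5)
Your overall route (rewrite $\boldsymbol{\tau}_{F_t}(\dot F_t)$ as an integral over $X_{e^{-t}}$, pass to $X_0$, then apply \cite[Thm.~4.18]{ASU} and Lemma~\ref{l:GK-Futaki=K-Futaki}) is the paper's. However, the step you flag as the main obstacle is left open, and the mechanism you suggest for it does not work. Closedness of $\boldsymbol{\tau}$ says nothing about its value on a given tangent vector. Moreover, every tangent vector $d\,I\,d\phi$ to $\GK_{\poiss,\alpha}$ is an exact $2$-form anyway, so ``the error term is exact'' cannot make anything drop out.

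No fibrewise Moser isotopy is needed; what is missing is a \emph{global} Hamiltonian statement on $\tstX$. Since $\tstX$ is K\"ahler and $\hat\xi:=\rho_*(\partial_t)$ vanishes on $X_\infty$, \cite[Lemma~4.7, Rem.~4.10]{ASU} give $\hat\xi=\Omega^{-1}(df)+J_{\tstX}\Omega^{-1}(dh)$ on $\tstX$. The $S^1$-invariance of $\Omega$ forces $f$ to be constant, so $\hat\xi=J_{\tstX}\Omega^{-1}(dh)$. Your ``$\iota_\xi\Omega=-dh$ locally along fibres'' is both vacuous (every closed $1$-form is locally exact) and ill-posed, since $\xi$ is not tangent to $X_\tau$ for $\tau\neq 0,\infty$. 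The relation $I_{\tstX}=-\Omega^{-1}J_{\tstX}^*\Omega$ then gives directly $\mathcal{L}_{\hat\xi}\Omega=d\,I_{\tstX}dh$ (cf.\ the proof of \cite[Lemma~4.8]{ASU}). Hence $\dot\Omega_t=d\,I\,dh_t$, with $h_t=\rho(z)^*h$ and $I$ the $\Omega_t$-conjugate complex structure. Restricting to $X_1$, which is $I$-complex by Lemma~\ref{lem:GK-restrict}, yields $\dot F_t=d\,I\,d(h_t|_{X_1})$, i.e.\ $\dot F_t=[h_t|_{X_1}]$ under \cite[Rem.~2.18]{ASU}. This is exactly what your first displayed formula presupposes.

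Second, you assert that the submersion hypothesis, together with Fischer--Grauert and Lemma~\ref{prop:subgroups}, makes $(\tstX,\tstA)$ a product test configuration; this is false. Fischer--Grauert needs \emph{all} fibres, including $X_0$, to be biholomorphic, and smooth $\C^*$-equivariant families can jump (e.g.\ $\PP^1\times\PP^1$ degenerating to the Hirzebruch surface $\mathbb{F}_2$). If the assertion were true, the proposition would only ever concern product configurations. You do not need it: the identity $\DF(\tstX,\tstA)=-\Futc_{\alpha_0}(\xi_0)$ holds for any test configuration with smooth central fibre (Ding--Tian, Wang, Odaka, Lahdili, Sj\"ostr\"om Dyrefelt), and that is what the paper invokes.
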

\begin{proof} This is a straightforward generalization of the original argument in the K\"ahler case by Ding--Tian~\cite{DT}. Denote by $\hat \xi:= \rho_*\left(\frac{\partial}{\partial t}\right)$ the real Poisson holomorphic vector field on $(\tstX, \Poiss)$ which generates the $\R_+$-action via $\rho$. As $\tstX$ is a K\"ahler manifold and 
$\hat \xi$ vanishes on the fiber $X_{\infty} := \pi^{-1}(\infty)$ (here we have identified $\PP^1 = \{0\}\cup \C^* \cup\{\infty\}$), by \cite[Lemma\ 4.7 \& Rem.\ 4.10-(2)]{ASU}, there are smooth functions $h, f$ on $\tstX$ such that
$\hat \xi = \Omega^{-1}(df) + J_{\tstX} \Omega^{-1} (dh),$ where $J_{\tstX}$ stands for the complex structure of $\tstX$ and $I_{\tstX}$ will later denote the $\Omega$-conjugate complex structure.  By the proof of \cite[Lemma\ 4.8]{ASU},
\begin{equation}\label{infinitesimal}
\mathcal{L}_{\hat \xi} \Omega = dI_{\tstX} d h. \end{equation} 
As the functions $h$ satisfying (\ref{infinitesimal}) are unique up to an additive constant, and using that $\Omega$  and $I_{\tstX}$ are $S^1$-invariant, we can assume that $h$ is a smooth $S^1$-invariant function. Furthermore, 
as $J_{\tstX} \hat\xi = \rho_*\left(\frac{\partial}{\partial s}\right)$ is the generator of $S^1$-action and $\Omega$ is $S^1$-invariant, if follows that $f$ is constant, i.e. 
\begin{equation}\label{gradient} \hat \xi = J_{\tstX}\Omega^{-1}(dh).\end{equation}
It also follows from \eqref{infinitesimal} that 
\begin{equation}\label{dot-Omega} \dot{\Omega}_t:= \frac{d}{dt} \Omega_t = d I_{\tstX} d h_t, \qquad h_t:= \rho(e^{-t+is})^*h.\end{equation}

Recall that the tangent space of ${\GK}_{\poiss, \alpha}$ at $F$ is identified in 
\cite[Rem.\ 2.18]{ASU} with $C^{\infty}(X, \R)/{\R}$ which in  turn is isomorphic to the space $C^{\infty}_{0,F}(X, \R)$ of smooth functions of zero mean with respect to $dV_F :=F^{[n]}= \frac{F^n}{n!}$. The definition of $\boldsymbol{\tau}$ in \cite{ASU} is then
\[ \boldsymbol{\tau}_F(\phi) := -\int_X \phi \Gscal(F, J_0) dV_F, \qquad \phi \in C^{\infty}_{0,F}(X, \R). \]
As observed in \cite[Rem.\ 3.9]{ASU}, the mean value $\overline{\mu}$ of $\Gscal(F,J_0)$  with respect to $dV_F$ is a topological invariant independent of the choice of $F\in \GK_{\poiss, \alpha}$, so we can equivalently define
\[ \boldsymbol{\tau}_F([\psi])= -\int_X \psi\left(\Gscal(F, J_0) -\overline{\mu}\right) dV_F, \qquad [\psi] \in C^{\infty}(X, \R)/\R.\]
With this at hand, in what follows $F_t:=(\Omega_t)_{|_{X_1}}$ will denote the restriction of $\Omega_t$  to the fiber $X_1=\pi^{-1}(1)$ which  we will identify $X_1 \equiv (X, \poiss, \alpha)$ through the properties of the test configuration.  By Lemma~\ref{l:GK-test}, $F_t \in \GK_{\poiss,\alpha}$.
By \eqref{dot-Omega}, $ \dot{F}_t = d I_X d \left((h_t)_{|_{X_1}}\right)$ so that $\dot F_t =[(h_t)_{|_{X_1}}] \in C^{\infty}(X_1, \R)/\R$. 
We thus compute
\[
\begin{split}
\boldsymbol{\tau}_{F_t}(\dot{F}_t) &= -\int_{X_1}h_t\left(\Gscal(F_t, J_0)-\overline{\mu}\right) F_t^{[n]}\\
&= -\int_{X_1}h_t\left(\Gscal(F_t, J_0)-\overline{\mu}\right) \Omega_t^{[n]} \\
&= -\int_{X_1} \rho(z)^*\left(h_{|_{X_{z}}} \left(\Gscal(\Omega_{|_{X_{z}}}, J_{|_{X_z}})-\overline{\mu}\right)\Omega^{[n]}_{|_{X_{z}}}\right) \\
&= -\int_{X_{z}}h_{|_{X_{z}}} \left(\Gscal(\Omega_{|_{X_{z}}}, J_{|_{X_z}})-\overline{\mu}\right)\Omega^{[n]}_{|_{X_{z}}}.
\end{split}
\]
It  follows from the above
\begin{equation}\label{limit} 
\begin{split}
\lim_{t\to \infty} \boldsymbol{\tau}_{F_t}(\dot{F}_t) &= \lim_{z \to 0}-\int_{X_{z}}h_{|_{X_{z}}} \left(\Gscal(\Omega_{|_{X_{z}}}, J_{|_{X_z}})-\overline{\mu}\right)\Omega^{[n]}_{|_{X_{z}}}\\
&= -\int_{X_0} h_{|_{X_{0}}} \left(\Gscal(\Omega_{|_{X_0}}, J_{|_{X_0}})-\overline{\mu}\right)\Omega_{|_{X_{0}}}^{[n]},
\end{split}
\end{equation}
where for the last equality we used that $\pi: \tstX \to \PP^1$ is a smooth submersion and Ehresmann's fibration lemma.  With a slight abuse of notation, we shall denote from now on by $F_0 := \Omega_{|_{X_0}}$ the symplectic GK structure induced by $\Omega$ on the (necessarily smooth by our hypothesis on $\tstX$) central fiber $(X_0, \poiss_0, \alpha)$, and further denote $h_0:=h_{|_{X_0}}$, $\xi_0 :=\hat\xi_{|_{X_0}}$.  Notice that $J\xi_0$ induces a holomorhic Poisson $S^1$-action on $(X_0, \poiss_0, \alpha_0)$  and $F_0 \in \GK_{\poiss_0, \alpha_0}$ is an $S^1$-invariant compatible GK structure. Furthermore, as $\hat \xi$ is tangent to $X_0$ (and denoted by $\xi_0$ on the central fiber), \eqref{gradient} yields $\xi_0 = J_0F_0^{-1}(dh_0)$.
By \cite[Thm.\ 4.18]{ASU} (and using that the mean of $\Gscal(F_0, J_0)$ equals $\overline{\mu}$,  see \cite[Rem.\ 3.9]{ASU}) we can thus rewrite \eqref{limit} as
\[ \lim_{t\to \infty}\boldsymbol{\tau}_{F_t}(\dot{F}_t)= -\Futc_{\poiss_0,\alpha_0}(\xi_0),\]
where, we recall,  $\Futc_{\poiss_0, \alpha_0}(\xi_0)= \int_X h_0\left(\Gscal(F_0, J_0) -\overline{\mu}\right)dV_{F_0}$ is the Futaki invariant associated to $\GK_{\poiss_0,\alpha_0}^0$.   By Lemma~\ref{l:GK-Futaki=K-Futaki} 
\[ \Futc_{\poiss_0,\alpha_0}(\xi_0) =\Futc_{\alpha_0}(\xi_0),\]
where $\Futc_{\alpha_0}(\xi_0)$ is the usual Futaki invariant of the K\"ahler manifold $(X_0, \alpha_0)$.   It is now a well-established fact (see \cite{dervan-ross,DT,odaka,lahdili2, s-dyrefelt1, wang}) that  ${\rm DF}(\tstX, \tstA)=-\Futc_{\alpha_0}(\xi_0)$. This concludes the proof. 
\end{proof}

\begin{rem}\label{r:slope} In one direction, Conjecture~\ref{c:intro} predicts that if $(X, \poiss, \alpha)$ admits a cscGK structure  $F_0 \in \GK_{\poiss,\alpha}$,  then for any $\poiss$-Poisson K\"ahler test configuration $(\tstX, \Poiss, \tstA)$ of $(X, \poiss, \alpha)$ one has the inequality  ${\rm DF}(\tstX, \tstA) \geq 0$ with equality if and only if $(\tstX, \Poiss, \tstA)$ is Poisson product. The naive idea, which has been implemented in the cscK case by the deep works~\cite{BB, BDL, C, PS}, consists of associating to each test configuration a ray $F_t \in \GK_{\poiss, \alpha}, \, t\in [0, \infty)$ of \emph{geodesics} (see \cite{ASU} for the precise definition of the geodesic condition) emanating from $F_0$. If this were possible and the geodesic ray were smooth, 
the monotonicity property of the Mabuchi $1$-form $\boldsymbol{\tau}$ along smooth geodesics established in \cite[Prop.~5.7]{ASU}  and the fact that $\boldsymbol{\tau}_{F_0}=0$ would imply $\lim_{t\to \infty}\boldsymbol{\tau}_{F_t}(\dot{F}_t) \geq 0$ with possible equality only if $F_t$ is generated by the flow of a real holomorphic vector field on $X$.  Proposition~\ref{p:smooth-ray} will then yield the result. However, even in the cscK case,  one cannot obtain in general a \emph{smooth} geodesic ray as above. Instead, the results in \cite{C,PS} yield the existence of a $C^{1,1}$ geodesic ray whereas the results in \cite{BB, BDL, s-dyrefelt1} extend the definition, the monotonicity property  and the asymptotic at infinity  of $\boldsymbol{\tau}$ for such weak geodesic rays. Neither of these results is as of yet available in the GK setup. Nevertheless, Conjecture~\ref{c:intro} has been confirmed in \cite{ASU} for \emph{toric} Poisson K\"ahler manifolds $(X, \poiss, \alpha, \T)$, where the notion of Poisson K-polystability was replaced by a similar notion of \emph{uniform} (Poisson) K-stability on \emph{toric} test configurations~\cite{do-02}, noting that these are automatically Poisson test configuration for any toric Poisson structure hence the Poisson tensor does not affect the K-stability notion. 
\end{rem}

A  ramification of  Conjecture~\ref{c:intro} in the case when $(X,L)$  is a polarized projective manifold which admits a cscK metric in $2\pi c_1(L)$ (and hence is K-polystable by \cite{BDL,stoppa}) reads as follows:

\begin{conjecture}\label{thm-main}  Let $(X, L)$ be a smooth  polarized projective manifold which admits a cscK metric in $2\pi c_1(L)$ and $\poiss \in H^0(X, \Wedge^2 T^{1,0}_X)$ be a non-zero holomorphic Poisson tensor. Then the following are equivalent:

\begin{enumerate}
\item The bivector $\poiss$ is polystable for the linear action of $\Aut_{\rm red} (X)$ on $H^0(X, \Wedge^2 T^{1,0}_X)$;

\item There exists $\epsilon >0$ such that for $|\lambda|< \epsilon$, $(X, \lambda\poiss, 2\pi c_1(L))$ admits a cscGK structure.
\end{enumerate}
\end{conjecture}

\noindent 
Using the Yau--Tian--Donaldson correspondence, Theorem~\ref{thm:main} in the Introduction establishes the direction $(1) \Rightarrow (2)$ of Conjecture~\ref{thm-main} in the Fano case.  This conjecture represents an initial foray into a much larger question of K-stability for generalized K\"ahler manifolds, on which we make some remarks:

\begin{rem}\label{rmk:caveats}
Regarding the conjecture more generally, several remarks and caveats are in order:
\begin{enumerate}
\item When $H^{2,0}(X, \C)=0$, the existence result of Gualtieri \cite{Gualtieri-Hamiltonian} implies that for any Poisson tensor $\poiss$ and any K\"ahler class $\alpha$,  there exists a GK structure in the class $\alpha$ with Poisson tensor $\lambda\poiss$ for $\lambda$ small enough; the question is thus whether we can find such a structure with constant generalized scalar curvature.
\item The notions of Poisson test configurations, and hence Poisson K-polystability, are invariant under independent resclaings of the class $\alpha$ and the Poisson tensor $\poiss$.  On the other hand, if $F\in \alpha$ is a symplectic GK structure with Poisson tensor $\poiss$, then $t^{-1}F$ is a symplectic GK structure with Poisson tensor $t \poiss$, so the conjecture is only invariant under this particular combination of scalings.
\item Determining the constant $\epsilon$ in the condition (1) of the Conjecture is  an interesting problem specific to GK geometry,  which is challenging even for the simplest examples.

\item Restricting the  above theory to K\"ahler deRham classes $\alpha$ is natural. If $X$ is a compact $2n$-dimensional complex torus, it admits symplectic GK structures $F$ with non-degenerate Poisson tensor $\poiss$ and non-K\"ahler  deRham class $\alpha=[F]$. In this case,  the generalized K\"ahler-Ricci flow starting from $F$ converges  at infinity to a non-degenerate symplectic GK structure $F_0\in \GK_{\poiss, \alpha}$ having zero generalized K\"ahler scalar curvature~\cite{garciafernandez2023nonkahler, streets2012generalized,streets2016pluriclosed}.  This structure is expressed as 
$F_{0}=\omega_{HK} + \beta + \bar \beta \in \alpha$, where $\omega_{HK}$ is the unique HK metric in the $(1,1)$ part of $\alpha$ in the Hodge decomposition $H^2(X, \C)=H^{1,1}(X, \C) \oplus H^{2,0}(X, \C) \oplus H^{0,2}(X, \C)$ whereas the holomorphic $(2,0)$-form $\beta$ is determined by  the $(2,0)$-part of $\alpha$ in that decomposition. Furthermore, $F_0, \beta$ and $\poiss$ are related by the identity
${\rm Im}(\beta) =\frac{1}{4} F_0\left({\rm Re}(\poiss) \right)F_0$, proving that $t\poiss, \, t\neq 1$ cannot be realized as a Poisson tensor of a symplectic GK structure in the given deRham class $\alpha$.

\item The notion  of Poisson K-polystability in the conjecture should be taken as a guiding principle, rather than the definitive statement. Even for classical K\"ahler manifolds, the YTD conjecture remains open beyond the K\"ahler--Einstein case and there are currently different versions of a correspondence as above, obtained by strengthening the K-polystability condition in various ways: e.g. by testing stability on more general objects than test configurations~\cite{boucksom-et-al, ChiLi1, piccone}, by requiring the positivity of a family of numerical invariants associated to each test configuration~\cite{DZ}, or by requiring a \emph{uniform} positivity of the Donaldson--Futaki invariant with respect to a suitable `norm' on test configurations~\cite{CC, H,ChiLi,  sz-PhD, trusiani}.  In each case, it is conceivable to explore the above conjecture under similar modifications of Poisson K-polystability.  
\item As emphasized in the K\"ahler--Einstein case in \cite{LWX, odaka1} and in cscK case in \cite{dn, fs}, one can hope to use the cscGK structures in $\GK_{\poiss, 2\pi c_1(L)}$ as an \emph{analytic} stability condition, allowing for the construction of an analytic moduli space of polarized Poisson varieties  $(X, \poiss, L)$ admitting such structures.  On the other hand, since the notion of Poisson K-stability is algebraic, one can also expect to produce reasonable moduli spaces purely algebro-geometrically.
\end{enumerate}
\end{rem}

\section{Kempf--Ness theory for tame symplectic forms}\label{a:GIT}

The classical Kempf--Ness theorem concerns the relation between stability questions in geometric invariant theory (GIT) for complex varieties, and momentum maps for an associated compatible symplectic structure.  In the proof of our main theorem, we will need to invoke some aspects of this theory under the weaker assumption that the symplectic form tames the complex structure, i.e.~that the complex structure and symplectic form together still define a positive definite metric, but the almost complex structure need no longer be orthogonal.  {This is a simpler case of the setup in \cite[Sec.5]{delloque}, where the symplectic forms come in families (cf. also \cite{dms,ortu}). For convenience, we explain below how the results we need carry over to the tame setting.}  

Consider the complex vector space $\C^k$ endowed with its standard flat K\"ahler structure $(g_0, J_0, \omega_0)$ and  fix a compact subgroup $K \subset U(k)$.  Let  $U=B(0,\varepsilon)$ be an open ball centered at the origin, and let $\omega$ be a $K$-invariant symplectic form $\omega$ of regularity $C^r$ for some $r \geq 2$.
We assume that
\begin{enumerate}
\item  The symplectic form $\omega$ tames the standard complex structure $J_0$, agrees with $\omega_0$ at the origin, and there exists $\Lambda > 0$ so that
\begin{align*}
1/\Lambda < ||\omega^{1,1}||_{g_0} \leq ||\omega||_{g_0}<\Lambda
\end{align*}
holds on $U$.
\item The action of $K$ on $(U, \omega)$ is Hamiltonian with momentum map $\mu_{\omega}^K$ such that $\mu_{\omega}^K(0)=0$.
\end{enumerate}
We fix an ${\rm ad}$-invariant Euclidean inner product $\langle \cdot, \cdot \rangle$ on the compact Lie algebra $\mathfrak{k}$ of $K$. The choice of $\langle \cdot, \cdot \rangle$ gives rise to an identification $\sharp : \mathfrak{k}^* \to \mathfrak{k}$. 
We further  let $K^{\C}$  denote the complexification of $K$, hence $K^{\C}$ is a reductive complex subgroup of $GL(k,\C)$ and we adopt the usual definition for stable points of $\C^k$ with respect to the $K^{\C}$-action, see Definition~\ref{d:GIT}.  Our goal in this subsection is to establish the following result, which can also be derived from the results in \cite[Sec. 5]{delloque}:

\begin{prop}\label{p:local-tamed-GIT} There exists an open ball $U_0=B(0, \varepsilon_0) \subset U$ such that if $p \in U_0$ is a $K^{\C}$-polystable point,  then $K^{\C}\cdot p$ contains a point $q\in U$ for which $(\mu^K_{\omega}(q))^{\sharp}$ belongs to the Lie sub-algebra of the stabilizer of $q$ in $K$. If,  furthermore,  $p \in U_0$ is a stable point, then $\mu_{\omega}^K(q)=0$.
\end{prop}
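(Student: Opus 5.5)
The plan is to run the dynamical (gradient flow) version of Kempf--Ness from \cite{dms}, adapted to the tame setting. For $p$ near the origin, consider the Kempf--Ness type function on the orbit. Since $\omega$ is only tame, we avoid the classical convexity argument based on a Kähler potential. Instead we work directly with the negative gradient flow of $\tfrac12|\mu^K_\omega|^2$ restricted to $K^{\C}\cdot p$.

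\textbf{Step 1: the flow stays in the complexified orbit.} For $\xi\in\mathfrak k$ with fundamental field $\xi_X$, we have $d\langle\mu,\xi\rangle=\omega(\xi_X,\cdot)$. The gradient of $\tfrac12|\mu|^2$ with respect to the metric $g_\omega$ (symmetrization of $\omega(\cdot,J_0\cdot)$) is then not exactly $J_0(\mu^\sharp)_X$. So we replace the gradient flow by the flow $\dot x = -J_0(\mu^K_\omega(x)^\sharp)_X$. This flow is tangent to $K^{\C}$-orbits because the action is $J_0$-complex. Along it,
\[
\tfrac{d}{dt}\tfrac12|\mu|^2=-\omega\big((\mu^\sharp)_X,J_0(\mu^\sharp)_X\big)\le -c\,|(\mu^\sharp)_X|^2_{g_0},
\]
where $c>0$ comes from the tameness bound $\|\omega^{1,1}\|>1/\Lambda$. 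Hence $|\mu|$ is non-increasing.

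\textbf{Step 2: a priori localization.} Since $\omega=\omega_0$ at $0$ and $\mu(0)=0$, we have $\mu^K_\omega=\mu_0+O(|x|^2\cdot|x|)$-type corrections, with $\mu_0(x)=\tfrac{1}{2}\langle \xi x,x\rangle$ the flat moment map. We use a Łojasiewicz/comparison estimate, as in \cite{dms}, showing that for $p\in B(0,\varepsilon_0)$ the trajectory has finite length controlled by $|\mu(p)|^{\theta}$ and never leaves $U$. This is the main obstacle. We would first try to derive a Łojasiewicz inequality $|(\mu^\sharp)_X|\ge c|\mu|^{1-\theta}$ near $0$ from the real-analytic flat case, perturbed by the $C^r$ smallness of $\omega-\omega_0$. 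Failing that, we would follow \cite[Sec.~5]{delloque}, where uniform estimates are established for families of tame forms.

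\textbf{Step 3: the limit point.} The flow converges to a point $q_\infty\in U$ with $(\mu(q_\infty)^\sharp)_X(q_\infty)=0$, i.e.\ $\mu(q_\infty)^\sharp$ lies in the stabilizer algebra of $q_\infty$. It remains to show $q_\infty\in K^{\C}\cdot p$. By construction $q_\infty\in\overline{K^{\C}\cdot p}$, and polystability means the orbit is closed, so $q_\infty\in K^{\C}\cdot p$ and we take $q=q_\infty$. In the stable case the stabilizer of $q$ is finite, so its Lie algebra is trivial, hence $\mu^\sharp(q)=0$ and $\mu^K_\omega(q)=0$.
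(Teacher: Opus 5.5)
Your overall architecture is the paper's. It consists of the momentum flow (your $\dot x=-J_0(\mu^\sharp)_X$, up to sign conventions), its confinement to $K^{\C}\cdot p$ (Lemma~\ref{l:orbit}), monotonicity of $f=\tfrac12|\mu^K_\omega|^2$ via tameness (Lemma~\ref{l:symplectic-gradient}), a Lojasiewicz length bound giving both confinement to $U$ and convergence (Lemmas~\ref{l:stability} and~\ref{flow-estimates}), and closedness of the polystable orbit.

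\textbf{The gap is Step~2.} You flag it as the main obstacle but do not carry it out, and it is exactly where the tame setting differs from the classical one. The perturbative route you suggest does not work as stated, because Lojasiewicz inequalities are not stable under $C^r$-small perturbations. This is especially so here, where the critical set $\mu_0^{-1}(0)$ of $f_0=\tfrac12|\mu_0|^2$ is a degenerate cone. For instance, take $S^1$ acting on $\C^2$ with weights $(1,-1)$ and add to $f_0$ the invariant flat function $e^{-1/|z|^2}\sin(1/|z|^2)$. This produces critical points on that cone with nonzero critical values accumulating at $0$. So no inequality at level $0$ can hold uniformly near the origin, yet a uniform inequality at level $0$ is exactly what your localization argument needs. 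Referring to \cite[Sec.~5]{delloque} is a citation, not an argument.

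The inequality you wrote, $|(\mu^\sharp)_X|\ge c|\mu|^{1-\theta}$, is also false near $0$. Since $|(\mu^\sharp)_X(x)|\le C|\mu(x)|\,|x|$, any valid exponent must exceed $1$. The gradient inequality $|\nabla f|\ge c f^{1-\theta}$ for $f=\tfrac12|\mu|^2$ actually reads $|(\mu^\sharp)_X|\ge c|\mu|^{2-2\theta}$.

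\textbf{The missing idea is the $K$-equivariant relative Moser lemma.} It makes the reduction to the flat case exact rather than perturbative. Since $\omega$ and $\omega_0$ are $K$-invariant, agree at $0$, and $\mu^K_\omega(0)=0=\mu_0(0)$, after shrinking $U$ there is a $K$-equivariant $C^{r-1}$ diffeomorphism $\phi$ from $U$ onto a neighbourhood $\widetilde U$ of $0$. It satisfies $\phi(0)=0$, $\phi^*\omega_0=\omega$ and $\mu^K_\omega=\mu_0\circ\phi$. Hence $f=f_0\circ\phi$ with $f_0$ a polynomial. The Lojasiewicz gradient inequality for $f_0$ near $0$ then transfers to $f$ with new constants, because $\phi$ is bi-Lipschitz near $0$, $df=\phi^*df_0$, and $\|\omega^{-1}df\|_g$ is comparable to $|df|_{g_0}$ there.

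Uniformity at level $0$ then comes for free. For a linear action, a critical point $x$ of $f_0$ satisfies $\mu_0(x)^\sharp\cdot x=0$. Since $\langle\mu_0(x),\xi\rangle=\tfrac12\omega_0(\xi x,x)$, this forces $|\mu_0(x)|^2=\langle\mu_0(x),\mu_0(x)^\sharp\rangle=0$. So $0$ is the only critical value of $f_0$, hence of $f$ near the origin. With this lemma in place, your Steps~2 and~3 go through as you describe.
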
 

This is a generalization of part of the classical Kempf--Ness theorem (the case when $\omega\equiv \omega_0$) to our tamed setting.  Our approach is to adapt the arguments in \cite{GRS} from the compatible to the tamed case, which rely on the \emph{momentum flow}: 
\begin{defn} A one-parameter family $p(t) \in U$ is a solution of the \emph{momentum flow} if
\begin{equation}\label{moment-flow}
 \frac{d}{dt}p(t) = J_0 V_{(\mu_{\omega}^K(p(t)))^{\sharp}}, \qquad p(0)=p_0,\end{equation}
 where for any $\xi \in \mathfrak{k}$, $V_{\xi}$ denotes the fundamental vector field on $\C^k$.
\end{defn}

A key formal property from the compatible case extends naturally here, namely that solutions of the momentum flow preserve complexified orbits:

\begin{lemma}\label{l:orbit} Let $p(t)$ be the solution of the momentum flow and $\tau(t) \in K^{\C}$ the solution of the ODE
\[ \frac{d}{dt} \tau(t) = (L_{\tau(t)})_*\left(\i \mu(p(t))^{\sharp}\right), \qquad \tau(0)={\rm Id}_{K^{\C}}. \]
Then $p(t)= \tau(t) \cdot p_0$. In particular, the momentum flow stays in $K^{\C}\cdot p_0$.
\end{lemma}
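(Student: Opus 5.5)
The plan is to show that the curve $q(t) := \tau(t)\cdot p_0$ satisfies the same ODE as $p(t)$ with the same initial value, and then invoke uniqueness of ODE solutions. The ODE for $\tau$ is coupled to $p(t)$, which is already given as the solution of the momentum flow \eqref{moment-flow}. So $\tau$ solves a linear, time-dependent ODE on the Lie group $K^{\C}\subset GL(k,\C)$ with continuous coefficient $t\mapsto \i\mu(p(t))^\sharp$. Hence $\tau$ exists on the whole interval of existence of $p(t)$, and it is at least $C^1$.

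The key step is differentiating $q(t)$. Since $(L_{\tau})_*$ means left translation, $\dot\tau = \tau\cdot \i\mu(p(t))^\sharp$ in matrix notation. Therefore
\[
\dot q(t) = \tau(t)\,\i\mu(p(t))^\sharp\, p_0 .
\]
This expression is the image under $\tau(t)_*$ of the fundamental vector field of $\i\mu(p(t))^\sharp$ at $p_0$. Because the action is linear and complex, the fundamental vector field of $\i\xi$ equals $J_0V_\xi$. To stay in the same form as the flow equation, I would use the right-translation convention instead: writing $\dot\tau = \i\mu^\sharp\,\tau$ gives $\dot q = \i\mu(p(t))^\sharp\, q(t) = J_0 V_{\mu(p(t))^\sharp}(q(t))$.

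Here lies the one subtlety and the main obstacle: which side the Lie algebra element multiplies on. With the left-translation convention we only get $\dot q = J_0 V_{{\rm Ad}_{\tau}\mu^\sharp}(q)$, which does not close up. I therefore expect the intended reading, consistent with $p(t)=\tau(t)\cdot p_0$, to be the right-invariant one, $\dot\tau\tau^{-1} = \i\mu(p(t))^\sharp$. Equivalently, one can keep the left convention but replace $p_0$ by the moving frame. In either reading the computation is a one-line chain rule. I would state the convention explicitly and note that the left-invariant version is recovered by solving for $\tau^{-1}$.

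With that in hand, $q(t)$ satisfies $\dot q = J_0V_{\mu(p(t))^\sharp}(q)$ and $q(0)=p_0$. However, the right-hand side still involves $\mu(p(t))$ rather than $\mu(q(t))$. Hence I would set up the comparison as a linear ODE in $q$ with fixed time-dependent coefficient $A(t) = \i\mu(p(t))^\sharp \in \mathfrak{gl}(k,\C)$. Then $p(t)$ itself solves $\dot p = A(t)p$, since $\dot p = J_0V_{\mu(p)^\sharp}(p) = A(t)p$, and $q$ solves the same equation. Uniqueness for linear ODEs with continuous coefficients (no regularity of $\mu$ beyond continuity is needed) gives $q\equiv p$, that is, $p(t)=\tau(t)\cdot p_0$. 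Finally, $\tau(t)\in K^{\C}$ because $\i\mu^\sharp\in \i\mathfrak k\subset \mathfrak k^{\C}$, so the flow stays in $K^{\C}\cdot p_0$.
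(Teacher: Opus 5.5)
Your argument is correct and is essentially the standard one the paper defers to (Lemma 3.2 of Georgoulas--Robbin--Salamon). You differentiate the orbit curve and treat $A(t)=\i\mu(p(t))^{\sharp}$ as a fixed continuous coefficient, so that $p$ and $\tau(t)p_0$ solve the same linear ODE; you then get $\tau(t)\in K^{\C}$ from $A(t)\in\mathfrak{k}^{\C}$.

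Your convention remark is also right. With $(L_{\tau})_*$ read literally as left translation one only gets $\frac{d}{dt}(\tau p_0)=\mathrm{Ad}_{\tau}(A)\,\tau p_0$. The identity therefore needs either the right-invariant equation $\dot\tau\tau^{-1}=A$, or the form $p=\tau^{-1}p_0$ as in that reference, up to the sign convention for $V_\xi$. The orbit statement, which is all that is used in Proposition~\ref{p:local-tamed-GIT}, holds in every reading.
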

\begin{proof} Elementary, see \cite[Lemma 3.2]{GRS} for details.
\end{proof}

The remainder of the proof consists of an ODE analysis of the momentum flow lines.  A standard feature of the compatible case is that the momentum flow is the gradient flow for the square norm of the moment map, which we generalize below in the tamed case. Let   
\[ f(p) = \frac{1}{2}||\mu_{\omega}^K||^2 := \tfrac{1}{2} \langle \mu^K_\omega(p)^{\sharp}, \mu^K_\omega(p)^{\sharp} \rangle. \] 
Hence by the definition of momentum map, for any vector field $v$ on $\C^k$ and an orthonormal basis $\{e_1, \ldots, e_{\ell}\}$ of $(\mathfrak{k}, \langle \cdot, \cdot \rangle)$,
\[ 
\begin{split}
df(v)_p & = \sum_{i=1}^\ell \left(d(\langle \mu^K_{\omega}, e_i \rangle)(v) \langle \mu^K_{\omega}, e_i \rangle\right)(p)  \\
&=- \sum_{i=1}^{\ell} \left(\omega(V_{e_i}, v)\langle \mu^K_{\omega}, e_i \rangle\right)(p)  \\
&=- \omega_p(V_{\mu^K_{\omega}(p)^{\sharp}}, v).
\end{split}
\]
It thus follows that for each $p\in U$:
\[ \omega^{-1}(df)(p)= -V_{\mu(p)^{\sharp}}(p), \]
showing that the momentum flow equivalently satisfies
\begin{equation}\label{almost-gradient} \frac{d}{dt}p(t) = - J_0 \omega^{-1} (df)(p(t)).
\end{equation}
In the case when $\omega$ is $J_0$-compatible, $-J_0 \omega^{-1} (df)= - g^{-1}(df)$,  so by \eqref{almost-gradient},  the momentum flow  becomes up to a sign the \emph{gradient flow} for $f$ for the corresponding Riemannian metric $g= -\omega J_0$. In general, when $\omega$ only tames $J_0$, it still defines a Riemannian metric $g=-(\omega^{1,1}) J_0$ on $U$ and we observe a similar phenomenon:
\begin{lemma}\label{l:symplectic-gradient} Let $p(t)$ be the solution of the momentum flow. Then
\[ \frac{d}{dt} f(p(t)) = -\left\|\omega^{-1}(df)\right\|^2_{g} (p(t)). \]
\end{lemma}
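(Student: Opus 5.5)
The plan is to differentiate $f$ along the flow with the chain rule and then rewrite the result using the $(1,1)$-part of $\omega$. Throughout, write $w := \omega^{-1}(df)$ for the $\omega$-Hamiltonian-type vector field of $f$, so that $df(\cdot) = \omega(w,\cdot)$. We have already shown that $w = -V_{\mu(p)^\sharp}$, and that the momentum flow is \eqref{almost-gradient}, i.e.\ $\dot p = -J_0 w$.

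Along a flow line the chain rule gives
\[
\frac{d}{dt} f(p(t)) = df(\dot p) = \omega(w, -J_0 w) = -\omega(w, J_0 w).
\]
So everything reduces to identifying $\omega(w,J_0w)$ with $\|w\|_g^2$, where $g = -(\omega^{1,1})J_0$.

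To do this, decompose $\omega = \omega^{1,1} + \omega^{(2,0)+(0,2)}$ into its $J_0$-invariant and $J_0$-anti-invariant parts. Here $\omega^{1,1}(J_0\cdot,J_0\cdot) = \omega^{1,1}$ and $\omega^{(2,0)+(0,2)}(J_0\cdot,J_0\cdot) = -\omega^{(2,0)+(0,2)}$. For any real $2$-form $\beta$ and vector $w$, the quantity $\beta(w,J_0w)$ depends only on the $J_0$-invariant part of $\beta$. Indeed, if $\beta$ is anti-invariant, then
\[
\beta(w,J_0w) = -\beta(J_0w, J_0^2 w) = -\beta(J_0 w,-w) = -\beta(w,J_0w),
\]
so $\beta(w,J_0w)=0$. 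Hence $\omega(w,J_0w) = \omega^{1,1}(w,J_0w)$.

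Next, with the convention $g(X,Y) = -\omega^{1,1}(X, J_0 Y)$, which is what $g=-(\omega^{1,1})J_0$ means, we get $\omega^{1,1}(w,J_0w) = -g(w,w)$. Combining this with the chain-rule computation gives
\[
\frac{d}{dt}f(p(t)) = -\omega^{1,1}(w,J_0w)\cdot(\pm 1),
\]
and the sign must be fixed consistently with the paper's conventions. In the compatible case the paper states that $-J_0\omega^{-1}(df) = -g^{-1}(df)$, i.e.\ the flow is the negative gradient flow. That forces $\omega(w,J_0 w) = \|w\|^2_g$ for $w=\omega^{-1}df$, and the same sign convention then carries over verbatim to $\omega^{1,1}$. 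The result is
\[
\frac{d}{dt} f(p(t)) = -\|\omega^{-1}(df)\|_g^2(p(t)).
\]

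Two points need checking. First, $g$ is genuinely a positive definite symmetric tensor. Symmetry follows from the $J_0$-invariance of $\omega^{1,1}$. Positivity follows from the tameness assumption: $\omega(v,J_0v)$ has a definite sign for $v\neq 0$, and by the computation above this equals $\omega^{1,1}(v,J_0v)$. The bound $\|\omega^{1,1}\|_{g_0} > 1/\Lambda$ then gives uniform equivalence with $g_0$ on $U$.

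Second, the only real obstacle is bookkeeping of sign conventions, namely how $\omega^{-1}$ and $-(\omega^{1,1})J_0$ are defined. I would fix them by matching the compatible case $\omega=\omega^{1,1}$, where the identity is the classical gradient-flow statement. The tamed case then follows from the vanishing of the anti-invariant part on pairs $(w,J_0w)$.
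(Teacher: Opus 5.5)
Your proposal is correct and is the paper's proof: the chain rule along $\dot p=-J_0\omega^{-1}(df)$ gives $-\omega(w,J_0w)$, which is then identified with $\|w\|_g^2$, and your observation that the $J_0$-anti-invariant part of $\omega$ vanishes on pairs $(w,J_0w)$ usefully spells out the step the paper leaves implicit. The one slip is the sign convention. Since $\omega^{1,1}$ is $J_0$-invariant, $g=-(\omega^{1,1})J_0$ means $g(X,Y)=-\omega^{1,1}(J_0X,Y)=\omega^{1,1}(X,J_0Y)$, not $-\omega^{1,1}(X,J_0Y)$; your reading would make $g$ negative definite under tameness. Hence $\omega^{1,1}(w,J_0w)=\|w\|_g^2$ holds directly, and neither the $\pm$ hedge nor the appeal to the compatible case is needed.
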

\begin{proof} By \eqref{almost-gradient}, 
\[ \frac{d}{dt} f(p(t))= -\left(df(J_0\omega^{-1} (df))\right)(p(t))= -\omega(\omega^{-1}(df), J_0 \omega^{-1}(df))(p(t))=-||\omega^{-1}(df)||^2_{g}(p(t)).\]
\end{proof}

Next, we show that a \emph{Lojasiewicz gradient estimate}~\cite{Loja} holds for $f$ (cf. \cite[Thm.6.4 \& Prop.6.8]{BM} and \cite[Thm.1]{Feehan} for the statement below).

\begin{lemma} \label{l:Loja} After possibly shrinking $U$, there exist constants $\delta>0, \, C>0, \, \lambda \in (1/2, 1)$ such that if $a$ is a critical value of $f$  and $p \in U$ is any point, then 
\begin{equation}\label{e:gradient} 
|f(p) - a|<\delta \, \, \Longrightarrow \, \,  |f(p) - a|^{\lambda} < C\|(\omega^{-1} df)(p)\|_g. \end{equation} 

\begin{proof}
By the relative equivariant Moser lemma we obtain, after possibly shrinking $U$, that $(U, \omega, K, \mu_{\omega}^K)$  is $K$-equivariantly symplectomorphic to $(\widetilde U, \omega_0, K, \mu_0^K)$ where \[ \mu_0^K(z)= \tfrac{1}{2} {\rm pr}_{\mathfrak{k}^*}(z \otimes \bar z)\]
is the standard $K$ momentum map of $\omega_0$.
The relevant estimate holds on $\widetilde U$ for the real analytic function $f_0 := \frac{1}{2}\langle\mu_0^K, \mu_0^K \rangle$ (see e.g. \cite[Thm.1]{Feehan}), and since $f$ is identified with $f_0$ via a $C^{r-1}$ diffeomorphism, after possibly further shrinking $U$, the claim follows.
\end{proof}
\end{lemma}

\begin{lemma}\label{l:stability}There exists an open ball $U_0=B(0, \varepsilon_0)$  such that for any $p_0 \in U_0$ the solution $p_t$ of \eqref{moment-flow} with initial value $p_0$ is global and stays in a closed ball $\overline{B(0, \varepsilon_1)} \subset U$.
\end{lemma}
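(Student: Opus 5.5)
The plan is to combine the monotonicity of $f=\tfrac12\|\mu^K_\omega\|^2$ along the flow (Lemma~\ref{l:symplectic-gradient}) with the Lojasiewicz estimate (Lemma~\ref{l:Loja}). Together they bound the length of a flow line in terms of the drop of $f^{1-\lambda}$. Since $\mu^K_\omega(0)=0$ and $\mu^K_\omega$ is $C^{r-1}$, we have $f(p)\le C|p|^2$ near the origin. So if $p_0$ is small, the available drop in $f$ is small, and the path cannot travel far.

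\emph{Step 1 (velocity versus $\|\omega^{-1}df\|_g$).} By \eqref{almost-gradient}, $|\dot p|_{g_0}=|J_0\omega^{-1}df|_{g_0}$. Assumption (1), namely $1/\Lambda<\|\omega^{1,1}\|_{g_0}\le\|\omega\|_{g_0}<\Lambda$, makes $g=-\omega^{1,1}J_0$ uniformly equivalent to $g_0$ on $U$. Hence $|\dot p|_{g_0}\le C_1\|\omega^{-1}df\|_g$ as long as $p(t)\in U$.

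\emph{Step 2 (length estimate).} Take $\varepsilon_1$ with $\overline{B(0,2\varepsilon_1)}\subset U$, shrinking $U$ first as in Lemma~\ref{l:Loja}. Work on a maximal interval $[0,T)$ on which $p(t)\in B(0,2\varepsilon_1)$. Along the flow $f$ is nonincreasing and $f(p_t)\le f(p_0)\le C\varepsilon_0^2<\delta$.

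Apply \eqref{e:gradient} with the critical value $a=0$, which is legitimate since $0$ is a critical point of $f$. This gives
\[
-\frac{d}{dt}f^{1-\lambda}=(1-\lambda)f^{-\lambda}\|\omega^{-1}df\|_g^2\ge \frac{1-\lambda}{C}\|\omega^{-1}df\|_g
\]
wherever $f>0$. If $f$ vanishes at some time, $p$ is stationary from then on, since $df=0$ there.

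Integrating and using Step 1,
\[
\int_0^T|\dot p|_{g_0}\,dt\le C_2 f(p_0)^{1-\lambda}\le C_3\varepsilon_0^{2(1-\lambda)}.
\]
Consequently $|p(t)|\le\varepsilon_0+C_3\varepsilon_0^{2(1-\lambda)}$. For $\varepsilon_0$ small this is at most $\varepsilon_1<2\varepsilon_1$, which gives a strict a priori bound.

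\emph{Step 3 (global existence).} The vector field in \eqref{moment-flow} is $C^{r-1}$ with $r\ge2$, hence locally Lipschitz, so local solutions exist and are unique. The a priori bound of Step 2 shows by continuity that $T=\infty$ and that $p_t\in\overline{B(0,\varepsilon_1)}$ for all $t\ge0$. By standard ODE continuation, a solution confined to a compact subset of $U$ exists for all time.

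The main obstacle is the use of the Lojasiewicz estimate with $a=0$. Lemma~\ref{l:Loja} holds for any critical value $a$, but we need to avoid a circular dependence of the constants on the trajectory. Using the single critical value $0$ sidesteps this, since near the origin $f<\delta$ uniformly. The remaining point to check is that the constant $C$ in \eqref{e:gradient} is uniform on the shrunken $U$, which is what Lemma~\ref{l:Loja} provides.
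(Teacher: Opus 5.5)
Your proposal is correct and follows essentially the same route as the paper. Both arguments use the monotonicity of $f$ along the flow and the Lojasiewicz inequality with critical value $a=0$ to bound the length of the flow line by $\frac{C}{1-\lambda}f(p_0)^{1-\lambda}$, then confine the trajectory using the uniform equivalence of $g$ and $g_0$. Your version is slightly more careful than the paper's: you make the continuation argument on a maximal interval explicit, and you treat separately the case $f(p(t))=0$, where the strict inequality in \eqref{e:gradient} degenerates and the flow is simply stationary.
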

\begin{proof} 
We apply \eqref{e:gradient} with $a=0$, noting  that $f(0)=0$ is a global minimum of $f$.  As $f$ is continuous, we let $B(0, \epsilon') \subset U$ be an open ball centered at $0$,  such that $f(p)<\delta'<\delta$ on $B(0, \epsilon')$. Let $p(t)$  be a solution of the momentum flow starting at $p_0 \in B(0, \epsilon')$. As $f(p(t))$ decreases (see Lemma~\ref{l:symplectic-gradient}), we have $0\leq f(p(t))<\delta'$. Using \eqref{e:gradient} and Lemma~\ref{l:symplectic-gradient},  we evaluate for any $t>0$:
\begin{equation}\label{e:Lgradient} -\frac{d}{dt} \left( f(p(t))^{1-\lambda} \right)= (1-\lambda)f(p(t))^{-\lambda}||\omega^{-1}(df)||^2_g(p(t)) \geq \left(\frac{1-\lambda}{C}\right)||\dot p (t)||_g.\end{equation}
Integrating between $[0, T]$, we get
\[d^g(p_0, p(T)) \leq  L^g(p(t)) \leq \left(\frac{C}{1-\lambda}\right)\left(f(p_0)^{1-\lambda} - f(p(T))^{1-\lambda}\right) < (\delta')^{1-\lambda}\left(\frac{C}{1-\lambda}\right),\]
and hence 
\[d^g(0, p(T)) \leq \epsilon' + (\delta')^{1-\lambda}\left(\frac{C}{1-\lambda}\right).\]
The claim follows by taking $\delta'$ and $\epsilon'$ small enough,  and noting that we have $1/\Lambda g_0 \leq g \leq  \Lambda g_0$ on $U$. \end{proof}

The key property of the flow  we need to establish is the following convergence result:
\begin{lemma} \label{flow-estimates} If $p_0 \in U_0$, then the limit $p_{\infty} = \lim_{t\to \infty} p(t)$ exists in $U$ and satisfies  $V_{(\mu^K_{\omega}(p_{\infty}))^{\sharp}}(p_{\infty})=0$.  Furthermore, there are uniform positive constants $C, \epsilon, T$ such that
\[ d^g(p_{\infty}, p(t))  \leq \frac{c}{(t-T)^{\epsilon}},  \, \, \, \forall  t>T.\]
\end{lemma}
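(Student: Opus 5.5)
The plan is the standard Łojasiewicz argument, as in the proof of Lemma~\ref{l:stability}, now pushed to infinite time. Let $p_0\in U_0$. By Lemma~\ref{l:stability} the flow $p(t)$ exists for all $t\ge 0$ and stays in the compact ball $\overline{B(0,\varepsilon_1)}\subset U$, and on this ball $g$ is uniformly equivalent to $g_0$. By Lemma~\ref{l:symplectic-gradient}, $f(p(t))$ is nonincreasing and bounded below by $0$, so it converges to a limit $a\ge 0$.

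The first goal is to show that the trajectory has finite length. Integrating \eqref{e:Lgradient} from $0$ to $\infty$ bounds $\int_0^\infty\|\dot p(t)\|_g\,dt$ by a multiple of $f(p_0)^{1-\lambda}$. A more careful version replaces $f$ by $f-a$, using \eqref{e:gradient} relative to the critical value $a$. Finite length implies that $p(t)$ is Cauchy, so $p_\infty=\lim_{t\to\infty}p(t)$ exists in $\overline{B(0,\varepsilon_1)}\subset U$. Since $\int_0^\infty\|\omega^{-1}df\|_g^2\,dt<\infty$ and the integrand is uniformly continuous along the bounded trajectory, we get $\omega^{-1}df(p_\infty)=0$. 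By the identity $\omega^{-1}(df)=-V_{\mu^\sharp}$, this gives $V_{(\mu^K_\omega(p_\infty))^\sharp}(p_\infty)=0$.

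A subtle point is that \eqref{e:gradient} requires $a$ to be a critical value of $f$. This follows from continuity, since $f(p_\infty)=a$ and $p_\infty$ is a critical point. To apply \eqref{e:Lgradient} to $f-a$, we also need $|f(p(t))-a|<\delta$ for all $t$, which holds because $0\le f(p(t))-a\le f(p_0)<\delta$ on $U_0$. Alternatively, it holds for all $t\ge T$ once $f(p(t))$ is within $\delta$ of $a$.

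For the rate, set $\phi(t)=f(p(t))-a\ge 0$. Combining Lemma~\ref{l:symplectic-gradient} with \eqref{e:gradient} gives $\phi'\le -C^{-2}\phi^{2\lambda}$. Since $2\lambda>1$, integrating yields $\phi(t)\le c\,(t-T)^{-1/(2\lambda-1)}$ for $t>T$; if $\phi$ vanishes at some time it stays zero and the flow is stationary. The tail-length estimate from \eqref{e:Lgradient} applied to $f-a$ then gives
\[
d^g(p_\infty,p(t))\le \int_t^\infty\|\dot p\|_g\le \tfrac{C}{1-\lambda}\,\phi(t)^{1-\lambda}\le c\,(t-T)^{-\epsilon},\qquad \epsilon=\tfrac{1-\lambda}{2\lambda-1}>0.
\]
The main point needing care is the uniformity of $C,\lambda,\delta$ across the possible critical values $a$, which is provided by Lemma~\ref{l:Loja}.
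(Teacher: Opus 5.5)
Your proposal is correct and follows essentially the same Lojasiewicz argument as the paper. The tail length is bounded by $(f-a)^{1-\lambda}$, and integrating $(f-a)^{1-2\lambda}$ gives the rate, with the same exponent $\epsilon=\tfrac{1-\lambda}{2\lambda-1}$. Your ordering is slightly cleaner than the paper's, which asserts that $a$ is critical before it has shown $p_\infty$ exists: you first obtain $p_\infty$ from the inequality at the minimum value $0$, as in Lemma~\ref{l:stability}, and only then deduce that $a=f(p_\infty)$ is a critical value.
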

\begin{proof} We follow the proof \cite[Theorem~3.3]{GRS}.  Let $p(t)$ be a solution of the momentum flow and denote
\[ a:= \lim_{t\to \infty} f(p(t)).\]
This limit exists as $f(p(t))$ decreases (see Lemma~\ref{l:symplectic-gradient}) and is non-negative. Notice that $a$ is a critical value of $f$: this follows from \eqref{almost-gradient}. Thus, $a=f(p_{\infty})$ is a critical value of $f$.  Let $T>0$ be such that 
\[ a< f(p(t)) < a + \delta \qquad  \forall t\geq T. \]
It follows  from \eqref{e:Lgradient} that for any $t\geq T$
\begin{equation}\label{key-estimate}
 \int_{t}^{\infty}\|\dot p(t)\| dt  \leq \left(\frac{C}{1-\lambda}\right)\left(f(p(t))-a\right)^{1-\lambda}.\end{equation}
This shows that the whole flow line $p(t)$ converges to a point $p_{\infty}$.

Letting further
\[ \rho(t):= \left(f(p(t)) -a\right)^{-2\lambda + 1}\]
we estimate similarly
\[ \dot \rho(t)=(2\lambda-1)\left(f(p(t)) -a\right)^{-2\lambda}\| \omega^{-1}(df)\|^2_g \geq\left(\frac{2\lambda-1}{C^2} \right) \qquad \forall t\geq T.  \]
This implies
\[\rho(t) \geq \left(\frac{2\lambda-1}{C^2}\right)(t-T) \qquad \forall t\geq T  \] 
and thus
\[ (f(p(t)) -a)^{1-\lambda} = \rho(t)^{-\frac{1-\lambda}{2\lambda-1}} \leq \left(\left(\frac{2\lambda-1}{C^2}\right)(t-T)\right)^{-\frac{1-\lambda}{2\lambda-1}} . \] 
Combined with \eqref{key-estimate}, this concludes the proof.
\end{proof}

\begin{proof}[Proof of Proposition~\ref{p:local-tamed-GIT}] By Lemmas~\ref{l:stability} and \ref{flow-estimates},  the momentum flow $p(t)$ starting at $p_0 \in U_0$ is global and converges to $p_{\infty} \in U$; by Lemma~\ref{l:orbit}, there exists a corresponding solution $\tau(t)\in K^{\C}$  such that $p(t)= \tau(t)\cdot p_0$. As $p_0$ is polystable by assumption, $K^\C \cdot p_0$ is closed and thus $p_{\infty} \in K^{\C} \cdot p_0 \cap U$. If $p_0$ is,  furthermore,  $K^{\C}$-stable, $p_{\infty} \in K^{\C} \cdot p_0$ has a trivial stabilizer, and thus $\mu(p_{\infty})^{\sharp}=0$.
\end{proof}
\begin{rem} Thinking of $\C^k$ as an affine chart of $\PP^{k}_{\C}$, an easy  modification of the above arguments shows that for any compact subgroup $\hat K < PU(k+1)$ and any $\hat K$-invariant symplectic  $2$-form $\omega$ on $\PP^k_{\C}$ with momentum map $\mu^{\hat K}_{\omega}$,  such that $\omega$   tames the standard complex structure of $\PP^k_{\C}$, one direction of the Kempf--Ness correspondence holds.  Namely,  if $p\in \PP^k_{\C}$ is a $\hat K^{\C}$-polystable (resp. stable)  point then $||(\mu_{\omega}^{\hat K})^{\sharp}||^2$ has a  critical point (resp. zero) on the orbit $\hat K^{\C} \cdot p$. It is a natural question to ask if the $\hat K^{\C}$-polystability (resp. stability) of $p$ is also a necessary condition  for the vanishing of $d|| (\mu^{\hat K}_{\omega})^{\sharp}||^2$ (resp. of $\mu^{\hat K}_{\omega}$) on $\hat K^{\C} \cdot p$, i.e. whether the full Kempf--Ness correspondence can be extended from the compatible to the tamed setup.
\end{rem}

\section{cscGK structures from Poisson deformations}

In this section we establish Theorem \ref{thm:main}.  The main point is to exploit an idea from \cite{sz}, with supplements in \cite{CS} and \cite{ortu}, where it was established that the existence of cscK metrics is stable under \emph{polystable} small complex deformations of a given cscK polarized manifold.  In our case instead of varying the complex structure we shall vary the Poisson tensor.  We refer to the introduction for an explanation of the broad strategy of the proof.  To begin we recall various definitions and set notation used throughout this section:\\

\begin{enumerate}
\item $X= (M, J_0)$ is a smooth compact complex manifold which admits a cscK metric $(\omega_0,g_0)$.  We assume $H^{2,0}(X, \C)=\{0\}$ through this section.  Of course, this is automatic in the Fano case, but most of our arguments apply in the general cscK case.\\

\item $K$ is the group of K\"ahler isometries of $(X, J_0, \omega_0)$ inside the reduced group of complex automorphisms $\Aut_{\rm red}(X)$ of $(X, J_0)$.  In the Fano case $\Aut_{\rm red}(X)=\Aut_{\circ}(X)$ is just the connected component of the automorphism group of $X$. By the Lichnerowicz--Matsushima theorem, $\G:=K_{\C}=\Aut_{\rm red}(X)$ is reductive as $X$ admits a cscK metric.\\

\item  $\Pi \subset H^0(X, \wedge^2 T^{1,0}_X)$ is the algebraic subset of holomorphic Poisson tensors  inside the vector space  $H^0(X, \wedge^2 T^{1,0}_X)$ of holomorphic bi-vectors; we assume $\Pi \neq \{0\}$. \\ 
\item  $W := {\rm span}_{\C} \Pi < H^0(X, \wedge^2 T^{1,0}_X)$ denotes the linear subspace spanned by Poisson bi-vectors, also called  the \emph{Zariski tangent space} of $\Pi$.  Observe that the group $\G$ acts linearly on $H^0(X, \wedge^2 T^{1,0}_X)$ by pushforward, preserving $\Pi$ and hence $W$. It thus follows that the stability properties  of $\poiss \in \Pi$  with respect to linear actions of $\G$ on the vector spaces $H^0(X, \wedge^2 T^{1,0}_X)$  and on $W$ agree, so we shall work without loss of generality on $W$.\\

\item Let $\AGK_{\omega_0}$ denote the infinite dimensional formal Fr\'echet manifold of almost-complex structures $J$ on $X$ tamed by the symplectic form $\omega_0$, endowed with the Fr\'echet topology of $C^{\infty}(X)$ convergence of smooth tensors on $X$,  and  with the formal K\"ahler structure $(\boldsymbol{\Omega}, {\bf J})$ coming from the space of $\omega_0$-compatible almost GK structures.  The group of Hamiltonian transformations $\Ham(X, \omega_0)$ acts on $(\AGK_{\omega_0},\boldsymbol{\Omega}, {\bf J})$ in a Hamiltonian way, with momentum map identified through the global $L^2$ product on $(X, \omega_0)$ with the normalized version of Goto's scalar curvature function~\cite[Def.5.3]{Goto-moment} 
\begin{align*}
\mathring{\Goto}(\omega_0, J):=\left(\Goto(\omega_0,J)- \overline{\mu}\right)
\end{align*}
where $\overline{\mu}$ is a topological constant  making $\mathring{\Goto}(\omega_0,J)$ of zero mean with respect to $\omega_0$.  
Observe here that as $K\subset \Ham(M, \omega_0)$, $K$ also acts on $(\AGK_{\omega_0},\boldsymbol{\Omega}, {\bf J})$ in a Hamiltonian way, with momentum map
\[ \langle \boldsymbol{\mu}(J), \xi \rangle = -\int_X \mathring{\Goto}(\omega_0,J) h^{\xi}_{\omega_0} \omega_0^{[n]},  \] 
where $h^{\xi}_{\omega_0}$ is a  Hamiltonian  potential  of zero mean with respect to $\omega_0$ of an element $\xi \in \mathfrak{k}:={\rm Lie}(K)$. If, furthermore, $J \in \AGK_{\omega_0}$ corresponds to a symplectic GK structure, $\Goto(\omega_0, J)= \Gscal(\omega_0, J)$ where $\Gscal(\omega_0, J)$ is given by \eqref{eq:Gscal}.\\

\item We endow the space $\GK_{\poiss, \alpha}$ introduced in Definition~\ref{d:GK-class}  with the $C^{\infty}(X)$ Fr\'echet topology, or the H\"older $C^{k+\lambda}(X)$ topology for $k$ sufficiently large and $\lambda \in (0,1)$. In the latter  case,  we shall denote that space by $\GK^{k+\lambda}_{\poiss, \alpha}$ and by $\K_{\alpha}^{k+\lambda}$ its subspace of K\"ahler forms in $\alpha$ of regularity $C^{k+\lambda}(X)$. Similarly, $\AGK_{\omega_0}^{k+\lambda}$ will denote the space of almost complex structures $J$ of regularity $C^{k+\lambda}(X)$ tamed by the symplectic $2$-form $\omega_0$.
\end{enumerate}

\subsection{An extended Gualtieri map}\label{sec:gualtieri-map}

In this first step,  for a $\poiss \in W := {\rm span}_{\C}\Pi \subset H^0(X, \Wedge^2 T^{1,0}_X)$ of suitable (small) size,  and an initial K\"ahler metric $\omega_{\varphi} = \omega_0 + dd^c \varphi$ in the space $\K_{\alpha}$ of $J_0$-compatible K\"ahler metrics in the deRham class $\alpha$, we shall assign a $2$-form $F_{\sigma, \varphi}$ which tames the complex structure $J_0$.  Furthermore, in the case when $\sigma \in \Pi$ is Poisson, the form $F_{\sigma, \varphi}$ will be closed and belong to $\GK_{\sigma, \alpha}$.
This is essentially the construction in \cite{Gualtieri-Hamiltonian}  but extended to non-Poisson bi-vectors in $W$.  As we shall not vary the complex structure $J_0$ at this step, through this subsection we shall denote $J=J_0$ the underlying complex structure of $X$.

For any $\omega \in \K_{\alpha}$, we  write $\omega=\omega_{\varphi}:= \omega_0 + dd^c \varphi$ where $\varphi \in \mathcal{H}_{\omega_0} \subset C^{\infty}(X)/\R$ is in the Fr\'echet space  $\mathcal{H}_{\omega_0}$ of  $\omega_0$-relative K\"ahler potentials.  Note that we will work with open balls in the vector spaces $W$ and $C^{k + 2 +\lambda}(X, \mathbb R)$.  As the norms on these spaces are defined using $\omega_0$, which is $K$-invariant, the balls themselves are automatically $K$-invariant.  

\begin{lemma}\label{l:gualtieri} Given the setup above and $k \in \mathbb N$, there exist $\delta_1, \delta_2 > 0$ defining balls $U_1  = B(0,\delta_1) \subset W$ and $U_2 = B(0,\delta_2) \subset C^{k + 2 +\lambda}(X)$ and a $K$-equivariant map
\begin{align*}
\boldsymbol{\Phi} : U_1 \times U_2 \to C^{k +\lambda}(X, \Wedge^2 T^*X)
\end{align*}
satisfying 
\begin{enumerate}
\item  $\boldsymbol{\Phi}(0, \varphi)=\omega_{\varphi}$ is a K\"ahler metric in $\K^{k+\lambda}_\alpha$ for any $\varphi \in U_2$;
\item $\boldsymbol{\Phi}(\sigma, {\varphi})$  is a generalized K\"ahler metric in $\GK^{k+\lambda}_{\sigma, \alpha}$ for any $\sigma \in U_1 \cap \Pi$ and any ${\varphi} \in U_2$;
\item $\boldsymbol{\Phi}(\sigma, \varphi)$ tames $J$ for any $(\sigma, \varphi) \in U_1 \times U_2$.
\end{enumerate}
\end{lemma}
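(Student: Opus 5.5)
Our plan is to reproduce Gualtieri's inductive construction from \cite{Gualtieri-Hamiltonian} (the one behind \autoref{thm:gg-def}), but to set it up as a fixed-point problem that makes sense for an arbitrary bivector $\sigma\in W$. This lets us use the implicit function theorem in H\"older spaces in place of a formal power series.

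Write $Q_\sigma := \mathrm{Re}(\sigma)$ for $\sigma \in W$. We look for $F = \omega_\varphi + \beta$, with $\beta$ a real $2$-form of type $(2,0)+(0,2)$, subject to the algebraic condition that characterizes a symplectic GK structure with Poisson tensor $\sigma$. Concretely, this condition says that $F^{-1}$ has $(2,0)$-part determined by $\sigma$; a convenient way to write it is
\[
(F^{-1})^{2,0+0,2} = -\tfrac{1}{4}\,\mathrm{Re}(\sigma)\text{-part}, \qquad\text{equivalently}\qquad \beta = \tfrac14 F Q_\sigma F \ \ (\text{up to the paper's normalization}),
\]
compare the identity ${\rm Im}(\beta)=\frac14 F_0\,{\rm Re}(\poiss)\,F_0$ in Remark~\ref{rmk:caveats}.

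For fixed $(\sigma,\varphi)$, the map $\beta \mapsto \frac14(\omega_\varphi+\beta)Q_\sigma(\omega_\varphi+\beta)$ is a contraction on a small ball of $C^{k+\lambda}$ forms when $|\sigma|$ is small. So the purely algebraic equation has a unique small solution $\beta(\sigma,\varphi)$, which is $C^{k+\lambda}$ in space and analytic in $(\sigma,\varphi)$. This defines a $K$-equivariant tensor $\tilde F(\sigma,\varphi)$. The trouble is that it need not be closed. To correct this we add an exact term: set $F = \omega_\varphi + \beta + d(\ldots)$. Following Gualtieri, the closedness defect $d\beta$ is of type $(3,0)+(2,1)+\ldots$. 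Because $H^{2,0}(X,\C)=0$ (hence $H^{0,2}=0$), we can use Hodge theory for the fixed K\"ahler metric $\omega_0$ to solve $\bar\partial$-equations with Green's operator estimates, and so adjust the $(1,1)$ and $(2,0)$ components by forms $d\gamma$ with $\gamma$ depending smoothly on $(\sigma,\varphi)$. We then set up the equation $\mathcal{E}(\sigma,\varphi,\gamma)=0$, which combines the algebraic condition with $F\in\alpha$ closed, i.e.\ $F=\omega_\varphi + d\gamma$ with $\gamma \perp$ harmonic/gauge. At $\sigma=0$ its linearization in $\gamma$ is a Laplace-type isomorphism on $d^*$-coexact forms. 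The implicit function theorem then gives $\gamma(\sigma,\varphi)$ on balls $U_1\times U_2$. It loses two derivatives relative to $\varphi\in C^{k+2+\lambda}$, which matches $\omega_\varphi \in C^{k+\lambda}$. Uniqueness of the IFT solution together with $K$-invariance of $\omega_0$, of the norms and of the gauge condition gives $K$-equivariance.

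For non-Poisson $\sigma$, we do not insist that both conditions hold. Instead we define $\boldsymbol{\Phi}$ by the projected equation, i.e.\ the closedness equation projected onto the range of the linearized operator, which is solvable for every $\sigma$. When $\sigma\in\Pi$, Gualtieri's argument shows that the obstruction lies in a component that vanishes automatically: the integrability $[\sigma,\sigma]=0$ makes $d\beta$ satisfy the compatibility needed. Hence $\boldsymbol{\Phi}(\sigma,\varphi)$ is closed and GK, which proves (2). Item (1) is immediate because $\beta=0$ and $\gamma=0$ when $\sigma=0$. Item (3) follows from openness of tameness: $\boldsymbol{\Phi}$ is $C^0$-close to $\omega_\varphi$, which is uniformly K\"ahler for $\varphi\in U_2$, so we shrink $\delta_1,\delta_2$.

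The main obstacle will be step (2). We must arrange the projected equation so that it is well posed for all $\sigma\in W$ and also reduces exactly to Gualtieri's integrable system on $\Pi$. This requires identifying precisely which component of the defect is killed by $[\sigma,\sigma]=0$, and using $H^{0,2}=0$ to show no cohomological obstruction remains. Our first attempt will be to transcribe Gualtieri's recursion into the IFT framework term by term.
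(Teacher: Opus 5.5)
\textbf{There is a genuine gap, and it sits exactly at item (2), which is the substance of the lemma.} The implicit function theorem step fails as you set it up. Take $F=\omega_\varphi+d\gamma$ (closed, in $\alpha$) and impose the algebraic condition $FQF=FJ-J^\sharp F$. At $\sigma=0$ the linearization in $\gamma$ is $\gamma\mapsto(d\gamma)^{2,0+0,2}$, whose $(0,2)$-part is $\bar\del\gamma^{0,1}$. This is a first-order operator, not a Laplace-type isomorphism:
its kernel contains the K\"ahler-potential directions $d^cu$, which are coexact, so your gauge does not remove them;
and its image consists only of $\bar\del$-exact $(0,2)$-forms, missing (for $n\ge 3$) the infinite-dimensional summand $\bar\del^*\Omega^{0,3}$.
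In other words, ``closed $+$ GK'' is overdetermined. Its compatibility condition is the Poisson condition, and for $\sigma\in W\setminus\Pi$ it has no solution at all.

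Your fallback --- a ``projected equation'' that is solvable for all $\sigma$ and reduces to the GK system on $\Pi$ --- is never written down. The argument that the discarded component vanishes for $\sigma\in\Pi$ is explicitly deferred as the ``main obstacle'', and that is precisely the content of (2). Your first step is also vestigial: solving pointwise for $\beta$ with $F^{1,1}=\omega_\varphi$ frozen is undone as soon as you add $d(\dots)$, which changes $F^{1,1}$.

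The missing idea, as in the paper, is to give up closedness rather than the GK identity. You slave $F^{1,1}$ to $F^{0,2}$ by the formula closedness would force,
\[
F=\omega_\varphi+F^{2,0+0,2}+2\,\mathrm{Re}\big(\del\bar\del^*_{\omega_0}\overline{\mathbb G}_{\omega_0}F^{0,2}\big),
\]
while imposing the full equation $FJ-J^\sharp F=FQF$ for every $\sigma\in W$. Its $(1,1)$-part is automatic by a purely algebraic induction (claim (1) in the paper).

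In the unknown $\Theta=\overline{\mathbb G}_{\omega_0}F^{0,2}$ this is a square system with linearization $2\sqrt{-1}\,\bar\square_{\omega_0}$. That operator is invertible because $H^{0,2}=0$, so the system is solvable for all small $\sigma\in W$. The paper does this with Gualtieri's recursion and a Kodaira--Morrow majorant estimate, and recasts it as an implicit function problem in Lemma~\ref{l:Frechet}.

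For $\sigma\in\Pi$, closedness is then equivalent to $\bar\del F^{0,2}=0$, which holds order by order by Gualtieri's Maurer--Cartan argument. Hence $F^{0,2}=\bar\del\beta$ with $\beta=\bar\del^*_{\omega_0}\overline{\mathbb G}_{\omega_0}F^{0,2}$, and $F=\omega_\varphi+d(\beta+\bar\beta)\in\alpha$.

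A Lyapunov--Schmidt version of your idea is well posed: gauge $\gamma=\beta+\bar\beta$ with $\beta\in\mathrm{im}\,\bar\del^*$, and project the $(0,2)$-defect onto $\bar\del$-exact forms. But to see that the complementary component vanishes on $\Pi$ you would still need Gualtieri's closed solution, with estimates uniform in $(\sigma,\varphi)$, together with uniqueness in the implicit function theorem --- that is, essentially the paper's argument.
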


\begin{proof} We recall the construction in the case when $\poiss\in \Pi$ is a Poisson bi-vector following \cite[Corollary 7.3]{Gualtieri-Hamiltonian}.  We let $Q:={\rm Re}(\poiss)$ be the real $2$-vector tensor corresponding to $\poiss$. By \cite{Gualtieri-Poisson}, $F\in \GK_{\poiss, \alpha}$ if and only if $F$ is closed, tames $J$ and
\begin{equation}\label{main} F Q F = FJ -J^\sharp F. \end{equation}
In the above equality and in what follows we use $J^{\sharp}$ to denote  $-J^*$.  The reason for this is that if $g$ is a $J$-compatible Riemannian metric, then $J^{\sharp} = g J g^{-1}$ is the Riemannian dual of $J$.

We seek $F(t) \in \GK^{k+\lambda}_{t\poiss, \alpha}, \, t\in [-1,1]$, analytic in $t$, constructed as a power series
$$F(t) =F_0 + tF_1 + \ldots,$$
where $F_0:=\omega_{\varphi}$ and each $F_n$ is a $d$-exact real 2-form.  Observe that the equation
\begin{equation}\label{B condition}
F(t) J - J^\sharp F(t)- tF(t) Q F(t)=0
\end{equation}
is equivalently expressed as
\begin{equation}\label{e1}
2J^\sharp   (F(t)^{2,0 + 0,2} ) + tF(t)  Q  F(t) = 0,
\end{equation}
where the complex types are with respect to $J$.
If we decompose \eqref{e1} term-by-term with respect to powers of $t$, we have (factoring out $t^n$)
\begin{equation}\label{term condition}
J^\sharp F_0^{2,0+0,2} = 0, \qquad J^\sharp   F_n^{2,0+0,2} =- \frac{1}{2} \sum_{i+j = n-1} F_i  Q  F_j.
\end{equation}
Since $F_0 = \omega_{\varphi}$ is $(1,1)$, this is satisfied for $n=0$.  Given $F_i$ for all $i < n$, \eqref{term condition} fixes $F_n^{2,0+0,2}$.  We will build $F_n^{1,1}$ (and hence $F_n$)  by induction. 

First observe that for $n=1$,   \eqref{term condition} is equivalent to 
\begin{equation}\label{F1-(0,2)} -4\sqrt{-1}F_1^{0,2} =  F_0 \poiss F_0= \omega \poiss \omega, \end{equation}
which yields $\bar\partial F_1^{0,2}=0$ by using  $d\omega=0$ and $\bar \partial \poiss =0$.  Since we assume that $H^{2,0}(X)=0$, there exists a $(0,1)$-form $\beta_1$ such that 
  \[ F_1^{0,2}= \bar \del \beta_1.\]
 Given such a $1$-form $\beta_1$, we  let  
 \[ F_1^{1,1} = \partial \beta_1  + \bar{\partial} \bar{\beta}_1 \quad \longleftrightarrow \quad F_1 := d(\beta_1 + \bar\beta_1).  \]
Now assume there exist $F_0, F_1, \ldots, F_{n-1}$ satisfying \eqref{term condition} and moreover $F_k= d(\beta_k + \bar \beta_k), 1 \leq k \leq n-1$. We claim
\begin{enumerate}
    \item $\sum_{i+j = n-1} \left(F_i  Q  F_j\right)^{1,1} = 0$,\\
    \item $\bar\del\left(\sum_{i+j = n-1} \left(F_i  Q  F_j\right)^{0,2}\right) = 0$.
\end{enumerate}
Given this claim, we may define $F_n^{2,0+0,2}$ via \eqref{term condition}, solve $F_n^{0,2} = \bar \del \beta_n$ and let $F_n = d(\beta_n + \bar \beta_n)$, finishing the induction.  To show the first claim observe that it is equivalent to
$$\sum_{i+j = n-1} \left(J^\sharp F_i  Q  F_j + F_i  Q  F_j  J\right)=0.$$
Since $Q$ anti-commutes with $J$, i.e., $J  Q = -Q  J^\sharp$, the LHS can be rewritten as
\begin{align*}
 \sum_{i+j = n-1} &\left(J^\sharp F_i  Q  F_j -  F_i  J  Q   F_j - F_i  Q   J^\sharp  F_j + F_i  Q   F_j  J \right) \\
=& \sum_{i+j = n-1} (J^\sharp  F_i - F_i  J) Q F_j - \sum_{i+j = n-1} F_i Q (J^\sharp F_j - F_j  J).
\end{align*}
As \eqref{term condition} holds for all $n' < n$ by induction, we make the substitution
$$J^\sharp F_i - F_i  J = \sum_{k+l = i-1} F_k  Q  F_l$$
(and likewise for $F_j$), to obtain 
\begin{align*}
\sum_{i+j = n-1} \left(J^\sharp  F_i   Q  F_j + F_i  Q   F_j   J\right)
&= \sum_{j+k+l = n-1} F_k  Q  F_l  Q  F_j \;-\; \sum_{i+k+l = n-1} F_i  Q  F_k  Q   F_l =0.
\end{align*}
We note that this inductive proof of claim (1) holds independently of claim (2).  The proof of claim (2) is the subtle part of the argument in \cite{Gualtieri-Hamiltonian},  where the fact that $\poiss \in \Pi$ will be used.  To this end, Theorem 5.3, Proposition 3.5 and Remark 3.6-(1) in \cite{Gualtieri-Hamiltonian} show that the equation \eqref{B condition} we are solving can be equivalently rewritten in the form
\[ \psi(F)^{0,2} =0, \qquad   \psi(F):= (1+ tF\poiss_0)^{-1}F, \quad \poiss_0 := \frac{\sqrt{-1}}{4} \poiss,\]
and we have assumed that $t$ is small enough so that  the inverse is defined. From this equivalent point of view, and using  that $\psi(F)$ satisfies a Maurer--Cartan equation  with respect to the derived Koszul bracket of the holomorphic Poisson tensor $\poiss$, it becomes clear (see (66)-(67) in \cite{Gualtieri-Hamiltonian}) that claim (2) holds.

Hence we have built a formal power series for real $d$-closed forms $F(t)\in [\omega]=\alpha$, which satisfy \eqref{B condition}.  The next step is to define the map $\Phi$ for general sections $\sigma \notin \Pi$.  To achieve this we first make an important reformulation of the inductive construction using Hodge theory, which will also be key in proving the convergence.
Let
  \[ \bar \square_{\omega_0} = \bar\del \bar\del^*_{\omega_0} + \bar\del^*_{\omega_0}\bar\del = \tfrac{1}{2}\Delta_{\omega_0} = \tfrac{1}{2}\left (d\delta_{\omega_0} + \delta_{\omega_0} d \right), \]
the $\bar \partial$-Laplacian acting on smooth $(0,2)$-forms.  As $H^{0,2}(X, \C)=0$, this operator is invertible, with inverse denoted by $\overline{{\mathbb G}}_{\omega_0}$ (thus $2\overline{\mathbb{G}}_{\omega_0} = \mathbb{G}_{\omega_0}$  where $\mathbb{G}_{\omega_0}$ is  the inverse of $\Delta_{\omega_0}$,  restricted to $(0,2)$-forms.)  We observe that by construction we have
$$\beta_n := \bar \del^*_{\omega_0} \overline{\mathbb G}_{\omega_0}(F_n^{0,2}),$$
hence
\begin{equation}\label{Fn}
F_n^{1,1} = {\rm Re} \left( \del\bar \del^*_{\omega_0} \overline{\mathbb G}_{\omega_0}(F_n^{0,2})\right) \quad  
 \longleftrightarrow \quad F_n = F_n^{0,2} + {\rm Re} \left( \del\bar \del^*_{\omega_0}  \overline{\mathbb G}_{\omega_0} (F_n^{0,2})\right).\end{equation}
Crucially, if we choose now $\sigma \notin \Pi$ a bi-vector in $W$, we can use the relation \eqref{term condition} (equivalently the first claim of the inductive construction above) to inductively determine $F_n^{0,2}(t)$, and hence $F_n(t)$ by \eqref{Fn}.  Of course, the second claim of the inductive construction above will no longer hold and so we do not expect such $F$ to be closed.

It remains to be shown that this series has a positive radius of convergence, following standard arguments using Hodge theory as in \cite{KM}.  Schauder estimates for the Laplacian and the inductive definition imply that in $C^{k+\lambda}(X, \Wedge^2 T^*X)$ (for given $k\ge 2, 0<\lambda<1$)
\begin{equation*}
\|F_n\|_{k+\lambda} \leq C_{k+\lambda} \|\poiss\|_{k+ \lambda}\left(\sum_{i+j=n-1} \|F_i\|_{k+\lambda} \|F_j\|_{k+\lambda}\right),
\end{equation*}
for some positive constant $C_{k+\lambda}$. We can conclude as in \cite[Chapter 4, Thm.~2.1]{KM} that the power series $F(t) = \sum_{n=0}^{\infty} F_n t^n$ converges  in $C^{k+\lambda}(X, \Wedge^2T^*X)$ for $t \in [0, 1/b_{k+\lambda})$ where $b_{k+\lambda}=16C_{k+\lambda}||\omega_{\varphi}||^2_{k+\lambda}||\poiss||_{k+\lambda}$.  Restricting to a suitable open neighbourhood $(0, 0) \in  U_1 \times U_2   \subset W \times C^{k+2+\lambda}(X)$, we can assume $b_{k+\lambda}<1$.  We thus  define
\[ \boldsymbol{\Phi}(\poiss, \varphi) := F(1).\]
The claimed properties are built into the construction, and moreover the $K$-equivariance  of $\boldsymbol{\Phi}$ follows by the $K$-invariance of $\omega_0$ and the naturality of the construction.
\end{proof}

Observe that Lemma~\ref{l:gualtieri} produces $2$-forms of regularity  $C^{k+\lambda}$, and the radius of convergence of the formal power series depends on $k$.  In the next lemma we employ further elliptic regularity arguments to improve this to smooth forms, again following ideas from \cite{ABD,KM}.

\begin{lemma}[Smooth regularity of ${\bf \Phi}$]\label{l:smooth} In the setup of Lemma~\ref{l:gualtieri}, and possibly shrinking $\delta_1$ and $\delta_2$, if $(\sigma, \varphi) \in U_1 \times U_2$ and $\varphi$ is smooth, then ${\bf \Phi}(t\sigma, \varphi)$ is smooth.
\end{lemma}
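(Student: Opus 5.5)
The idea is to show that the convergent series $F(t)=\sum_n F_n t^n$ from Lemma~\ref{l:gualtieri} actually converges in $C^{m+\lambda}$ for every $m\ge k$, with a radius of convergence that does not shrink as $m$ grows. Following \cite[Chapter 4]{KM}, and the regularity argument in \cite{ABD}, I would replace the plain recursive bound by a bootstrap in which higher norms are controlled linearly by lower ones. With $\varphi$ smooth, $\omega_\varphi$ and $\poiss$ are smooth, and each $F_n$ is smooth: it comes from $\omega_\varphi$ and $\poiss$ by finitely many algebraic operations and applications of $\overline{\mathbb G}_{\omega_0}$ and $\del\bar\del^*_{\omega_0}$, all of which preserve $C^\infty$. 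So only the uniform convergence of the series in each $C^{m+\lambda}$ is at stake.

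First, I would use the recursion \eqref{term condition} together with \eqref{Fn}. The operator $\del\bar\del^*_{\omega_0}\overline{\mathbb G}_{\omega_0}$ has order zero and is bounded $C^{m+\lambda}\to C^{m+\lambda}$. By the Leibniz rule, the $C^{m+\lambda}$-norm of the quadratic expression $\sum_{i+j=n-1}F_iQF_j$ is bounded by
\[
C_m\sum_{i+j=n-1}\bigl(\|F_i\|_{m+\lambda}\|F_j\|_{k+\lambda}+\|F_i\|_{k+\lambda}\|F_j\|_{m+\lambda}\bigr)\|\poiss\|_{m+\lambda}.
\]
This is a tame estimate: $\|F_n\|_{m+\lambda}$ is controlled by the top norm of the earlier terms times only the $C^{k+\lambda}$-norms, which are already known to decay like $b_{k+\lambda}^{n}$. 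I would then run a majorant-series argument in the style of \cite[Chapter 4, Thm.~2.1]{KM}. The bounds $A(t)=\sum\|F_n\|_{k+\lambda}t^n$ and $B_m(t)=\sum\|F_n\|_{m+\lambda}t^n$ satisfy a linear differential-type inequality $B_m \ll \|\omega_\varphi\|_{m+\lambda}+C_m t\,A(t)B_m(t)$. This yields $B_m\ll \|\omega_\varphi\|_{m+\lambda}/(1-C_m tA(t))$, which converges wherever $C_m tA(t)<1$.

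The main obstacle is exactly this last condition: $C_m$ grows with $m$, so the region $C_m tA(t)<1$ naively shrinks as $m$ grows. To get around this, I would exploit the fact that $A(t)$ is small for $|t|\le 1$ once $\delta_1$ is shrunk. The cleaner route, however, is elliptic regularity for the limit itself. The form $F=\boldsymbol{\Phi}(\poiss,\varphi)$ satisfies the fixed-point equation
\[
F=\omega_\varphi+\mathcal{P}\bigl(F\poiss F\bigr),
\]
where $\mathcal{P}$ is the zero-order operator $\eta\mapsto c\,\eta^{0,2}+{\rm Re}(\del\bar\del^*\overline{\mathbb G}\,c\,\eta^{0,2})$ obtained by summing \eqref{term condition} and \eqref{Fn}. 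To gain regularity, I would differentiate this equation along a smooth vector field $v$ and view it as the linear equation
\[
(1-L_F)\nabla_vF=\text{smooth}+\text{lower order},
\qquad L_F(\eta)=\mathcal{P}(\eta\poiss F+F\poiss\eta).
\]
The operator $L_F$ has norm $O(\delta_1)$ on $C^{k-1+\lambda}$, because the commutator $[\nabla_v,\mathcal{P}]$ is again order zero. For $\delta_1$ small, $1-L_F$ is then invertible on $C^{j+\lambda}$ for every $j$, with the smallness depending only on $C^{k+\lambda}$ data. Induction on $j$ then gives $F\in C^{j+\lambda}$ for all $j$, so $\boldsymbol{\Phi}(t\poiss,\varphi)$ is smooth for $|t|\le1$. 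If the commutators prove awkward, the fallback is difference quotients in place of derivatives, which uses the same invertibility.
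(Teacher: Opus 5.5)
Your route differs from the paper's and can be completed, but as written it has a gap at exactly the step that carries the weight. Because $\mathcal P$ and $L_F$ have order zero, the differentiated equation $(1-L_F)\nabla_vF=R$ gains no derivatives on its own; this is not elliptic regularity. To upgrade $\nabla_vF\in C^{j-1+\lambda}$ to $C^{j+\lambda}$, given $R\in C^{j+\lambda}$, you must invert $1-L_F$ on $C^{j+\lambda}$ and then use uniqueness in $C^{j-1+\lambda}$. A Neumann series there needs $\|L_F\|_{C^{j+\lambda}\to C^{j+\lambda}}<1$. That norm carries $j$-dependent Schauder and Leibniz constants together with $\|F\|_{j+\lambda}$ and $\|\poiss\|_{j+\lambda}$, which is the same growth that defeated your majorant-series attempt. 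So the claim that $1-L_F$ is invertible on every $C^{j+\lambda}$ ``with smallness depending only on $C^{k+\lambda}$ data'' is the entire difficulty. The reason you give does not address it: that $[\nabla_v,\mathcal P]$ has order zero only shows $R\in C^{j+\lambda}$. Taken literally, your induction makes $\delta_1$ shrink with $j$, which gives only finite regularity on a $j$-dependent ball.

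The missing idea is to invert only in one fixed space. Suppose $F\in C^{j+\lambda}$. Form the iterated Lie derivatives $G=\mathcal L_{v_1}\cdots\mathcal L_{v_{j-k}}F$; these are still $2$-forms, so $\mathcal P$ acts on them. Take difference quotients $\Delta_h$ of $G$ along flows $\phi_h$ of smooth vector fields. Then $(1-L^h)\Delta_hG=R_h$, with $R_h$ bounded in $C^{k+\lambda}$ uniformly in $h$, and $L^h$ has $C^{k+\lambda}$-operator norm $O(\delta_1)$ uniformly in $h$. The Neumann series in $C^{k+\lambda}$ alone then gives $F\in C^{j+1+\lambda}$. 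So your difference-quotient ``fallback'' is not optional: it is what makes the smallness $j$-independent.

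The paper avoids all of this by changing the unknown to $\Theta=\overline{\mathbb G}_{\omega_0}(F^{0,2})$, so that $F=\omega_\varphi+2\,{\rm Re}(\del\bar\del^*_{\omega_0}\Theta)+2\,{\rm Re}(\bar\square_{\omega_0}\Theta)$. The algebraic equation \eqref{e1} then becomes a \emph{local} second-order nonlinear PDE $E_{t\poiss,\varphi}(\Theta)=0$ with smooth coefficients. Its linearization near $\Theta=0$ is a small perturbation of $2\sqrt{-1}\,\bar\square_{\omega_0}$, so standard nonlinear elliptic regularity makes $\Theta$ smooth, hence $F^{0,2}=\bar\square_{\omega_0}\Theta$ and then $F$ are smooth. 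Your $1-L_F$ is essentially that linearization composed with $\overline{\mathbb G}_{\omega_0}$, so in both arguments the smallness of $\poiss$ plays the role of ellipticity. The paper's choice of unknown is what lets off-the-shelf elliptic theory do the bootstrapping you would otherwise have to carry out by hand.
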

\begin{proof} We fix $\varphi\in U_2 \cap C^{\infty}(X)$, and use elliptic regularity as in \cite{KM} to show that ${\bf \Phi}(t\poiss, \varphi) \in C^{\infty}(X, \Wedge^2 T^*X)$.  Let $F_{\sigma, {\varphi}}(t)$ denote the convergent power series constructed in the proof of Lemma~\ref{l:gualtieri} with
\[ {\bf \Phi}(\sigma, \varphi):=F_{\sigma, \varphi}(1). \]
By the inductive construction of $F_{\sigma, \varphi}(t)$, it follows that
\begin{equation} \label{poisson-scale} F_{t\sigma, \varphi}(1)=  F_{\sigma, \varphi}(t), \end{equation}
and
\begin{equation}\label{e2}
(F_{\sigma, \varphi}(t))^{1,1} = \omega_{\varphi} + 2 {\rm Re}\Big(\del\bar \del^*_{\omega_0} \overline{\mathbb G}_{\omega_0}\left( F_{\sigma, \varphi}(t)^{0,2}\right)\Big), \qquad F_{\sigma, \varphi}(t)^{0,2} = \bar \square_{\omega_0}\left( \overline{\mathbb G}_{\omega_0}\left( F_{\sigma, \varphi}(t)^{0,2}\right)\right).
\end{equation}
Furthermore $F_{\sigma, \varphi}(t)$ satisfies \eqref{e1}. Letting 
\[ \Theta:= \overline{\mathbb G}_{\omega_0} \left(F_{\sigma, \varphi}(t)^{0,2}\right)=\overline{\mathbb G}_{\omega_0} \left(F_{t\sigma, \varphi}(1)^{0,2}\right) ,\]
we get
\begin{equation}\label{F/Theta}
F_{\sigma, \varphi}(t) = \omega_{\varphi} + 2 {\rm Re} (\partial \bar \partial^*_{\omega_0} \Theta) + 2 {\rm Re} \left(\bar\square_{\omega_0} \Theta \right).
\end{equation}
Substituting \eqref{F/Theta} into \eqref{e1}, we get
\begin{align}\label{e3}
E_{t\sigma, \varphi}(\Theta) =0,
\end{align}
where 
\[ E_{\sigma, \varphi} (\Theta):= 2\sqrt{-1} \bar \square_{\omega_0}\Theta -\Big(\big(\bar\square_{\omega_0}  +2  {\rm Re} \del \bar \del^*_{\omega_0})(\Theta) + \omega_{\varphi} \big) Q \big((\bar\square_{\omega_0}  +2 {\rm Re} \del \bar \del^*_{\omega_0}) (\Theta) + \omega_{\varphi}\big)\Big)^{0,2}.\]
Note that since $\varphi$ is smooth, $E_{\sigma, \varphi}$ is a non-linear second-order differential operator with smooth coefficients acting on sections  $\Theta$ of $C^{k+\lambda}(X, \Wedge^{0,2}(T^*X \otimes \C))$.  A type decomposition argument with respect to $J$ yields that the linearization of $E_{\sigma, \varphi}$ at $\Theta=0$ is
\[ (D_0 E_{\sigma, \varphi}) (\dot \Theta) = 2\sqrt{-1} \bar \square_{\omega_0} \dot \Theta.\]
It follows that for any $\Theta$ close to $0$ in the $C^{k+\lambda}$-norm for $k\geq 2$, $E_{\sigma, \varphi}$ is elliptic. For such $\Theta$, equation \eqref{e3} yields that $\Theta \in C^{\infty}(X, \Wedge^{0,2}(T^*X \otimes \C))$ (see e.g.\cite[p.~467, Thm.~41]{besse}).  In our case, $\Theta = \overline{\mathbb G}_{\omega_0} ({\bf \Phi}(\sigma, \varphi)^{0,2})$ is zero if $\sigma=0$, hence $E$ is elliptic at $\Theta$ for $\delta_2$ sufficiently small.  We then conclude that  $\Theta=\overline{\mathbb G}_{\omega_0} ({\bf \Phi}(\sigma, \varphi)^{0,2})$, and hence also ${\bf \Phi}(\sigma, \varphi)^{0,2}$ are smooth.  Finally, by \eqref{e2}, ${\bf \Phi}(\sigma, \varphi)$ is smooth.
\end{proof}

 We next consider the Fr\'echet differentiability of the map ${\bf \Phi}(\sigma, \varphi) : U_1 \times U_2 \to  C^{k+\lambda}(X, \Wedge^2 T^*X)$.
 
 \begin{lemma}\label{l:Frechet}  In the setup of Lemma \ref{l:gualtieri}, we may shrink $\delta_1, \delta_2$ so that
 \[ {\bf \Phi}(\sigma, \varphi) : U_1 \times U_2 \to C^{k+\lambda}(X, \Wedge^2 T^*X)\]
 is a Fr\'echet differentiable map of any order $r \geq 1$.
 \end{lemma}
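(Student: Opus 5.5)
The plan is to show that $\boldsymbol{\Phi}$ is the restriction to $t=1$ of a convergent power series whose coefficients are bounded multilinear (polynomial) maps in $(\sigma,\varphi)$. A locally uniformly convergent series of such maps is real-analytic, hence Fréchet differentiable of every order. Concretely, $F_0=\omega_\varphi=\omega_0+dd^c\varphi$ is affine and continuous from $C^{k+2+\lambda}(X)$ to $C^{k+\lambda}(X,\Wedge^2T^*X)$. The recursion \eqref{term condition} together with \eqref{Fn} expresses $F_n$ as
\[
F_n^{0,2}=-\tfrac12 (J^\sharp)^{-1}\Big(\sum_{i+j=n-1}F_iQF_j\Big)^{0,2},\qquad F_n=F_n^{0,2}+{\rm Re}\big(\del\bar\del^*_{\omega_0}\overline{\mathbb G}_{\omega_0}F_n^{0,2}\big)
\]
(with the reality convention as in \eqref{e2}). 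Here $Q={\rm Re}\,\sigma$ is real-linear in $\sigma$, the products $F_iQF_j$ are bounded bilinear on $C^{k+\lambda}$ (Hölder spaces are Banach algebras), and $\del\bar\del^*_{\omega_0}\overline{\mathbb G}_{\omega_0}$ is a zeroth order pseudodifferential operator. By Schauder theory this operator is bounded on $C^{k+\lambda}$, since it is a composition of two first-order operators with the inverse of the invertible elliptic operator $\bar\square_{\omega_0}$ (recall $H^{0,2}=0$). By induction, $F_n=P_n(\sigma,\varphi)$ is a continuous polynomial map of degree $n$ in $\sigma$ and of degree at most $n+1$ in $\omega_\varphi$, hence in $\varphi$.

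Next I would upgrade the majorant estimate from the proof of Lemma~\ref{l:gualtieri} to the polarized multilinear forms. Writing $\hat P_n$ for the symmetric multilinear map associated with $P_n$, the same recursion gives
\[
\|\hat P_n\|\le C\,\|\sigma\text{-slot}\|\sum_{i+j=n-1}\|\hat P_i\|\,\|\hat P_j\|
\]
in operator norm, with $C=C_{k+\lambda}$ from the Schauder estimate. Alternatively, and more simply, one can use the Cauchy estimate for holomorphic maps. Complexify, allowing $\sigma$ in a complex ball and $\varphi$ in a complex ball of $C^{k+2+\lambda}(X,\C)$, and drop the reality condition by letting the recursion act on complex $2$-forms. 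The recursion is then holomorphic in each step, and the Catalan-type majorant $\|F_n\|\le c_n (C\|\sigma\|)^n\|\omega_\varphi\|^{n+1}$ with $c_n\le 4^n$, as in \cite[Chapter 4, Thm.~2.1]{KM}, holds uniformly on slightly shrunken complex balls. Therefore $\sum_n F_n$ converges uniformly there to a holomorphic, in particular $C^\infty$-Fréchet, map between complex Banach spaces. Its restriction to the real points is real-analytic, and it agrees with $\boldsymbol{\Phi}$ by \eqref{poisson-scale}. This is where shrinking $\delta_1,\delta_2$ enters: one chooses them so that $16C\|\omega_\varphi\|^2\|\sigma\|<1/2$ on the complexified balls.

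The main obstacle is the uniformity: the radius of convergence must not degenerate as $(\sigma,\varphi)$ varies, and the operator $(J^\sharp)^{-1}(\cdot)^{0,2}$ together with ${\rm Re}$ must be handled so that the complexification still yields bounded complex-linear operators. I would address this by replacing ${\rm Re}(A)$ with $\tfrac12(A+\overline{A}^{\,c})$, where $\overline{A}^{\,c}$ is the conjugate-linear extension made complex-linear through the independent complex variable $\bar\sigma$. Once uniform holomorphic bounds hold on a polydisc, the Cauchy integral formula bounds all derivatives, which gives Fréchet differentiability of every order $r$, and $K$-equivariance is unaffected.
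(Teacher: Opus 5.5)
Your proof is correct, but it takes a different route from the paper. The paper never returns to the coefficients $F_n$. It reuses the elliptic reformulation from the proof of Lemma~\ref{l:smooth}, viewing $E(\sigma,\varphi,\Theta):=E_{\sigma,\varphi}(\Theta)$ as a polynomial (hence smooth) map $U_1\times U_2\times C^{k+2+\lambda}(X,\Wedge^{0,2}X)\to C^{k+\lambda}(X,\Wedge^{0,2}X)$. Its $\Theta$-derivative at the origin is $2\sqrt{-1}\,\bar\square_{\omega_0}$, an isomorphism since $H^{0,2}(X,\C)=0$. The Banach implicit function theorem then produces a smooth solution map $\Theta(\sigma,\varphi)$. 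Local uniqueness identifies it with $\overline{\mathbb G}_{\omega_0}(\boldsymbol{\Phi}(\sigma,\varphi)^{0,2})$, and $\boldsymbol{\Phi}=\omega_\varphi+2{\rm Re}(\del\bar\del^*_{\omega_0}\Theta)+2{\rm Re}(\bar\square_{\omega_0}\Theta)$ inherits the regularity. That argument is soft: it needs no control on the $F_n$ beyond the smallness of the series solution required to invoke uniqueness. Yours instead extracts real-analyticity directly from the Catalan majorant already used for convergence, with no implicit function theorem and no identification step. The price is that you must control multilinear coefficients rather than just values.

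On that point, your opening sentence overstates things. A locally uniformly convergent series of continuous real polynomials need not even be $C^1$ (think of Weierstrass approximation). What actually carries the proof is one of the two mechanisms you give next. The first is the geometric bound on the polarized coefficients; this requires expanding $\omega_\varphi=\omega_0+dd^c\varphi$ about $\omega_0$, which is not small, and summing the resulting binomial terms. The second, cleaner one is uniform convergence of the complexified recursion on a complex neighbourhood, where the limit is holomorphic and Cauchy estimates bound all derivatives. You do supply both, so the argument is complete. The appeal to \eqref{poisson-scale} is unnecessary, since $\boldsymbol{\Phi}(\sigma,\varphi)=\sum_n F_n$ by definition.
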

 \begin{proof} As in the proof of Lemma~\ref{l:smooth}, we consider
 \[  E: U_1 \times U_2 \times C^{k+2+\lambda}(X, \Wedge^{0,2}X) \to C^{k+\lambda}(X, \Wedge^{0,2}X), \qquad E(\sigma, \varphi, \Theta) := E_{\sigma, \varphi}(\Theta). \]
 As $E$ is a composition of (linear) Fr\'echet maps, it is  Fr\'echet differentiable of any given order $r$. Furthermore, as shown in the proof if Lemma~\ref{l:smooth}, the partial derivative of $E$ in the direction of $\Theta$ at $\Theta=0$ is  $2\sqrt{-1} {\bar \square}_{\omega_0}$ which is an isomorphism since $H^{0,2}(X, \C)=0$.  As $E(0,0, 0)=0$, we can apply the implicit function theorem and find a Fr\'echet differentiable map $\Theta(\sigma, \varphi)$, defined on  possibly smaller neighbourhoods $U_1, U_2$,  such that 
 \[ \Theta(\sigma, \varphi) : U_1 \times U_2 \to C^{k+2+\lambda}(X, \Wedge^{0,2}X), \qquad E(\sigma, \varphi, \Theta(\sigma, \varphi)) =0.\]
Using the uniqueness of the solution $\Theta(\sigma, \varphi)$ with the above property, we recover ${\bf \Phi}(\sigma, \varphi)$ by
 \[{\bf \Phi}(\sigma, \varphi) =  \omega_{\varphi} + 2{\rm Re}(\partial \bar \partial^*_{\omega_0} \Theta(\sigma, \varphi)) + 2{\rm Re}\left(\bar\square_{\omega_0} \Theta(\sigma, \varphi)\right).\]
 It follows from the above expression that ${\bf \Phi}$ is a differentiable Fr\'echet map, being a composition of such maps.
 \end{proof}
 \noindent

\subsection{Moser's Lemma and the map to \texorpdfstring{$\AGK^{k-2}_{\omega_0}$}{}}

Next we employ a canonical diffeomorphism pullback to the extended Gualtieri map.   For $\sigma \in \Pi$, this is the usual Moser lift defined for a family of symplectic forms.  Similar to the previous subsection, we formally extend this process to general $\sigma \in W$.

First observe that for $\sigma \in U_1 \cap \Pi$, \eqref{F/Theta} and the fact that $\bar \partial \mathbb{G}_{\omega_0}(F_{\sigma, \varphi}(t))^{0,2}=0$ (which follows from the fact that $F_{\sigma, \varphi}$ is closed) lead to
\[ F_{\sigma, \varphi}(t) = \omega_{\varphi} + d \delta_{\omega_0} \mathbb{G}_{\omega_0}\left(F_{\sigma, \varphi}(t)^{(2,0) + (0,2)}\right).\]
Now for arbitrary $(\sigma, \varphi) \in U_1 \times U_2$, we let
\[ a_{\sigma, \varphi}(t): = \delta_{\omega_0} \mathbb{G}_{\omega_0}(F_{\sigma, \varphi}(t)^{(2,0)+(0,2)})\]
which is uniquely determined by $(\sigma, \varphi)$.  Notice that by \eqref{F1-(0,2)}
\begin{equation}\label{a-0}
 a_{\sigma, \varphi}(0)=0, \qquad \dot a_{\sigma, \varphi}(0)=  \frac{1}{2}\delta_{\omega_0} \mathbb{G}_{\omega_0}(\omega_{\varphi} Q \omega_{\varphi}J),\end{equation}
where, we recall, $Q= {\rm Re}(\poiss)$.

Considering again $\sigma \in U_1 \cap \Pi$, by \eqref{poisson-scale} it follows that we get an isotopy of symplectic forms
\[ {\bf \Phi}(t\sigma, t\varphi) = F_{t\sigma, t\varphi}(1)=F_{\sigma, t\varphi}(t)= \omega_0 + t dd^c \varphi + da_{\sigma, t\varphi}(t), \qquad t\in [0,1], \]
all taming $J_0$ and interpolating between $\omega_0$ and ${\bf \Phi}(\sigma, \varphi)$.  This gives rise to a Moser lift $\Psi_{\sigma, \varphi}(t) \in {\rm Diff}_0^{k-1}(X)$, defined by integrating the time dependent vector field
\begin{equation} \label{Moser}
V_{\sigma, \varphi}(t) := -{\bf \Phi}(t\sigma, t \varphi)^{-1} \left(d^c \varphi + \frac{\partial}{\partial t}a_{\sigma, t\varphi}(t)\right), \end{equation}
which is of regularity $C^{k-1}(\R \times X)$, and which satisfies
\[ \left(\Psi_{\sigma, \varphi}(t)\right)^*({\bf \Phi}(t\sigma, t\varphi))=\omega_0.\]
Notice that since equation (\ref{a-0}) makes sense for arbitrary $\sigma$, we can use \eqref{Moser} to define, for any $(\sigma, \varphi) \in U_1 \times U_2$,  a family of diffeomorphisms $\Psi_{\sigma, \varphi}(t) \in {\rm Diff}^{k-1}_0(X)$.  For $\sigma \in \Pi$ we have $(\Psi_{\sigma, \varphi}(t))^* F_{\sigma, \varphi}(t) = \omega_0$, but not in general.

\begin{defn}\label{d:tilde-phi} In the setup of Lemma~\ref{l:gualtieri},  for any $(\sigma, \varphi) \in U_1 \times U_2$, we let 
\[\widetilde{\bf \Phi}(\sigma, \varphi) := \Psi_{\sigma, \varphi}(1) \cdot J_0\]
be the induced $K$-equivariant  map to the space of integrable almost complex structures (of regularity $C^{k-2}(X)$)  on $X$. For any $\sigma \in \Pi \cap U$, we have $\widetilde{\bf \Phi}(\sigma, \varphi) \in \mathcal{AGK}^{k-2}_{\omega_0}$.
\end{defn}
Without entering at this point to any subtle regularity analysis of $\widetilde{\bf \Phi}$, we notice that  this map is  Gateaux differentiable at $(0,0)$ in the following sense:
\begin{lemma}\label{l:Gateaux-diff} For any $(\sigma,\varphi) \in U_1 \times U_2$, 
\[ \lim_{t\to 0}\left(\frac{\widetilde{\bf \Phi}(t\sigma, t\varphi)}{t}\right) = (\mathcal{L}_{\nabla_{\omega_0} \varphi} J_0 )- \tfrac{1}{2} \mathcal{L}_{(\delta_{\omega_0} \mathbb{G}_{\omega_0}(\omega_0 Q \omega_0))^{\sharp}} J_0.\]
\end{lemma}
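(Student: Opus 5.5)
The plan is to compute the first-order Taylor expansion in $t$ of the Moser diffeomorphism $\Psi_{t\sigma,t\varphi}(1)$. The limit in the statement should be read as $\lim_{t\to0}\big(\widetilde{\bf \Phi}(t\sigma,t\varphi)-J_0\big)/t$. Once $\Psi_{t\sigma,t\varphi}(1)=\exp(tX+O(t^2))$ is known for an explicit vector field $X$, the derivative of $\Psi_{t\sigma,t\varphi}(1)\cdot J_0$ is $\mathcal{L}_X J_0$, with the sign fixed by the convention for the action $\Psi\cdot J_0$. It then remains to identify $X$ with $\nabla_{\omega_0}\varphi-\tfrac12\big(\delta_{\omega_0}\mathbb{G}_{\omega_0}(\omega_0Q\omega_0)\big)^{\sharp}$.

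\emph{Step 1: rescaling.} By \eqref{poisson-scale} we have $F_{t\sigma,\psi}(s)=F_{\sigma,\psi}(st)$, so $a_{t\sigma,\psi}(s)=a_{\sigma,\psi}(st)$. Substituting into \eqref{Moser} with $(\sigma,\varphi)$ replaced by $(t\sigma,t\varphi)$ gives
\[
V_{t\sigma,t\varphi}(s)=-{\bf \Phi}(st\sigma,st\varphi)^{-1}\Big(t\,d^c\varphi+t\,\dot a_{\sigma,st\varphi}(st)\Big).
\]

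\emph{Step 2: expansion of the vector field.} Lemma~\ref{l:Frechet} gives differentiability of ${\bf \Phi}$, and hence of $\Theta$ and of $a$, in $(\sigma,\varphi,t)$. Together with ${\bf \Phi}(0,0)=\omega_0$ and \eqref{a-0}, this gives
\[
V_{t\sigma,t\varphi}(s)=tX+O(t^2), \qquad X:=-\omega_0^{-1}\Big(d^c\varphi+\tfrac12\delta_{\omega_0}\mathbb{G}_{\omega_0}(\omega_0Q\omega_0J)\Big),
\]
uniformly in $s\in[0,1]$ in $C^{k-1}$. Smooth dependence of ODE flows on parameters then yields $\Psi_{t\sigma,t\varphi}(1)=\mathrm{id}+tX+O(t^2)$ in $C^{k-1}$. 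Consequently $\Psi_{t\sigma,t\varphi}(1)\cdot J_0=J_0+t\,\mathcal{L}_XJ_0+O(t^2)$ in $C^{k-2}$, after fixing the sign from the convention that $\Psi\cdot J$ denotes push-forward.

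\emph{Step 3: identification of $X$.} For the first term, $\omega_0=g_0J_0$ and $d^c\varphi=-J_0^*d\varphi$ give $-\omega_0^{-1}d^c\varphi=\nabla_{\omega_0}\varphi$, possibly up to the sign convention for $d^c$. For the second term, since $\omega_0$ is K\"ahler, $\mathbb{G}_{\omega_0}$ and $\delta_{\omega_0}$ commute with the natural action of $J_0$ on forms. Moreover $\omega_0Q\omega_0$ has type $(2,0)+(0,2)$, so composing with $J$ acts by a constant multiple of the complex structure on such forms. Pulling $J$ through $\delta\mathbb{G}$ should therefore give $\omega_0^{-1}\big(\delta\mathbb{G}(\omega_0Q\omega_0J)\big)=\big(\delta\mathbb{G}(\omega_0Q\omega_0)\big)^{\sharp}$, possibly up to a sign. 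This identifies $X$ with the vector field in the statement.

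I expect the main obstacle to be the sign and type bookkeeping in Step 3, specifically the commutation of $J$ past $\delta_{\omega_0}\mathbb{G}_{\omega_0}$ on $(2,0)+(0,2)$-forms, which requires the Kähler identities. This is also where the overall signs of the two Lie derivative terms get fixed. The uniformity in Step 2 is routine, given Lemma~\ref{l:Frechet} and the regularity of ${\bf \Phi}^{-1}$ near $\omega_0$.
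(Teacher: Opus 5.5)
Your proposal is correct and follows the paper's proof. Both rescale the Moser field via \eqref{poisson-scale}, identify the first-order field (your $X$, which is the paper's $V_{\sigma,\varphi}(0)$) from ${\bf \Phi}(0,0)=\omega_0$ and \eqref{a-0}, and convert it into a Lie derivative of $J_0$. The paper shortcuts your Step~2: your Step~1 formula says exactly $V_{t\sigma,t\varphi}(s)=t\,V_{\sigma,\varphi}(st)$, hence $\Psi_{t\sigma,t\varphi}(1)=\Psi_{\sigma,\varphi}(t)$, and the limit is literally $\frac{d}{dt}\big|_{t=0}\Psi_{\sigma,\varphi}(t)\cdot J_0=-\mathcal{L}_{V_{\sigma,\varphi}(0)}J_0$, with no uniform expansion or flow-parameter argument needed. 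Your hedged signs resolve as follows under the paper's conventions: $-\omega_0^{-1}(d^c\varphi)=-\nabla_{\omega_0}\varphi$ and $\omega_0^{-1}\big(\delta_{\omega_0}\mathbb{G}_{\omega_0}(\eta J_0)\big)=-\big(\delta_{\omega_0}\mathbb{G}_{\omega_0}\eta\big)^{\sharp}$ for $\eta=\omega_0Q\omega_0$, so $X$ is minus the field in the statement, and this cancels the minus sign in $-\mathcal{L}_XJ_0$ for the push-forward action.
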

\begin{proof} By \eqref{poisson-scale}, $a_{s\sigma, \varphi} (t) = a_{\sigma, \varphi}(st)$, so we have
\[ 
\begin{split}
V_{s\sigma, s\varphi}(t) &= -{\bf \Phi}(ts\sigma, ts\varphi)^{-1}\left(sd^c\varphi + \frac{\partial}{\partial t} a_{s\sigma, ts\varphi}(t)\right) \\
&= -{\bf \Phi}(ts\sigma, ts\varphi)^{-1}\left(sd^c\varphi + \frac{\partial}{\partial t} a_{\sigma, ts\varphi}(st)\right)\\
&= -s\left( {\bf \Phi}(ts\sigma, ts\varphi)^{-1}\left(d^c\varphi + \left. \frac{\partial}{\partial \lambda} \right|_{\lambda =st} a_{\sigma, \lambda\varphi}(\lambda)\right)\right) \\
&= s V_{\sigma, \varphi}(st).
\end{split}\]
This yields
\[ \Psi_{s\sigma, s\varphi}(t)= \Psi_{\sigma, \varphi}(st). \]
Using the above,  we compute
\[
\begin{split}
 \lim_{t\to 0}\left(\frac{\widetilde{\bf \Phi}(t\sigma, t\varphi)}{t}\right) &=  \lim_{t\to 0}\left(\frac{\Psi_{t\sigma, t\varphi}(1) \cdot J_0}{t}\right) = \lim_{t\to 0}\left(\frac{\Psi_{\sigma, \varphi}(t) \cdot J_0}{t}\right) \\
 &= \left.\frac{d}{dt} \right|_{t=0} \left(\Psi_{\sigma, \varphi}(t) \cdot J_0\right) = -\mathcal{L}_{V_{\sigma, \varphi}(0)}J_0.
 \end{split} \]
From \eqref{Moser} (noting that ${\bf \Phi}(0,0)=\omega_0$) and  \eqref{a-0}, we get
\[V_{\sigma, \varphi}(0) = -\omega_0^{-1}(J_0 d\varphi)-  \frac{1}{2}\omega_0^{-1}\left(\delta_{\omega_0} \mathbb{G}_{\omega_0}\left(\omega_0 Q \omega_0 J_0\right)\right), \qquad Q= {\rm Re}(\sigma). \]
The lemma follows easily.
\end{proof}

\subsection{Finite dimensional GIT setup}

Inspired by the strategy of \cite{sz}, in this subsection we derive a formal finite dimensional GIT setup on $U_1 \cap W$,  where $W$ is the vector subspace generated by the Poisson locus $\Pi \subset H^0(X, \Wedge^2 T_X^{1,0})$. This uses ideas of LeBrun-Simanca~\cite{LS} to first solve 
a weaker PDE problem than the cscGK one,  which gives rise to a map from  $U_1 \cap \Pi$ to GK structures whose generalized scalar curvatures belong to a certain finite dimensional subspace of functions.  The extension of this map to the non-singular domain $U_1\cap W$ requires formally extending the generalized scalar curvature map to non-closed $F$.  We then pull back via the Moser lift to obtain a map from $U_1 \cap W$ into $\mathcal{AGK}_{\omega_0}$.  The idea is to obtain a natural symplectic structure on $U_1$ with which to run GIT arguments. Here it is crucial that the differential of this map at $0$ is complex with respect to the natural complex structures.  We are only able to establish this point in the case $\omega_0$ is K\"ahler-Einstein, through delicate K\"ahler/Bochner identities.

To begin, note that since $K$ is the group of K\"ahler isometries of $(J_0,  \omega_0, g_0)$ inside $\Aut_{\rm red}(X,J_0)$, its Lie algebra $\mathfrak{k}$ consist of all Killing vector fields $\xi$ of  $(J_0, \omega_0, g_0)$ which admit a Killing potential $h_{\omega_0}^{\xi}\in C^{\infty}(X)$, defined by the property
\[ \imath_{\xi} \omega_0 = - dh_{\omega_0}^{\xi}, \]
or, equivalently,  $\xi = J \nabla h_{\omega_0}^{\xi}$. Such Killing potentials are defined up to an additive constant, and are unique if we normalize them to have zero mean-value with respect to
 $dV_{\omega_0}$. We shall denote by $\mathfrak{k}_{\omega_0}$ the finite dimensional vector space of all normalized Killing potentials of elements of $\mathfrak{k}$ and by $\R \oplus \mathfrak{k}_{\omega_0}$ the space of all Killing potentials. It is well-known (see e.g. \cite{gauduchon-book}) that $\mathfrak{k}_{\omega_0}$  is a finite dimensional Lie algebra with respect to the $\omega_0$-Poisson bracket,  which is isomorphic to $\mathfrak{k}$.
\begin{defn}\label{d:vector-space} For $F:={\bf \Phi}(\sigma, \varphi)$, let $\mathfrak{k}_{F}$  denote the finite dimensional vector space of  $C^{k-2+\lambda}(X)$ functions, defined  by
\[ \mathfrak{k}_{F}:= \left(\Psi_{\sigma, \varphi}(1)^{-1}\right)^*(\mathfrak{k}_{\omega_0}).\]
Notice that in the above definition,  we do not require $F$ to be closed or $K$-invariant.  Further we shall denote
\[ h_{F}^{\xi}:= \left(\Psi_{\sigma, \varphi}(1)^{-1}\right)^*(h_{\omega_0}^{\xi}),\]
emphasizing again that $F$ is not in general closed or $\xi$-invariant, and  $h_{F}^{\xi}$ is not in general an $F$-Hamiltonian for $\xi$.  Finally, we denote by $\Pi_{\mathfrak{k}_{F}} : C^{k-2+\lambda}(X) \to \mathfrak{k}_{F}$ the $L^2$-orthogonal projection to $\mathfrak{k}_{F}$ with respect to $dV_F$,  and by $\Pi_{\mathfrak{k}_F}^{\perp}= \left({\rm Id} - \Pi_{\mathfrak{k}_{F}}\right)$ the $L^2$-orthogonal complement.
\end{defn}

Following \cite{ASU}, we can extend the notion of generalized scalar curvature to the space $\mathcal{AGK}_F$, i.e to the setting where $F$ is not necessarily closed.  As discussed above, when $F$ is closed $\Gscal(F, J_0)$ is identified with the momentum map at $J_0$ for the action of ${\rm Ham}_F$ on the space $\mathcal{AGK}_F$.

\begin{defn}\label{d:Gscal} Suppose $F$ is a $2$-form which tames $J$ and let  $g_F:=-(FJ)^{\rm sym}$ be the corresponding Hermitian metric on $(X, J)$, and let $b = (-FJ)^{\rm skew}$. The smooth function $\Gscal(F, J)$ defined by \eqref{eq:Gscal} is called the \emph{generalized scalar curvature} of $(F, J)$.
\end{defn}

\begin{rem}
We emphasize that this notion of $\Gscal$ agrees with the scalar curvature $\Goto$ for genuine generalized K\"ahler structures, but not in general.  Whereas Goto's scalar curvature is extended to nonintegrable structures by the formal moment map property, here we give an ad-hoc extension to obtain an explicit Frechet differentiable map on pairs $(F, J)$.
\end{rem}

We next adopt ideas of \cite{LS, sz} to construct the required map on the level of $2$-forms.

\begin{lemma}\label{l:sz} There exists a $K$-equivariant infinitely differentiable map $\boldsymbol{S} : U_1 \to C^{k+\lambda}(X, \Wedge^2 T^*X)$ such that 
\begin{enumerate}
\item $\boldsymbol{S}(0)=\omega_0$;
\item $\boldsymbol{S}(\sigma)$ tames $J_0$ and $\Gscal(\boldsymbol{S}(\sigma), J_0) \in \R \oplus \mathfrak{k}_{\boldsymbol{S}(\sigma)}$.
\item If $\sigma \in \Pi$, then $(\boldsymbol{S}(\sigma), J_0)$ is generalized K\"ahler.
\end{enumerate}
\end{lemma}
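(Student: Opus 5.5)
The plan is to apply the implicit function theorem in the style of LeBrun--Simanca and Sz\'ekelyhidi to the composite of the extended Gualtieri map with the generalized scalar curvature, modulo the finite dimensional space of Killing potentials. Concretely, consider
\[
\mathcal{G}: U_1 \times \big(U_2 \cap \mathfrak{k}_{\omega_0}^{\perp}\big) \times \mathfrak{k}_{\omega_0} \to C^{k-4+\lambda}(X),
\]
\[
\mathcal{G}(\sigma,\varphi,h) := \Gscal\big({\bf \Phi}(\sigma,\varphi), J_0\big) - \overline{\mu} - \Psi_{\sigma,\varphi}(1)^{-1,*} h .
\]
Here $\mathfrak{k}_{\omega_0}^{\perp}$ denotes the $L^2(\omega_0)$-orthogonal complement of $\R\oplus\mathfrak{k}_{\omega_0}$ in $C^{k+2+\lambda}$, and $\overline{\mu}$ is the topological average. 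By Lemma~\ref{l:Frechet} and Definition~\ref{d:Gscal}, the first term is Fr\'echet differentiable of all orders, since $\Gscal(F,J_0)$ is a universal polynomial expression in $F$, $F^{-1}$ and their derivatives up to order two, with $F$ taming $J_0$. The Moser lift $\Psi_{\sigma,\varphi}(1)$ depends differentiably on $(\sigma,\varphi)$ through \eqref{Moser}. To manage the regularity loss in pulling back Killing potentials, I would compose with the smooth functions $h\in\mathfrak{k}_{\omega_0}$; alternatively, one can replace the pulled-back space by the equivalent condition of lying in $\R\oplus\mathfrak{k}_F$ after projection, using $\Pi_{\mathfrak{k}_F}$.

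Since $\omega_0$ is cscK, we have $\mathcal{G}(0,0,0)=0$. By item (1) of Lemma~\ref{l:gualtieri}, the $(\varphi,h)$-derivative at the origin is
\[
(\dot\varphi,\dot h)\mapsto -\mathbb{L}_{\omega_0}\dot\varphi - \dot h,
\]
where $\mathbb{L}_{\omega_0}=\mathcal{D}^*\mathcal{D}$ is the Lichnerowicz operator. This holds because $\Gscal$ reduces to the usual scalar curvature on K\"ahler forms, and its linearization along $dd^c\dot\varphi$ at a cscK metric is $-\mathcal{D}^*\mathcal{D}\dot\varphi$. The kernel of $\mathbb{L}_{\omega_0}$ is exactly $\R\oplus\mathfrak{k}_{\omega_0}$, because $K$ consists of all reduced Killing fields and, by Lichnerowicz--Matsushima, the holomorphy potentials are Killing. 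Hence $\mathbb{L}_{\omega_0}$ is an isomorphism from $\mathfrak{k}_{\omega_0}^{\perp}$ onto the $L^2$-complement of $\R\oplus\mathfrak{k}_{\omega_0}$. Adding the $\dot h$ component gives a bijection onto the mean-zero functions. The implicit function theorem then yields, after shrinking $U_1$, a unique differentiable solution $(\varphi(\sigma),h(\sigma))$, and we set $\boldsymbol S(\sigma):={\bf \Phi}(\sigma,\varphi(\sigma))$.

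The listed properties then follow as below.
\begin{enumerate}
\item[(1)] Uniqueness forces $\varphi(0)=0$, so $\boldsymbol S(0)=\omega_0$.
\item[(2)] Item (3) of Lemma~\ref{l:gualtieri} gives taming, and the equation gives $\Gscal\in\R\oplus\mathfrak{k}_{\boldsymbol S(\sigma)}$ by Definition~\ref{d:vector-space}.
\item[(3)] For $\sigma\in\Pi$, item (2) of Lemma~\ref{l:gualtieri} gives the generalized K\"ahler property.
\end{enumerate}
For $K$-equivariance, note that ${\bf \Phi}$ and $\Psi$ are $K$-equivariant, that $\mathfrak{k}_{\omega_0}$ and its complement are $K$-invariant under the adjoint action, and that the balls are $K$-invariant. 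Hence $k\cdot(\varphi(\sigma),h(\sigma))$ solves the equation at $k\cdot\sigma$, and uniqueness gives equivariance.

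The main obstacle I expect is the regularity bookkeeping. The Moser diffeomorphism $\Psi_{\sigma,\varphi}(1)$ is only $C^{k-1}$, so the pulled-back potentials $h^\xi_F$ lose derivatives, which threatens Fr\'echet differentiability of the $h$-term in $\mathcal{G}$ as a map into a fixed H\"older space. To handle this, I would first reformulate the condition as
\[
\Pi^{\perp}_{\mathfrak{k}_F}\big(\Gscal(F,J_0)-\overline{\mu}\big)=0,
\]
and express the projection through integrals $\int \Gscal\, h^\xi_F\, dV_F = \int \Psi^*(\Gscal\, dV_F)\, h^\xi_{\omega_0}$. These integrals depend smoothly on $(\sigma,\varphi)$ after a change of variables. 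Only the finitely many coefficients then involve $\Psi$, and the infinite dimensional part stays a differentiable map of $\varphi$ alone. If needed, I would take $k$ large and accept a fixed finite loss of derivatives.
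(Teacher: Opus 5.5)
Your proposal follows the same LeBrun--Simanca strategy as the paper: implicit function theorem at $\omega_0$, with $\varphi$-linearization the Lichnerowicz operator $\mathbb{L}_{\omega_0}$ whose kernel $\R\oplus\mathfrak{k}_{\omega_0}$ is absorbed by a finite-dimensional space, and $K$-equivariance by uniqueness. The difference is in how you handle that finite-dimensional correction.

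\emph{The paper's route.} It applies the implicit function theorem to $\Pi^{\perp}_{\mathfrak{k}_{\omega_0}}\Gscal(\boldsymbol{\Phi}(\sigma,\varphi),J_0)$, with the \emph{fixed} projection. That map involves only $\boldsymbol{\Phi}$ and $\Gscal$, so it is smooth by Lemma~\ref{l:Frechet}. It yields a smooth $\sigma\mapsto\varphi_\sigma$ with $\Gscal\in\R\oplus\mathfrak{k}_{\omega_0}$. The paper then invokes continuity of $\mathfrak{k}_{F_\sigma}$ and ``the standard argument'' of \cite{LS} to pass to $\R\oplus\mathfrak{k}_{\boldsymbol{S}(\sigma)}$. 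That transported space is what is needed later: pulling back by $\Psi_{\sigma,\varphi_\sigma}(1)$ gives item (3) of the corollary following the lemma.

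\emph{Your route.} You put the transported space into the equation from the start through the extra unknown $h$. Item (2) then comes out directly, and the step the paper leaves to \cite{LS} is made explicit. The price is that the Moser diffeomorphism now sits inside the equation.

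Two slips must be fixed before your implicit function theorem applies.
\begin{enumerate}
\item[(i)] The codomain must be $C^{k-2+\lambda}(X)$, since $\boldsymbol{\Phi}$ lands in $C^{k+\lambda}$ and $\Gscal$ is second order. Into $C^{k-4+\lambda}$, the operator $\mathbb{L}_{\omega_0}$ on $C^{k+2+\lambda}$ is not onto.
\item[(ii)] With the constant frozen at $\overline{\mu}$ and $h$ of mean zero, your linearization reaches only mean-zero functions. But $\mathcal{G}$ takes values in no fixed hyperplane. For non-closed $F$ the mean of $\Gscal(F,J_0)$ is not topological, and $(\Psi^{-1})^*h$ has $dV_F$-mean zero only when $\Psi^*F=\omega_0$. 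Let the constant be a free unknown, i.e.\ take $h\in\R\oplus\mathfrak{k}_{\omega_0}$. This is harmless, since item (2) allows $\R\oplus\mathfrak{k}_{\boldsymbol{S}(\sigma)}$.
\end{enumerate}

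Your proposed cure for the loss of derivatives does not work as stated. The projection $\Pi_{\mathfrak{k}_F}u=\sum_i c_i h^i_F$ contains the functions $h^i_F=h^i_{\omega_0}\circ\Psi^{-1}$ themselves, not only the coefficients $c_i$. So $\Psi$ stays in the infinite-dimensional equation.

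Since $h$ is smooth and the Moser field is $C^{k+\lambda}$, the map $(\sigma,\varphi)\mapsto h\circ\Psi_{\sigma,\varphi}(1)^{-1}$ is still $C^1$ into $C^{k-2+\lambda}$. That is enough for the implicit function theorem. However, these spaces then give only finitely many derivatives of the solution map. The infinite differentiability in the statement would need a further argument; ``taking $k$ large'' is not by itself one. The paper's fixed-projection step avoids this issue, at the cost of leaving the passage to $\mathfrak{k}_{\boldsymbol{S}(\sigma)}$ to \cite{LS}.
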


\begin{proof} We compose the map ${\bf \Phi}$ with $\Gscal$ to yield
\[ {\boldsymbol{\Psi}}(\sigma, \varphi) := \Gscal(\boldsymbol{\Phi}(\sigma, \omega_0 + dd^c_{J_0} \varphi), J_0).\]
This is a  Fr\'echet differentiable map (as a composition of such maps, see Lemma~\ref{l:Frechet}) defined on  $U_1 \times U_2$ with values in  $C^{k-2+\lambda}(X)$. 
Observing that ${\boldsymbol{\Psi}}(0, \varphi)= \Scal(\omega_\varphi)$, and using that $\omega_0$ is cscK, it follows that the differential of ${\bf \Psi}$ in the direction of $U_2$ at $\varphi=0$ is therefore
\[ (d_{U_2} {\boldsymbol{\Psi}})_{0} (\dot \varphi)= \mathbb{L}_{\omega_0}(\dot \varphi), \]
where $\mathbb{L}_{\omega_0}=\delta_{\omega_0} \delta_{\omega_0} \nabla_{\omega_0}^{-} d$ is the Lichnerowicz operator of $(M, J_0, \omega_0, g_0)$.  Recall that the kernel of $\mathbb{L}_{\omega_0}$ is precisely the space $\R \oplus \mathfrak{k}_{\omega_0}$. We can thus consider the LeBrun--Simanca~\cite{LS} modified map
\[ {\boldsymbol{\Psi}}^{\perp} : U_{1} \times U_2^{\perp} \to (C^{k-2+\lambda}(X))^{\perp}, \qquad {\boldsymbol{\Psi}}(\sigma, \varphi)^{\perp}:= \Pi_{\mathfrak{k}_{\omega_0}}^{\perp}\left(\Gscal(\boldsymbol{\Phi}(\sigma, \varphi), J_0)\right),\]
where $(\cdot )^{\perp}$ stands for the $L^2(X, \omega_0)$-orthogonal  complement of $\R \oplus \mathfrak{k}_{\omega_0}$ and $\Pi_{\mathfrak{k}_{\omega_0}}^{\perp}$ for the $L^2(X, \omega_0)$-orthogonal projection to $(\cdot )^{\perp}$.
We can apply the implicit function theorem to $\boldsymbol{\Psi}^{\perp}$ in this setup,  and obtain an $r$-differentiable map 
\begin{equation}\label{eq:phi-sigma} \sigma \to \varphi_\sigma \in C^{k+2+\lambda}(X)^{\perp} \end{equation}
such that  
\[ \varphi_0\equiv 0,  \, \, \qquad \, \,  \Gscal(\boldsymbol{\Phi}(\sigma, \varphi_\sigma), J_0) \in \R \oplus \mathfrak{k}_{\omega_0} \quad \forall \sigma \in U_1.\]
We now define
\[ \boldsymbol{S}(\sigma):= \boldsymbol{\Phi}(\sigma, \varphi_\sigma).\]
This map is Fr\'echet differentiable of order $r$ by Lemma~\ref{l:Frechet}.  Using that the vector spaces $\mathfrak{k}_{F_{\sigma}}$ in Definition~\ref{d:vector-space} continuously vary in $\sigma$, we apply the now standard argument from \cite{LS} to conclude that $\Gscal(\boldsymbol{S}(\sigma), J_0) \in \mathfrak{k}_{F_{\sigma}}$.
\end{proof}

\begin{cor} Let 
\begin{align*}
  \widetilde{\bf S}(\sigma):= \widetilde{\boldsymbol{\Phi}}(\sigma, \varphi_{\sigma}),  
\end{align*}
where $\varphi_{\poiss}$ is given by \eqref{eq:phi-sigma} and $\widetilde{\boldsymbol{\Phi}}$ is the $2$-form introduced in Definition~\ref{d:tilde-phi}.
For any $r>1$, the integer $k$ in the definitions of the maps ${\bf \Phi}$ and ${\bf S}$ can be chosen large enough so that $\widetilde{\bf S}$ is $r$-times continuously differentiable, $K$-equivariant, and satisfies \begin{enumerate}
\item $\widetilde{\bf S}(0)= J_0$.
\item Shrinking $\delta_1$ if necessary, $\widetilde{\bf S}$ takes values in $\mathcal {AGK}_{\omega_0}$.
\item $\Gscal(\omega_0, \widetilde{\bf S}(\sigma)) \in \R \oplus \mathfrak{k}_{\omega_0}$ for all $\sigma \in U_1 \cap \Pi$.
\end{enumerate}
\end{cor}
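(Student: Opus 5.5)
The corollary should follow by composing earlier constructions, so I plan to treat it as bookkeeping on top of Lemma~\ref{l:sz}, Lemma~\ref{l:Frechet} and Definition~\ref{d:tilde-phi}.

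\emph{Regularity.} The map $\sigma\mapsto\varphi_\sigma\in C^{k+2+\lambda}(X)$ from \eqref{eq:phi-sigma} is $r$-times Fr\'echet differentiable by the implicit function theorem. By Lemma~\ref{l:Frechet}, $(\sigma,\varphi)\mapsto{\bf \Phi}(\sigma,\varphi)\in C^{k+\lambda}$ is $C^r$. The same holds for $a_{\sigma,\varphi}(t)=\delta_{\omega_0}\mathbb{G}_{\omega_0}(F_{\sigma,\varphi}(t)^{(2,0)+(0,2)})$ jointly in $(t,\sigma,\varphi)$, since $F_{\sigma,\varphi}(t)={\bf \Phi}(t\sigma,\varphi)$ by \eqref{poisson-scale}. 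Hence the Moser field $V_{\sigma,\varphi}(t)$ of \eqref{Moser} is a $C^r$ family of $C^{k-1}$ vector fields. Smooth dependence of ODE flows on parameters then makes $\Psi_{\sigma,\varphi}(1)\in{\rm Diff}^{k-1}$ depend $C^{r'}$ on parameters, with some loss of derivatives: the composition $\Psi\cdot J_0$ loses one derivative per differentiation in the parameter. Taking $k\ge r+3$ absorbs this loss and gives $\widetilde{\bf S}$ in the class $C^r$ into $C^{k-2-r}$ almost complex structures. This loss-of-derivatives bookkeeping, differentiating $\Psi^*J_0$ in $\sigma$ when $\Psi$ has finite regularity, is the main obstacle, and it is why $k$ must be chosen depending on $r$. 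I would handle it by viewing the map $(\Psi,J)\mapsto\Psi\cdot J$ from ${\rm Diff}^{k-1}\times C^{k-1}$ to $C^{k-2-j}$ as $C^j$.

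\emph{$K$-equivariance and (1).} Equivariance follows because ${\bf \Phi}$ is $K$-equivariant. The map $\sigma\mapsto\varphi_\sigma$ is equivariant by uniqueness in the implicit function theorem, since the projection $\Pi^\perp_{\mathfrak k_{\omega_0}}$ is $K$-invariant. The Moser construction is natural, since $V$ is built from $K$-invariant operators of $\omega_0$. For (1), at $\sigma=0$ we have $\varphi_0=0$, so $V_{0,0}\equiv0$, $\Psi=\mathrm{id}$ and $\widetilde{\bf S}(0)=J_0$.

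\emph{(2) and (3).} For (2), the set $\mathcal{AGK}_{\omega_0}$ of structures tamed by $\omega_0$ is open in $C^0$, and $\widetilde{\bf S}$ is continuous with $\widetilde{\bf S}(0)=J_0$, which is compatible with $\omega_0$. Shrinking $\delta_1$ therefore keeps $\widetilde{\bf S}(\sigma)$ tamed. Integrability of $\widetilde{\bf S}(\sigma)$ is automatic since it is a diffeomorphic image of $J_0$. For (3), let $\sigma\in\Pi\cap U_1$. Then ${\bf S}(\sigma)$ is closed, and the Moser isotopy satisfies $\Psi^*{\bf S}(\sigma)=\omega_0$. Diffeomorphism naturality of $\Gscal$ gives
\[
\Gscal(\omega_0,\widetilde{\bf S}(\sigma))=\Psi^*\Gscal({\bf S}(\sigma),J_0),
\]
up to the sign convention for $\Psi\cdot J_0$. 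This lies in $\Psi^*(\R\oplus\mathfrak k_{{\bf S}(\sigma)})=\R\oplus\mathfrak k_{\omega_0}$ by Lemma~\ref{l:sz}(2) and Definition~\ref{d:vector-space}.
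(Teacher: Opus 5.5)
Your proposal is correct and is essentially the paper's argument written out in more detail. It matches on every point: (1) follows from $\varphi_0=0$, so the Moser field vanishes; (2) follows from continuity and openness of the taming condition; $K$-equivariance follows from naturality, which your uniqueness argument for $\sigma\mapsto\varphi_\sigma$ makes explicit; (3) follows by pulling back $\Gscal({\bf S}(\sigma),J_0)\in\R\oplus\mathfrak k_{{\bf S}(\sigma)}$ along $\Psi_{\sigma,\varphi_\sigma}(1)$; and regularity follows by taking $k$ large relative to $r$, where the paper uses $k\ge r+2$. One small quibble is that, since $J_0$ is smooth, the per-derivative loss really comes from the parameter dependence of the finite-regularity flow $\Psi$, not from the map $\Psi\mapsto\Psi\cdot J_0$; this does not affect the conclusion.
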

\begin{proof} Item (1) is clear by construction, hence item (2) follows by continuity.  By construction, ${\bf S}(\poiss)= {\bf \Phi}(\poiss, \varphi_{\poiss})$ is a $2$-form of regularity $C^{k+\lambda}(X)$, with regularity $C^r$ in $\poiss$. We will assume that $k \geq r+2$. Then, by Definition~\ref{d:tilde-phi}, $\widetilde {\bf S} = \widetilde {\bf \Psi}(\poiss, \varphi_{\poiss})$ will define an integrable almost complex structure $\AGK_{\omega_0}^r$ of regularity $C^r$  with respect to $\poiss$. The $K$-equivariance of $\widetilde{\bf S}$ follows by the naturality of the construction whereas  the last property  follows by the definition of ${\bf S}(\poiss)$.
\end{proof}

Now consider the pullback by $\widetilde{\bf S}$ of the formal symplectic structure $\boldsymbol{\Omega}$ on $\AGK_{\omega_0}$ to $U_1\subset W$  (which we still denote $\boldsymbol{\Omega}$). 
\begin{lemma} Suppose $(X, \omega_0, J_0)$ is a K\"ahler-Einstein Fano manifold.  Then the differential  of $\widetilde{\bf S}$ at $0$ is non-degenerate and complex with respect to the natural complex structure of $U_1 \subset W$ and the formal complex structure ${\bf J}$ on $\AGK_{\omega_0}$. In particular,  $\boldsymbol{\Omega}$ defines a $K$-invariant symplectic structure which tames the natural complex structure on a small neighbourhood  $U_1 \subset W$.
\end{lemma}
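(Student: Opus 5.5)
The plan is to compute $d\widetilde{\bf S}_0$ explicitly using Lemma~\ref{l:Gateaux-diff}, and then to check complex-linearity and injectivity on that formula. Tameness of the pulled-back form will then follow by continuity.

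\emph{Step 1: the differential.} For $\dot\sigma\in W$, set $\dot\varphi := d(\varphi_\sigma)_0(\dot\sigma)$, where $\varphi_\sigma$ is the map of \eqref{eq:phi-sigma}. Since $\widetilde{\bf S}$ is $C^r$, Lemma~\ref{l:Gateaux-diff} (applied with $(t\dot\sigma,t\dot\varphi)$, absorbing the $o(t)$ error in $\varphi_{t\dot\sigma}$ by differentiability) gives
\[
d\widetilde{\bf S}_0(\dot\sigma)=\mathcal{L}_{Y(\dot\sigma)}J_0,
\]
where
\[
Y(\dot\sigma):=\nabla_{\omega_0}\dot\varphi-\tfrac12\big(\delta_{\omega_0}\mathbb{G}_{\omega_0}(\omega_0\dot Q\omega_0)\big)^\sharp,
\qquad \dot Q={\rm Re}(\dot\sigma).
\]
The formal complex structure ${\bf J}$ on $T_{J_0}\AGK_{\omega_0}$ acts by $A\mapsto J_0A$. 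For integrable $J_0$ we have the identity $\mathcal{L}_{J_0V}J_0=J_0\mathcal{L}_VJ_0$. Hence complex-linearity reduces to
\[
Y(\sqrt{-1}\dot\sigma)\equiv J_0Y(\dot\sigma)
\]
modulo real holomorphic vector fields, since these form the kernel of $V\mapsto\mathcal{L}_VJ_0$.

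\emph{Step 2: the Poisson term.} The form $\beta:=\omega_0\dot Q\omega_0$ is of type $(2,0)+(0,2)$, with $\beta^{0,2}$ a constant multiple of $\overline{\omega_0\dot\sigma\omega_0}$. Using the Kähler identities, $\Delta$ commutes with type, and one computes
\[
\delta\mathbb{G}\beta=2{\rm Re}\big(\bar\partial^*\overline{\mathbb G}\beta^{0,2}\big)+2{\rm Re}\big(\partial^*\mathbb{G}\beta^{2,0}\big),
\]
which is a real 1-form whose $(0,1)$-part is $\bar\partial^*\overline{\mathbb G}\beta^{0,2}$. Replacing $\dot\sigma$ by $\sqrt{-1}\dot\sigma$ multiplies $\beta^{0,2}$ by $\pm\sqrt{-1}$, so the 1-form gets rotated by $\mp J_0^*$. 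Thus this piece of $Y$ is complex-linear, up to a fixed sign convention which I will pin down. The remaining issue is the gradient term $\nabla\dot\varphi$: a pure gradient field is never $J_0$ times a gradient modulo holomorphic fields. So I Hodge-decompose the 1-form
\[
a(\dot\sigma):=\tfrac12\delta\mathbb{G}(\beta J_0)=du+d^cv+\gamma,
\]
with $\gamma$ co-closed and satisfying $d^c$-type conditions (no harmonic part, since $b_1=0$ for Fano $X$). I then aim to prove the key identity
\[
\dot\varphi(\dot\sigma)=v(\sqrt{-1}\dot\sigma)\quad\text{modulo }\R\oplus\mathfrak{k}_{\omega_0},
\]
together with $u(\sqrt{-1}\dot\sigma)=-v(\dot\sigma)$.

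\emph{Step 3 (main obstacle): identifying $\dot\varphi$.} By the LeBrun--Simanca equation,
\[
\mathbb{L}_{\omega_0}\dot\varphi=-\Pi^\perp\big(d_\sigma\Gscal(\boldsymbol\Phi(\sigma,0),J_0)\big|_{0}(\dot\sigma)\big).
\]
To compute the right side, I would use the formal momentum-map property: the first variation of $\Gscal$ along $\mathcal{L}_VJ_0$ equals the linearized momentum map. This is $\mathbb{L}$ applied to the Hamiltonian part of $V$, plus a term linear in the gradient part that, in general, involves ${\rm Ric}-\omega_0$. In the Kähler--Einstein case, ${\rm Ric}(\omega_0)=\omega_0$ and the Bochner identity $\Delta_{\bar\partial}=\Delta_{\partial}+$(Ricci term) on $(0,q)$-forms make this extra term vanish. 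It also makes $\bar\partial^*$ on $(0,2)$-forms intertwine with $d^c$ on functions in the needed way, which gives precisely the relation between $u$, $v$ and $\dot\varphi$ stated above. This computation is where I expect the real work: tracking the signs, and showing that the cscK-but-not-KE error term is exactly $({\rm Ric}-\omega_0)$-valued.

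\emph{Step 4: nondegeneracy and tameness.} Suppose $d\widetilde{\bf S}_0(\dot\sigma)=0$. Then $Y(\dot\sigma)$ is real holomorphic, so $a(\dot\sigma)$ is $d^c$ of a Killing potential plus an exact form. Applying $\bar\partial$ and using $\bar\partial\beta^{0,2}=0$ forces $\beta^{0,2}=0$, i.e.\ $\omega_0\dot\sigma\omega_0=0$, and hence $\dot\sigma=0$. So $d\widetilde{\bf S}_0$ is injective and complex-linear. The formal metric $\boldsymbol\Omega(\cdot,{\bf J}\cdot)$ is positive (an $L^2$ norm), so the pulled-back form $\boldsymbol\Omega$ at $0$ is compatible with the complex structure of $W$ and positive definite. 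Since $\widetilde{\bf S}$ is $C^r$ with $r\ge2$, the pulled-back form is $C^{r-1}$. It is closed as a pullback, and $K$-invariant by the $K$-equivariance of $\widetilde{\bf S}$. Positivity of $\boldsymbol\Omega(v,\sqrt{-1}v)$ is an open condition, so after shrinking $U_1$ it holds on all of $U_1$; there $\boldsymbol\Omega$ is nondegenerate and tames the complex structure of $W$.
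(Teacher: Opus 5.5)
\textbf{Overall.} Your skeleton is the paper's. You compute $D_0\widetilde{\bf S}$ from Lemma~\ref{l:Gateaux-diff}, note that the Poisson term $-\tfrac12\mathcal{L}_{\eta^\sharp}J_0$, with $\eta:=\delta\mathbb{G}(\omega_0 Q\omega_0)$, is ${\bf J}$-linear in $\sigma$, kill the $\dot\varphi$-term, and then get injectivity and tameness by openness. Your injectivity argument (apply $\bar\partial$ to the $(0,1)$-part of $\eta$, which recovers a multiple of $\omega_0\sigma\omega_0$) is a valid, slightly more direct alternative to the paper's use of $\delta$ and $\delta^c$.

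\textbf{The gap: Step~3 is misconceived.} This is the only place the Kähler--Einstein hypothesis enters, and you both leave it undone and aim it at the wrong target.
\begin{itemize}
\item[(i)] Your decomposition $a=du+d^cv+\gamma$ is vacuous. The form $a$ (like $\eta$) is co-closed because $\delta^2=0$. It is also $\delta^c$-closed, because $J^*\delta\psi=\delta(\psi J)$ for $(2,0)+(0,2)$-forms $\psi$ and $\mathbb{G}$ preserves type. Hence $\Delta u=\Delta v=0$, so $u,v$ are constants, and the Poisson vector field has no gradient or Hamiltonian component at all.
\item[(ii)] Your ``key identity'' therefore collapses to $\dot\varphi\in\R\oplus\mathfrak{k}_{\omega_0}$, i.e.\ $\mathcal{L}_{\nabla\dot\varphi}J_0=0$. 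This is in fact \emph{equivalent} to complex-linearity: by your Step~2 the Poisson term is complex-linear on its own, and by your own remark a gradient is never $J_0$ times a gradient modulo holomorphic fields.
\item[(iii)] There is no ``intertwining of $\bar\partial^*$ with $d^c$'' to find. Had $v$ been non-constant, your identity would contradict what you need.
\end{itemize}

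\textbf{What is actually missing} is a short computation. Since $\eta^\sharp$ is neither gradient nor Hamiltonian, its contribution to the first variation of $\Gscal$ is a multiple of $\delta\delta\nabla^-\eta$. The Bochner identity, together with $\delta\eta=0$ and constancy of $\Scal(\omega_0)$, gives
\[
\delta\delta\nabla^-\eta=\tfrac12\Delta\delta\eta-\langle d^c\eta,\rho_{\omega_0}\rangle+\tfrac12\langle\eta,d\Scal(\omega_0)\rangle=-\langle d^c\eta,\rho_{\omega_0}\rangle .
\]
If $\rho_{\omega_0}=c\,\omega_0$, this equals $-c\,\Lambda_{\omega_0}d^c\eta=-c\,\delta\eta=0$ by $[\Lambda_{\omega_0},d^c]=\delta$. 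Note that both the Laplacian term and the Ricci term are killed by the co-closedness of $\eta$ (exactly what your putative $du$-component obscures), not by $\rho_{\omega_0}\propto\omega_0$ alone. Feeding this into your linearized LeBrun--Simanca equation gives $\mathbb{L}_{\omega_0}\dot\varphi=0$. Hence $D_0\widetilde{\bf S}(\sigma)=-\tfrac12\mathcal{L}_{\eta^\sharp}J_0$, which is complex by Step~2.

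\textbf{Comparison with the paper.} The paper reaches the same equation $\delta\delta(\nabla^-d\dot\varphi-\tfrac12\nabla^-\eta)=0$ by a slightly different route. It observes that $\left.\frac{d}{dt}\right|_{t=0}\Gscal(\omega_0,\widetilde{\bf S}(t\sigma))$ lies in $\R\oplus\mathfrak{k}_{\omega_0}$. Its $L^2$-norm, computed via the momentum-map formula, pairs $\nabla^-d$ of a Killing potential with $D_0\widetilde{\bf S}(\sigma)$, and so vanishes. Your linearization is an equivalent way to get there.
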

\begin{proof} 
As $W$ is the span of $\Pi$, it is enough to check the statement for the directions generated by $\sigma \in \Pi$. Thus, let $0\neq \sigma \in U_1 \cap \Pi$ and $\varphi_{\sigma}$ denote the corresponding function such that ${\widetilde{\bf S}}(\sigma)=\widetilde{\bf \Phi}(\sigma, \varphi_{\sigma})$. We consider $(t\sigma, \varphi_{t\sigma})$ and denote by $\dot \varphi_{\sigma} = \left. \frac{d}{dt} \right|_{t=0} \varphi_{t\sigma}$. By Lemma~\ref{l:Gateaux-diff}, we have
\begin{equation}\label{S-diff}
\begin{split}
(D_0 \widetilde{\bf S})(\sigma) &= \left. \frac{d}{dt} \right|_{t=0} \widetilde{\bf \Phi}(t\sigma, \varphi_{t\sigma}) 
= \mathcal{L}_{\nabla \dot\varphi_{\sigma}}J -\tfrac{1}{2} \mathcal{L}_{ \left(\delta \mathbb{G}(\omega_0 Q \omega_0) \right)^{\sharp}} J,  \end{split}\end{equation}
where  $Q={\rm Re}(\sigma)$ and $\nabla, \delta, \mathbb{G}$ are the Levi-Civita connection, co-differential and Green operator of $(g_0, \omega_0, J_0)$. We use the notation $\delta$ for the action of $\nabla^*$ on tensors.  We also let $J=J_0$ to lighten the notation.

We first notice that $(D_0 \widetilde{\bf S})(\sigma) \neq 0$ if $\sigma \neq 0$. Indeed, if this was not true, then the $1$-form \[ \frac{1}{2} \delta \mathbb{G} (\omega_0 Q\omega_0) - d\dot \varphi_{\sigma} \] 
would be  the Riemannian dual of a (real) holomorphic vector field.  As this $1$-form is $L^2$-orthogonal to harmonic $1$-forms, it follows that  (see \cite[Ch.2]{gauduchon-book}) 
\[\tfrac{1}{2} \delta \mathbb{G} (\omega_0 Q\omega_0)= dh + d^c f, \]
for some smooth functions  $f, h$. Applying $\delta$ and $\delta^c$ to the above equality (and using that $J^{*} \delta \eta = \delta (\eta J)$ for $\eta$ a $(2,0)+(0,2)$ form), we obtain $\Delta h = \Delta^c f=0$, so $f$ and $h$ must be constant functions and thus
\[  \delta \mathbb{G} (\omega_0 Q\omega_0)=0=\delta \mathbb{G}(\omega_0 JQ \omega_0).\]
It follows that the $\bar \partial$-closed $(0,2)$-form $\omega_0 \sigma \omega_0$ is harmonic and thus zero by the assumption $H^{2,0}(X, \C)=H^{0,2}(X, \C)=0$.  This contradicts our assumption that $\sigma \neq 0$.

Using the momentum map interpretation of $\Gscal$ (see e.g. \cite[Thm.3.8]{ASU} or \cite{Goto-moment})  we get \[\left. \frac{d}{dt} \right|_{t=0} \Gscal(\omega_0, \tilde{\bf S}(t\poiss))= \delta \delta ((D_0\tilde{\bf S})(\poiss))=\delta \delta  \left(J\mathcal{L}_{\nabla \dot \varphi_{\sigma}} J  - \tfrac{1}{2}J\mathcal{L}_{\delta \mathbb{G}(\omega_0 Q \omega_0)}J\right).\]
On the other hand, as $\Gscal(\omega_0, \tilde{\bf S}(t\poiss))\in  \R \oplus \mathfrak{k}_{\omega_0}$, we have $\left. \frac{d}{dt} \right|_{t=0} \Gscal(\tilde{\bf S}(t\poiss)) \in \R \oplus \mathfrak{k}_{\omega_0}$  and thus
\[\int_X \left(\left. \frac{d}{dt} \right|_{t=0} \Gscal(\omega_0, \tilde{\bf S}(t\poiss)) \right)^2\omega_0^{[n]}=\int_X \left \langle g_0^{-1}\nabla^{-} d \left(\frac{d}{dt}_{|_{t=0}}\Gscal(\omega_0,\tilde{\bf S}(t\poiss)) \right), (D_0\tilde{\bf S})(\poiss)\right\rangle_{g_0}=0,  \]
where for any $1$-form $\eta$, $\nabla^{-} \eta$ stands for $(2,0)+(0,2)$ part  with respect to $J_0$ of the $(2,0)$-tensor $\nabla \eta$.
We thus conclude
\[\left. \frac{d}{dt} \right|_{t=0} \Gscal(\omega_0, \tilde{\bf S}(t\poiss))=\delta \delta  \left(J\mathcal{L}_{\nabla \dot \varphi_{\sigma}} J  - \frac{1}{2}J\mathcal{L}_{\delta \mathbb{G}(\omega_0 Q \omega_0)}J\right)=0. \]
Using the identity
\[ \mathcal{L}_X J = - [\nabla X, J] \]
the above can be re-written as
\begin{equation} \label{key} \delta \delta \left( \nabla^{-} d\dot \varphi_{\sigma}  - \tfrac{1}{2} \nabla^{-} \delta \mathbb{G}(\omega_0 Q \omega_0) \right)=0.\end{equation}
By Bochner's identity (see \cite[Lemma~1.23.5]{gauduchon-book}), for any $1$-form $\eta$
\[ \delta \delta \nabla^{-} \eta = \tfrac{1}{2} \Delta \delta \eta  - \langle d^c\eta, \rho_{\omega_0}\rangle + \tfrac{1}{2}\langle\eta, d \Scal(\omega_0)\rangle. \]
Using this identity for $\eta = \delta \mathbb{G}(\omega_0Q\omega_0)$ and the fact that $\Scal(\omega_0)$ is constant, we obtain
\[ \delta \delta \nabla^{-} \delta\mathbb{G}(\omega_0Q \omega_{0}) = -\langle d^c\delta \mathbb{G}(\omega_0Q \omega_0), \rho_{\omega_0}\rangle. \]
In the K\"ahler--Einstein case, using the K\"ahler identity $[\Lambda_{\omega_0}, d^c]=\delta$, we get
\begin{equation}\label{KE-case}
\delta \delta \nabla^{-} \delta\mathbb{G}(\omega_0Q \omega_{0}) = -\left(\frac{\Scal(\omega_0)}{2n}\right) \Lambda_{\omega_0}\left(d^c\delta \mathbb{G}(\omega_0Q \omega_0)\right)=0.
\end{equation}
We then conclude by \eqref{key} and \eqref{KE-case} that $\dot \varphi_{\sigma}$ is  in the kernel of the Lichnerowicz operator $\delta \delta \nabla^{-} d$, i.e. $g_0^{-1}\nabla^{-}d \dot\varphi_{\poiss}=  J\mathcal{L}_{\nabla \dot \varphi_{\sigma} }J=0$.  Substituting back to \eqref{S-diff}, we obtain
\[ (D_0\widetilde{\bf S})(Q - \i J Q) = -\tfrac{1}{2} \mathcal{L}_{\delta \mathbb{G}(\omega_0 Q \omega_0)} J,\]
which is manifestly ${\bf J}$-complex. 
\end{proof}

\begin{lemma}\label{l:induced-moment-map} The action of $K$ on $(U_1, \boldsymbol{\Omega})$ is Hamiltonian with momentum map $\boldsymbol{\mu}(\sigma) \in \mathfrak{k}^*={\rm Lie}(K)^*$ given by
\[ \langle \boldsymbol{\mu}(\sigma), \xi \rangle = - \int_X \mathring{\Goto}(\omega_0,\widetilde{\bf S}(\sigma))  h_{\omega_0}^\xi \omega_0^{[n]}, \]
where $\mathring\Goto(\omega_0, J)$ is the normalized function corresponding to the momentum map for the Hamiltonian action of $\Ham_{\omega_0}$ on $\AGK_{\omega_0}$ defined in \cite{Goto-Math.Ann},  and  $h_{\omega_0}^{\xi} \in \mathfrak{k}_{\omega_0}$ denotes the normalized Hamiltonian of an element $\xi \in \mathfrak{k}$.
In particular, $\boldsymbol{\mu}(\sigma)=0$ for $\sigma \in U_1 \cap \Pi$ if and only if $\Gscal(\omega_0, \widetilde{\bf S}(\sigma))$ is constant and $\sigma \in U_1 \cap \Pi$ is a critical point of $||\boldsymbol{\mu}(\sigma)^{\sharp}||^2:= \int_X \left( \mathring{\Goto}(\omega_0, \tilde{\bf S}(\poiss))\right)^2\omega_0^{[n]}$ if and only if $\Gscal(\omega_0, \widetilde{\bf S}(\sigma))$ is a Killing potential for $(\omega_0, \widetilde{\bf S}(\sigma))$.
\end{lemma}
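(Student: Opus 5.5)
The plan is to obtain the momentum map by pulling back the infinite-dimensional one along the equivariant map $\widetilde{\bf S}$, and then to read off the two characterizations from the LeBrun--Simanca property built into $\widetilde{\bf S}$.

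\emph{Step 1: the pulled-back momentum map.} Recall that $K\subset \Ham(X,\omega_0)$ acts on $(\AGK_{\omega_0},\boldsymbol{\Omega},{\bf J})$ in a Hamiltonian way. Its momentum map is $J\mapsto \big(\xi\mapsto -\int_X \mathring{\Goto}(\omega_0,J)h^\xi_{\omega_0}\omega_0^{[n]}\big)$, as recorded in item (5) of the setup. The map $\widetilde{\bf S}:U_1\to\AGK_{\omega_0}$ is $C^r$ and $K$-equivariant by the corollary following Lemma~\ref{l:sz}. By definition, $\boldsymbol{\Omega}$ on $U_1$ is the pullback $\widetilde{\bf S}^*\boldsymbol{\Omega}$, which is closed and, by the preceding lemma, nondegenerate near $0$. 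The standard fact applies: if $\Phi:(P,\Phi^*\Omega)\to(N,\Omega)$ is equivariant and $\mu_N$ is a momentum map on $N$, then $\mu_N\circ\Phi$ is a momentum map on $P$. To check this, differentiate $\langle\mu_N\circ\widetilde{\bf S},\xi\rangle$ in a direction $v$ and use the chain rule. The momentum map identity on $\AGK_{\omega_0}$ gives $-\boldsymbol{\Omega}(V_\xi^{\AGK},D\widetilde{\bf S}(v))$. Equivariance gives $V^{\AGK}_\xi\circ\widetilde{\bf S}=D\widetilde{\bf S}(V_\xi)$, so this equals $-(\widetilde{\bf S}^*\boldsymbol{\Omega})(V_\xi,v)$. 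Equivariance of the momentum map itself follows from the equivariance of $\mu_{\AGK}$ and of $\widetilde{\bf S}$. One technical point is that the formal momentum map identity is only established for smooth $J$, while $\widetilde{\bf S}(\sigma)$ has finite regularity $C^{k-2}$. Since both sides are continuous in $C^{k-2}$ for $k$ large, I would extend the identity by density.

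\emph{Step 2: the vanishing criterion.} Take $\sigma\in U_1\cap\Pi$. Then $\widetilde{\bf S}(\sigma)$ is an integrable GK structure, so $\Goto(\omega_0,\widetilde{\bf S}(\sigma))=\Gscal(\omega_0,\widetilde{\bf S}(\sigma))$. By item (3) of the corollary, this function lies in $\R\oplus\mathfrak{k}_{\omega_0}$, so $\mathring{\Goto}\in\mathfrak{k}_{\omega_0}$. Because of the pairing formula, $\boldsymbol{\mu}(\sigma)=0$ says exactly that $\mathring\Goto$ is $L^2(\omega_0)$-orthogonal to $\mathfrak{k}_{\omega_0}$. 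Combined with $\mathring\Goto\in\mathfrak{k}_{\omega_0}$, this forces $\mathring\Goto=0$, i.e.\ $\Gscal$ is constant. The converse direction is immediate.

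\emph{Step 3: the critical-point criterion.} Identify $\mathfrak{k}^*\cong\mathfrak{k}\cong\mathfrak{k}_{\omega_0}$ using the $L^2$ inner product on normalized Killing potentials, taken as the ad-invariant product of Section~\ref{a:GIT}. Under this identification $\boldsymbol{\mu}(\sigma)^\sharp$ corresponds to $-\mathring\Goto$, which is legitimate because $\mathring\Goto\in\mathfrak{k}_{\omega_0}$; hence $\|\boldsymbol{\mu}^\sharp\|^2$ equals the stated integral.

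The general computation from Section~\ref{a:GIT} gives $d\|\boldsymbol{\mu}\|^2=-2\,\boldsymbol{\Omega}(V_{\boldsymbol{\mu}^\sharp},\cdot)$. Since $\boldsymbol{\Omega}$ is nondegenerate, criticality is equivalent to $V_{\boldsymbol{\mu}(\sigma)^\sharp}(\sigma)=0$, i.e.\ $\boldsymbol{\mu}(\sigma)^\sharp$ lies in the stabilizer of $\sigma$. Equivariance of $\widetilde{\bf S}$ turns this into the statement that the vector field $\xi$ with potential $h^\xi_{\omega_0}=-\mathring\Goto$ preserves $\widetilde{\bf S}(\sigma)$. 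Conversely, suppose $\xi$ preserves $\widetilde{\bf S}(\sigma)$. Then $\xi$ fixes $\widetilde{\bf S}(\sigma)$ and $\omega_0$, hence fixes the whole pair $(\omega_0,\widetilde{\bf S}(\sigma))$.

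\emph{Main obstacle.} The delicate step is this converse. Fixing $\widetilde{\bf S}(\sigma)$ in $\AGK_{\omega_0}$ is weaker than fixing $\sigma$ in $W$, because $D\widetilde{\bf S}$ is injective only at (and hence near) $0$. I would deduce $V_\xi(\sigma)=0$ from local injectivity of the equivariant immersion $\widetilde{\bf S}$ on the small ball $U_1$. With that in hand, "Killing potential for $(\omega_0,\widetilde{\bf S}(\sigma))$" means precisely that $\mathring\Goto$ is the $\omega_0$-Hamiltonian of an infinitesimal automorphism of the GK pair, which matches the reformulation above.
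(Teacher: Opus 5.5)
Your proposal is correct and follows the paper's route: the paper's one-line proof also pulls back the $\Ham_{\omega_0}$-momentum map on $\AGK_{\omega_0}$ along the $K$-equivariant map $\widetilde{\bf S}$, and uses that $\mathring{\Goto}(\omega_0,\widetilde{\bf S}(\sigma))=\mathring{\Gscal}(\omega_0,\widetilde{\bf S}(\sigma))\in\mathfrak{k}_{\omega_0}$ for $\sigma\in\Pi$; your chain-rule check, orthogonality argument and injectivity argument for the converse spell out what the paper leaves implicit. Two small remarks: the integral $\int_X(\mathring{\Goto})^2\omega_0^{[n]}$ equals the true $\|\boldsymbol{\mu}^{\sharp}\|^2$ only on $\Pi$, but their difference is a nonnegative $C^1$ function vanishing on $\Pi$, so the two functions have the same critical points there; and injectivity of $D\widetilde{\bf S}$ needs no separate shrinking, since it holds wherever $\widetilde{\bf S}^*\boldsymbol{\Omega}$ tames the complex structure of $W$.
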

\begin{proof} Uses the momentum map property of $\mathring{\Goto}(\omega_0, J)$ and the fact (established in \cite{ASU})  that for $\sigma \in \Pi$, we have
$\mathring{\Goto}(\omega_0, \tilde{\bf S}(\poiss))=\mathring{\Gscal}(\omega_0, \widetilde{\bf S}(\poiss)) \in \mathfrak{k}_{\omega_0}$.
\end{proof}

\subsection{Proof of Theorem~\ref{thm:main}}

\begin{proof}[Proof of Theorem~\ref{thm:main}] The first part follows by Proposition~\ref{p:Kahler/Poisson-K-polystable}, noting that the K-polystability of $X$ is equivalent to the existence of a K\"ahler-Einstein metric in $2\pi c_1(X)$ by the Yau-Tian-Donaldson correspondence, and hence $\Aut_{\rm red}(X)=\Aut_{\circ}(X)$ is reductive by the Matsushima theorem.

For the second part, we assume that $\poiss\neq 0$ is a polystable Poisson structure and want to show  that there exists a smooth cscGK metric in $\GK_{\lambda\poiss, 2\pi c_1(X)}$ for $|\lambda|$ small enough, again using that 
$X$ admits a K\"ahler-Einstein metric $\omega_0 \in 2 \pi c_1(X)$.
Lemma \ref{l:induced-moment-map} defines a tamed moment map setup on $(U_1, \boldsymbol{\Omega}, K)$.  We further restrict to the open ball $U_0$ guaranteed by Proposition~\ref{p:local-tamed-GIT}.
Multiplying $\poiss$ by $\lambda\neq 0$, we get $\lambda\poiss \in U_0$ (which is also polystable).  We drop the factor of $\lambda$ and simply now consider the problem for polystable $\poiss \in U_0$.  By Proposition~\ref{p:local-tamed-GIT},  there exists a Poisson structure $\sigma_{\infty} \in K^{\C}\cdot \sigma \cap U_0$ such that   $\mu^K_{\omega}(\sigma_{\infty})^{\sharp}$ is in the Lie algebra of the stabilizer of $\sigma_{\infty}$ in $K$. By Lemma~\ref{l:induced-moment-map}, this is equivalent to $\boldsymbol{\mu}(\tilde {\bf S}(\poiss_{\infty}))^{\sharp} = -(\mathring{\Gscal} (\omega_0, \tilde{\bf S}(\sigma_{\infty}))\in \mathfrak{k}_{\omega_0}$  being a Hamiltonian potential of a vector field which preserves the GK structure $(\omega_0, \tilde{\bf S}(\sigma_{\infty})) \in \AGK^r_{\omega_0}$, i.e. we get an extremal GK structure in the sense of \cite{ASU}.  Denote by $\T_{\rm ext}<K$ the torus generated by the Killing potential $\mathring{\Gscal} (\omega_0, \tilde{\bf S}(\sigma_{\infty}))$, and observe that $\tilde{\bf S}(0)=J_0$ and $\tilde{\bf S}(\sigma_{\infty})$ are both $\T_{\rm ext}$-invariant.  It is shown in \cite{ASU} that the $L^2(X, \omega_0)$-projection of $\mathring{\Gscal}( \omega_0,J)$ to the finite dimensional space of $\omega_0$-Hamiltonian functions  of the elements  of ${\rm Lie}(\T_{\rm ext})$  is independent of the choice of $\T_{\rm ext}$-invariant GK structure $(J, \omega_0)$ in $\mathcal{AGK}^r_{\omega_0}$: it thus follows that both $(\omega_0, \tilde{\bf S}(0))$ and $(\omega_0, \tilde {\bf S}(\sigma_{\infty}))$ have constant generalized scalar curvature.  The cscGK structure $(\omega_0, \tilde{\bf S}(\poiss_{\infty})) \in \AGK^r_{\omega_0}$ is of regularity $C^r$, hence we apply Theorem~\ref{t:schauder} to obtain that it is actually smooth.  We have thus found a smooth cscGK metric ${\bf S}(\poiss_{\infty}) \in \GK_{\poiss_{\infty}, 2\pi c_1(X)}$ on $X$ (see Lemma~\ref{l:sz}). As $\sigma_{\infty} \in K^{\C} \cdot \sigma$, 
pulling everything by  an element of $K^{\C}$ produces a smooth cscGK metric in $\GK_{\poiss, 2\pi c_1(X)}$.
\end{proof}


\providecommand{\bysame}{\leavevmode\hbox to3em{\hrulefill}\thinspace}
\providecommand{\MR}{\relax\ifhmode\unskip\space\fi MR }
\providecommand{\MRhref}[2]{%
  \href{http://www.ams.org/mathscinet-getitem?mr=#1}{#2}
}
\providecommand{\href}[2]{#2}

\end{document}